    \newtheorem{theorem}{Theorem}[section]
\newtheorem{lemma}{Lemma}[section]
    \newtheorem{remark}{Remark}[section]
\newcommand\widecheck[1]{%
\savestack{\tmpbox}{\stretchto{%
  \scaleto{%
    \scalerel*[\widthof{\ensuremath{#1}}]{\kern-.6pt\bigwedge\kern-.6pt}%
    {\rule[-\textheight/2]{1ex}{\textheight}}
  }{\textheight}%
}{0.5ex}}%
\stackon[1pt]{#1}{\scalebox{-1}{\tmpbox}}%
}
\newcommand{\RNum}[1]{\uppercase\expandafter{\romannumeral #1\relax}}
\DeclarePairedDelimiter{\norm}{\lVert}{\rVert}
\NewDocumentCommand{\normL}{ s O{} m }{%
  \IfBooleanTF{#1}{\norm*{#3}}{\norm[#2]{#3}}_{L_2(\Omega)}%
}
\newcommand{\vect}[1]{\bf #1}
\title{Multi-symplectic discontinuous Galerkin methods for the stochastic Maxwell equations with additive noise}
\author{	Jiawei Sun\footnote{Department of Mathematics, The Ohio State University,
		Columbus, OH 43210, USA. E-mail: sun.2261@buckeyemail.osu.edu.} \and
		Chi-Wang Shu\footnote{Division of Applied Mathematics, Brown University, Providence, RI 02912, USA. 
E-Mail: chi-wang\_shu@brown.edu. The work of this author is partially supported by NSF grant DMS-2010107
and AFOSR grant FA9550-20-1-0055.} 
		\and Yulong Xing\footnote{Department of Mathematics, The Ohio State University,
		Columbus, OH 43210, USA. E-mail: xing.205@osu.edu. The work of this author is partially supported by the NSF grant DMS-1753581.}}
\date{}
\begin{document}
\maketitle
\begin{abstract} 
One- and multi-dimensional stochastic Maxwell equations with additive noise are considered in this paper. 
It is known that such system can be written in the multi-symplectic structure, and the stochastic energy increases linearly in time. 
High order discontinuous Galerkin methods are designed for the stochastic 
Maxwell equations with additive noise, and we show that the proposed methods satisfy the discrete form of the 
stochastic energy linear growth property and preserve the multi-symplectic structure on the discrete level.
 Optimal error estimate of the semi-discrete DG method is also analyzed. The fully discrete methods are obtained 
by coupling with symplectic temporal discretizations. One- and two-dimensional numerical results are provided to demonstrate the performance of the proposed methods, and optimal error estimates and linear growth of the discrete energy can be observed for all cases.  
\end{abstract}

\smallskip
	\textbf{Key words:} Discontinuous Galerkin methods, Stochastic Maxwell equations, Additive noise, Multi-symplectic method, Optimal error estimate.

\section{Introduction}
\setcounter{equation}{0} \setcounter{figure}{0}\setcounter{table}{0}

In this paper we develop and analyze high order discontinuous Galerkin (DG) methods for one- and two-dimensional 
stochastic Maxwell equations with additive noise. Maxwell equations play an important role in many physical applications, 
and have been widely used in electromagnetism, electronic biology, optical imaging, etc. The general formulation of 
Maxwell equations is 
\begin{equation}\label{Max}
  \begin{cases}   
     \partial_t\vect{D}=\nabla\times\vect{H}-{\vect{J}}_e,~~ \nabla\cdot\vect{D}=\rho,\\
    \partial_t\vect{B}=-\nabla\times\vect{E},~~\nabla\cdot  \vect{B} = 0,
  \end{cases}
\end{equation}
where $\bf{H}$ represents the magnetic field, $\bf{E}$ stands for the electric 
field, $\bf{D}$ and $\bf{B}$ are the electric and magnetic flux density 
respectively.  $\bf{J}_e$ is the electric current density, and $\rho$ is the 
electric charge density. 

Stochastic Maxwell equations are the generalized version of the deterministic Maxwell equations, which are often described as a random perturbation of the electric current density or the magnet current density by noise. The noises are commonly regarded as Brownian motion, Poisson process, etc. In \cite{Rytov}, Rytov et al. introduced fluctuations of an electromagnetic field to obtain stochastic  Maxwell equations. Ord showed in \cite{ORD} that the random walk model due to Mark Kac can be modified to produce Maxwell's field equations in 1+1 dimensions. In \cite{BL2010},
 Liaskos et al. studied the stochastic integrodifferential equations in Hilbert spaces, and they examined the well posedness for the 
 Cauchy problem of the integrodifferential equations describing Maxwell equations.  The random electromagnetic fields using the spectral representation 
 is explored in \cite{GL 2015}, and the electromagnetic fields were coupled by Maxwell equations with a random source term.
  Finite element approximations of a class of nonlinear stochastic wave equations with multiplicative noise were recently investigated in \cite{LWX2021}. 
  The semilinear stochastic Maxwell equations with additive noise in the following form: 
\begin{align}\label{MUL}
       \begin{cases}
   \epsilon d\vect{E} -\nabla\times 
    \vect{H}dt=-\vect{J}_e(t,\vect{x},\vect{E},\vect{H})dt-\vect{J}_e^r(t,\vect{x})\circ dW, \\
    \mu d\vect{H}+\nabla\times\vect{E}dt=-\vect{J}_m(t,\vect{x},\vect{E},\vect{H})dt-\vect{J}_m^r(t,\vect{x})\circ 
    dW,
  \end{cases}
\end{align}
were studied by Chen et al. in \cite{CC 2016}, where $dW$ is a space-time mixed color noise, often driven by Brownian motion, and $\vect{J}_e$ and $\vect{J}_m$ are described as electric current and magnetic current, along with continuous bounded functions. Theoretical properties of the stochastic system \eqref{MUL} such as regularity, energy and divergence evolution law, and symplecticity have been presented in that paper. In addition, a stochastic Runge-Kutta semidiscretization scheme was proposed for \eqref{MUL} and proven to possess first order of mean accuracy. 
In \cite{CCHS}, David et al. developed an exponential integrator for a more generalized formulation of \eqref{MUL}, when $\bf{J}_e^r$ and $\bf{J}_m^r$ depend on $\bm{E}$ and $\bm{H}$. In a recent review article, Zhang et al. \cite{LZ2019} presented different types of stochastic Maxwell equations with additive or multiplicative noises. 

Stochastic Maxwell equations can be viewed as a type of stochastic Hamiltonian PDEs. In \cite{SJ2012}, Jiang et al. considered stochastic Hamiltonian PDEs in the form
\begin{equation}\label{defSPDE}
  Mdz+Kz_xdt=\nabla_zS_1(z)dt+\nabla_zS_2(z)dW_t,
\end{equation}
where $M$ and $K$ are anti-symmetric matrices, and $S_1$ and $S_2$ are smooth functions of $z$.  
It can be shown that the system \eqref{defSPDE} satisfies the following stochastic multi-symplectic conservation law:
\[d\omega+\partial_x \kappa dt=0,\qquad \omega=MU\cdot V,\qquad \kappa=KU\cdot V,\]
which $U$ and $V$ are a pair of solutions to the variational equation
\[Md(\partial z)+K(\partial z)_x dt=\nabla^2_zS_1(z)\partial zdt+\nabla^2_zS_2(z)\partial zdW_t.\]
In \cite{CHZ2016,HJZ2014}, multi-symplectic finite difference methods have been studied for the stochastic Maxwell equations with additive noise of the form 
 \begin{align}\label{ADD}
     \begin{cases}
          \epsilon d\vect{E} =\nabla \times \vect{H}dt- \lambda_1\vect{1}^T dW,\\
     \mu d\vect{H}=-\nabla\times\vect{E}+\lambda_2\vect{1}^TdW,
     \end{cases}
 \end{align}
which can be reformulated in two slightly different formulations of \eqref{defSPDE} in \cite{CHZ2016,HJZ2014} with different set of auxiliary variables introduced. Numerical methods based on these reformation have been presented and studied, and it was shown that these methods preserve stochastic multi-symplecticity on the discrete level. In addition, the linear growth property of stochastic energy was also preserved by the proposed methods. In \cite{HJ2017}, the extension to stochastic Maxwell equations with multiplicative noise was investigated.

In this work, we investigate the high-order schemes for stochastic Maxwell equations, following the recent work in \cite{YL 2020} on DG methods for stochastic conservation laws. The DG method is a class of finite element methods that uses discontinuous piecewise polynomials as the basis functions. This method has been shown to adopt many advantages from both finite element and finite volume methods, which includes hp-adaptivity flexibility, efficient parallel implementation, the ability of handling complicated boundary conditions, etc. The DG methods were first introduced in \cite{RH1973} by Reed and Hill to solve transport equations, and later they were extended to solve hyperbolic conservation laws by Cockburn et al. in \cite{CHS1990,  CKS2000,  CLS1989, CS1989}. 
There have been some recent studies in extending DG method for stochastic partial differential equations. In \cite{YL 2020} Li et al. applied the DG method to the stochastic conservation laws with multiplicative noise
 \[du+f(u)_xdt=g(x,t,u)dW_t. \]
In \cite{YuL 2020}, they also proposed an ultra-weak DG method for the stochastic Korteweg-De Vries equations in the form 
 \[du=-(u_{xxx}+f(u)_x)dt+g(x,t,u)dW_t.\]
Optimal error estimate was proven for the semilinear equations, and numerically optimal convergence rate was also observed for many nonlinear cases. 

In this paper we apply high order DG methods to stochastic Maxwell equations with additive noise \eqref{ADD} in 
one and two dimensions. The stochastic energy of the exact solutions are shown to satisfy the linear growth property, and we will demonstrate that the numerical solutions of the semi-discrete DG methods satisfy the similar energy law on the discrete level. When the standard Brownian motion $W_t$ is considered, the exactly same discrete energy law can be obtained. Following the error estimate for the deterministic Maxwell equations, we provide the optimal error estimate of the semi-discrete DG methods for the one- and two-dimensional stochastic Maxwell equations on cartesian meshes.  Furthermore, multi-symplectic property of certain DG methods for the one-dimensional deterministic multi-symplectic Hamiltonian partial differential equations (HPDEs) of the form
 \[Mz_t+Kz_x=\nabla_z S(z)\]  
has been recently investigated in \cite{SX2020}. 
Following the idea, we will start by establishing the multi-symplectic structure of the stochastic Maxwell equations, and then prove that the DG methods with suitable numerical fluxes can preserve the multi-symplecticity. The resulting semi-discrete methods are combined with symplectic temporal discretization. Both first order symplectic Euler method and second order symplectic partitioned Runge-Kutta (PRK) method will be introduced and analyzed in this paper. 

The structure of this paper is as follows.  Section \ref{msltc1d} provides 
theoretical results for one-dimensional stochastic Maxwell equations. We discuss the 
conservation of multi-symplecticity of our DG scheme, demonstrate the energy 
law of numerical solutions, and present the result on the optimal error estimate of 
the proposed methods.  Section \ref{2dmtsplctc} provides the same theoretical 
results on DG methods for two-dimensional stochastic Maxwell equations. Temporal  
discretization is provided in section \ref{time}. We consider both symplectic 
Euler method and the second order symplectic PRK method. Numerical 
results in both one and two dimensions are provided in section \ref{numerical} to validate the convergence rate and the linear energy growth property. Section \ref{conclusion} contains some conclusion remarks.

Throughout this paper, $L^2$ norm is denoted by $\norm{\cdot}$, and $C$ represents a generic 
positive constant independent of the spatial and temporal step size $h$ and $\Delta t$, 
which can take different values in different cases. In general, $W_t$ represents a 
Q-Wiener process defined on a given probability space 
$(\Omega,\mathcal{F},\mathbb{P})$, which can be characterized as 
\begin{equation}\label{defW}
  W_t=W(t,\vect{x},\omega)=\sum^\infty_{m=1}\sqrt{\gamma_m}e_m(\vect{x})\mathcal{B}_m(t,\omega), 
  t\ge 0, 
  \vect{x}\in \mathbb{R}~ \text{or}~ \mathbb{R}^2,
\end{equation}
where $\{e_m\}$ is an orthonormal basis of $L^2(\mathbb{D})$, with $\mathbb{D}\subset \mathbb{R}^d,~d=1,2$. $Q$ is a symmetric, non-negative operator such that $Tr(Q)<\infty$ and $Qe_m=\gamma_m e_m$. Furthermore, $\{\mathcal{B}_m\}$ is a sequence of independent standard Brownian motions.

 \section{One-dimensional Maxwell equation with additive noise}\label{msltc1d}
 \setcounter{equation}{0} \setcounter{figure}{0}\setcounter{table}{0}
 
In this section, the one-dimensional computational domain is denoted by $I$, which is partitioned into subintervals $I_j=[x_{j-\frac12},x_{j+\frac12}]$ 
where $j=1,2,\cdots,N$. We also denote $x_j=\frac{1}{2}(x_{j-\frac12}+x_{j+\frac12})$ to be the center of each cell, and $h_j=x_{j+\frac12}-x_{j-\frac12}$ to be the mesh size. Let $h=\max_j h_j$ be the maximum mesh size. We further assume that $h/h_j$ 
is bounded over all $j$.  Assume $P^k(I_j)$ to be the space of polynomials of degree up to $k$ on $I_j$, and the piecewise polynomial space $V_h^k$ is defined as follows:
\[V_h^k=\{v:v|_{I_j}\in P^k(I_j),~~ j=1,2,\cdots,N\}.\]
Note that functions in $V_h^k$ can be discontinuous at cell interfaces. Let $v_{j+\frac12}^+$ and $v_{j+\frac12}^-$ be the right and left limit of $v$ at the interface $x_{j+\frac12}$, and we denote by $\{ v\}_{j+\frac12} = \frac12(v_{j+\frac12}^+ + v_{j+\frac12}^-)$ and $[v]_{j+\frac12} = v_{j+\frac12}^+ - v_{j+\frac12}^-$ the average and jump of $v$ at $x_{j+\frac12}$.  

We start by considering the one-dimensional (1D) stochastic Maxwell equation with additive noise of the form
  \begin{equation}\label{1}
  \begin{cases}
d\eta=-u_xdt-\lambda_1dW_t, &\\
du=-\eta_xdt+\lambda_2dW_t,
\end{cases}
\end{equation}
which can be viewed as a 1D version of the model \eqref{ADD}.
The DG scheme for \eqref{1} is formulated as:  for $x\in I,~ (\omega,t)\in \Omega\times [0,T]$, find $\eta_h(\omega,x,t), u_h(\omega,x,t) \in V^k_h$, such that for any test functions $ \varphi, \widetilde{\varphi}\in V_h^k$, it holds that 
 \begin{eqnarray}
  \int_{I_j}d\eta_h\varphi(x)dx=\Big(\int_{I_j}u_h\varphi_xdx 
  -(\widecheck{u}_h\varphi^-)_{j+\frac12}+(\widecheck{u}_h\varphi^+)_{j-\frac12}\Big)dt
  -\int_{I_j}\lambda_1\varphi dW_tdx,   \label{phi}\\ \, \notag \\
    \int_{I_j}du_h\widetilde{\varphi}(x)dx=\Big(\int_{I_j}\eta_h\widetilde{\varphi}_xdx 
  -(\widecheck{\eta}_h\widetilde{\varphi}^-)_{j+\frac12}+(\widecheck{\eta}_h\widetilde{\varphi}^+)_{j-\frac12}\Big)dt
  +\int_{I_j}\lambda_2\widetilde{\varphi} dW_tdx ,
  \label{phii}
  \end{eqnarray}
where the generalized alternating numerical fluxes are chosen to be  
$$\widecheck{u}_h=\{u_h\}+\alpha[u_h], \qquad \widecheck{\eta}_h=\{\eta_h\}-\alpha[\eta_h],$$ 
for some non-zero constant $\alpha\in[-1,1]$. 
Below, we will explore some theoretical properties of this DG method, including the discrete energy law, optimal error estimate and the multi-symplectic structure.

\subsection{Discrete energy law}
The exact solutions of the three dimensional stochastic Maxwell equation \eqref{ADD} satisfy the linear energy growth property, as studied in \cite{HJZ2014}. Below, we start by presenting similar result for the one-dimensional model \eqref{1}.
%

\begin{theorem}[\bf{Continuous energy law}]\label{1denergypdelevel}
  Let $u$ and $\eta$ be the solutions to the model \eqref{1} under periodic boundary condition. Denote the energy by $\mathcal{E}(t)=\int_I u^2(x,t)+\eta^2(x,t)dx$, then for any $t$, the global stochastic energy satisfies the following energy law
\begin{eqnarray}\label{thm2.9}
     \mathcal{E}(t) = \mathcal{E}(0)  +2 \int_0^t\int_{I}(\lambda_2\eta-\lambda_1 u) dW_tdx
+(\lambda_1^2+\lambda_2^2)Tr(Q) t.
  \end{eqnarray}
and, after taking the expectation,
  \begin{eqnarray}\label{thm2.10}
    \mathbb{E}\big(\mathcal{E}(t)\big)=\mathcal{E}(0)+(\lambda_1^2+\lambda_2^2)Tr(Q)t.
  \end{eqnarray}
\end{theorem}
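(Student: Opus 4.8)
The plan is to apply It\^o's formula to the $L^2$ functional $\mathcal{E}(t)=\int_I (u^2+\eta^2)\,dx$, using the series representation \eqref{defW} of the noise, $dW_t=\sum_{m\ge1}\sqrt{\gamma_m}\,e_m(x)\,d\mathcal{B}_m(t)$. First I would work pointwise in $x$: the system \eqref{1} becomes a pair of scalar It\^o equations driven by the independent standard Brownian motions $\mathcal{B}_m$, so It\^o's formula gives $d(\eta^2)=2\eta\,d\eta+(d\eta)^2$ and $d(u^2)=2u\,du+(du)^2$. Since $d\mathcal{B}_m\,d\mathcal{B}_n=\delta_{mn}\,dt$, the quadratic variations are $(d\eta)^2=\lambda_1^2\big(\sum_m\gamma_m e_m(x)^2\big)dt$ and $(du)^2=\lambda_2^2\big(\sum_m\gamma_m e_m(x)^2\big)dt$.

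Next I would integrate the two pointwise identities over $I$ and add them. Substituting $d\eta=-u_x\,dt-\lambda_1\,dW_t$ and $du=-\eta_x\,dt+\lambda_2\,dW_t$, the drift part becomes $-2\int_I(\eta u_x+u\eta_x)\,dx\,dt=-2\int_I\partial_x(u\eta)\,dx\,dt$, which vanishes under the periodic boundary condition. The stochastic parts combine into the martingale $2\int_I(\lambda_2 u-\lambda_1\eta)\,dW_t\,dx$, and the It\^o correction terms become $(\lambda_1^2+\lambda_2^2)\big(\int_I\sum_m\gamma_m e_m(x)^2\,dx\big)dt=(\lambda_1^2+\lambda_2^2)\,Tr(Q)\,dt$, where I use the orthonormality $\int_I e_m^2\,dx=1$ (taking $\mathbb{D}=I$) together with $\sum_m\gamma_m=Tr(Q)$. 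Integrating in time over $[0,t]$ then yields the identity \eqref{thm2.9}.

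For \eqref{thm2.10} I would simply take expectations in \eqref{thm2.9}: provided $u,\eta\in L^2(\Omega\times I\times[0,t])$, the stochastic integral $\int_0^t\int_I(\lambda_2 u-\lambda_1\eta)\,dW_t\,dx$ is a martingale starting at $0$ and hence has zero expectation, so only the deterministic terms $\mathcal{E}(0)+(\lambda_1^2+\lambda_2^2)Tr(Q)t$ survive.

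The main obstacle I anticipate is the rigorous justification of It\^o's formula in this infinite-dimensional, $Q$-Wiener setting: one needs enough spatial regularity of the solution for the quantities $\int_I\eta u_x\,dx$ and $\int_I u\eta_x\,dx$ to be well defined and for the termwise handling of the infinite series in the quadratic variation to be legitimate, and one must track the spatial integration of $(dW_t)^2$ carefully so that it collapses to exactly $Tr(Q)\,dt$. Once those points are secured, the remaining steps---the cancellation of the skew-adjoint spatial operator by periodicity and the assembly of the three contributions---are routine.
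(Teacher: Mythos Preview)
Your argument is correct and follows essentially the same route as the paper: apply It\^o's formula to $u^2+\eta^2$, integrate in space, use periodicity to kill the drift $\int_I\partial_x(u\eta)\,dx$, identify the It\^o correction with $(\lambda_1^2+\lambda_2^2)Tr(Q)\,dt$, integrate in time, and take expectations using the martingale property. Note that both you and the paper's own derivation obtain the martingale term $2\int_I(\lambda_2 u-\lambda_1\eta)\,dW_t\,dx$, whereas the theorem statement \eqref{thm2.9} has the roles of $u$ and $\eta$ swapped---this appears to be a typographical slip in the statement rather than in your reasoning.
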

\begin{proof}
  By utilizing the It\^{o}'s lemma and the equations \eqref{1}, we have 
  \begin{align}
          &d\mathcal{E}(t)=\int_I \big(2udu+ (du)^2+2\eta d\eta +(d\eta)^2\big) dx \notag\\
    &~~~~~~~=-\int_I (2u\eta_x+2\eta u_x)dtdx+2\int_I (\lambda_2u-\lambda_1 \eta) dW_tdx+(\lambda_1^2+\lambda_2^2)\int_I (dW_t)^2 dx.	\label{1denergyeq1}
  \end{align}
It follows from the periodic boundary condition and the definition of $W_t$ that
  \[\int_I (2u\eta_x+2\eta u_x)dtdx=0, \qquad\qquad \int_I (dW_t)^2 dx=Tr(Q)dt.\]
  Therefore, \eqref{1denergyeq1} reduces to
  \begin{eqnarray}\label{beforeexp}
    d\mathcal{E}(t)=2\int_I (\lambda_2u-\lambda_1 \eta)dW_tdx+(\lambda_1^2+\lambda_2^2)Tr(Q)dt.
  \end{eqnarray}
  Integrating \eqref{beforeexp} over $t$ yields \eqref{thm2.9}, and taking an expectation leads to \eqref{thm2.10}.
\end{proof}

Next, we show that the following discrete energy law is satisfied by the numerical solutions of the DG methods.
\begin{theorem}[\bf{Discrete energy law}]\label{1denergy}
Let $u_h(\omega,x,t)$ and $\eta_h(\omega,x,t)$ be the numerical solutions to the DG methods \eqref{phi} and \eqref{phii}. \\
(a): For any $t\in[0,T]$, the numerical solutions satisfy the discrete energy law
  \begin{align}
  &\norm{u_h(\omega,x,t)}^2+\norm{\eta_h(\omega,x,t)}^2 \notag\\
  &\hskip2cm
  =2\int^t_0\int_I(\lambda_2u_h-\lambda_1\eta_h)dW_sdx
+\norm{u_h(x,0)}^2+\norm{\eta_h(x,0)}^2+(\lambda_1^2+\lambda_2^2)Kt. \label{1denergy11}
\end{align}
and
  \begin{equation}
  \mathbb{E}\Big(\norm{u_h(\omega,x,t)}^2+\norm{\eta_h(\omega,x,t)}^2\Big)=\norm{u_h(x,0)}^2+\norm{\eta_h(x,0)}^2+(\lambda_1^2+\lambda_2^2)Kt, \label{1denergy11_1}
  \end{equation}
with 
  \begin{equation}
  K=\sum_{j=1}^N\sum_{l=0}^k \mu_j^l\sum^\infty_{m=1} \Big(\int_{I_j}\phi_j^l\sqrt{\gamma_m}e_mdx\Big)^2, \label{added2}
  \end{equation}
where  $\{\phi^l_j, j=0,\cdots,k\}$ represents the set of Legendre basis over cell $I_j$, and $\mu_j^l = (\int_{I_j}(\phi_j^l)^2dx)^{-1}$. \\
(b) The constant $K$ is bounded by $(k+1)Tr(Q)$ with $k$ being the polynomial degree of the DG methods. 
Moreover, if there exists some constant $\alpha>0$ such that the series $\sum^\infty_{m=1}\gamma_m (K_m)^\alpha<\infty$ with $K_m=\norm{(e_m)_x}_{{\infty}}$, 
we can show that $K=Tr(Q)+O(h^\alpha)$, i.e., $K$ is an approximation of $Tr(Q)$ appearing in the continuous energy law \eqref{thm2.9}-\eqref{thm2.10}.
  
\end{theorem}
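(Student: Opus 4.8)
\emph{Part (a).} The plan is to imitate the It\^o computation of Theorem~\ref{1denergypdelevel} at the discrete level. I would take $\varphi=\eta_h$ in \eqref{phi} and $\widetilde{\varphi}=u_h$ in \eqref{phii}, apply It\^o's lemma cellwise to $u_h^2$ and $\eta_h^2$, integrate over $I_j$, sum over $j$, and add the two resulting identities. The $dt$-part then consists of the volume contributions $\sum_j\int_{I_j}\big(u_h(\eta_h)_x+\eta_h(u_h)_x\big)dx=\sum_j\int_{I_j}(u_h\eta_h)_x\,dx$ together with the interface flux terms; regrouping interface by interface, at each $x_{j+\frac12}$ what survives is $\big(u_h^-\eta_h^- - u_h^+\eta_h^+\big)+\widecheck{u}_h[\eta_h]+\widecheck{\eta}_h[u_h]$. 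Writing $v^\pm=\{v\}\pm\frac12[v]$ turns $u_h^-\eta_h^- - u_h^+\eta_h^+$ into $-\{u_h\}[\eta_h]-\{\eta_h\}[u_h]$, and then the flux choices $\widecheck{u}_h=\{u_h\}+\alpha[u_h]$, $\widecheck{\eta}_h=\{\eta_h\}-\alpha[\eta_h]$ collapse the interface term to $\alpha[u_h][\eta_h]-\alpha[\eta_h][u_h]=0$. Hence, under periodic boundary conditions, the entire deterministic part vanishes; this is the discrete counterpart of $\int_I(2u\eta_x+2\eta u_x)dx=0$ and is precisely where the ``$\pm\alpha$'' structure of the generalized alternating fluxes enters.

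\emph{Part (a), quadratic variation.} It remains to evaluate $\int_I\big((du_h)^2+(d\eta_h)^2\big)dx$. Expanding $u_h|_{I_j}=\sum_{l=0}^k c_j^l(t)\phi_j^l$, taking $\widetilde{\varphi}=\phi_j^l$ in \eqref{phii} and invoking $L^2(I_j)$-orthogonality of the Legendre basis isolates the scalar It\^o SDE $dc_j^l=(\cdots)\,dt+\lambda_2\mu_j^l\big(\int_{I_j}\phi_j^l\,dW_t\,dx\big)$, whose martingale part is $\lambda_2\mu_j^l\sum_m\sqrt{\gamma_m}\big(\int_{I_j}\phi_j^l e_m\,dx\big)\,d\mathcal{B}_m$ by \eqref{defW}. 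Using $d\mathcal{B}_m\,d\mathcal{B}_{m'}=\delta_{mm'}\,dt$ and orthogonality once more, $\int_{I_j}(du_h)^2dx=\sum_{l}(\mu_j^l)^{-1}(dc_j^l)^2=\lambda_2^2\sum_{l}\mu_j^l\sum_m\big(\int_{I_j}\phi_j^l\sqrt{\gamma_m}e_m\,dx\big)^2dt$, and summing over $j$ gives $\int_I(du_h)^2dx=\lambda_2^2K\,dt$ with $K$ as in \eqref{added2}; similarly $\int_I(d\eta_h)^2dx=\lambda_1^2K\,dt$. Collecting terms yields $d\big(\norm{u_h}^2+\norm{\eta_h}^2\big)=2\int_I(\lambda_2u_h-\lambda_1\eta_h)\,dW_t\,dx+(\lambda_1^2+\lambda_2^2)K\,dt$; integrating over $[0,t]$ gives \eqref{1denergy11}, and taking expectation (the It\^o integral being a zero-mean martingale) gives \eqref{1denergy11_1}.

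\emph{Part (b).} The key point is that $\{\sqrt{\mu_j^l}\,\phi_j^l\}_{l=0}^k$ is an orthonormal basis of $P^k(I_j)$, so $\sum_{l=0}^k\mu_j^l\big(\int_{I_j}\phi_j^l\sqrt{\gamma_m}e_m\,dx\big)^2=\norm{\Pi_j\big(\sqrt{\gamma_m}e_m\big)}_{L^2(I_j)}^2$, with $\Pi_j$ the $L^2(I_j)$-orthogonal projection onto $P^k(I_j)$. A termwise Cauchy--Schwarz bound $\mu_j^l\big(\int_{I_j}\phi_j^l g\big)^2\le\norm{g}_{L^2(I_j)}^2$, summed over the $k+1$ indices $l$ and then over $j$ and $m$ (using $\sum_j\norm{e_m}_{L^2(I_j)}^2=1$), gives $K\le(k+1)Tr(Q)$; Bessel's inequality in fact gives the sharper $K\le Tr(Q)$. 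For the approximation claim I would use Pythagoras,
\[
Tr(Q)-K=\sum_m\gamma_m\sum_j\Big(\norm{e_m}_{L^2(I_j)}^2-\norm{\Pi_je_m}_{L^2(I_j)}^2\Big)=\sum_m\gamma_m\sum_j\norm{e_m-\Pi_je_m}_{L^2(I_j)}^2 ,
\]
and bound each inner summand both trivially by $\norm{e_m}_{L^2(I_j)}^2$ and, since $P^0\subset P^k$, by the mean-value estimate $\le Ch_j^2\norm{(e_m)_x}_{L^2(I_j)}^2\le Ch_j^3K_m^2$; summing over $j$ and using $h_j\le h$ gives $\sum_j\norm{e_m-\Pi_je_m}_{L^2(I_j)}^2\le\min\{1,\,Ch^2K_m^2\}\le(Ch^2K_m^2)^{\alpha/2}=C^{\alpha/2}h^\alpha K_m^\alpha$ for $\alpha\in(0,2]$. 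Summing over $m$ gives $|Tr(Q)-K|\le C^{\alpha/2}h^\alpha\sum_m\gamma_m K_m^\alpha=O(h^\alpha)$ under the stated summability hypothesis.

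\emph{Main obstacle.} I expect part (a) to be mostly careful bookkeeping: the deterministic cancellation is the standard DG energy-stability argument for generalized alternating fluxes, and the only new feature is propagating the additive noise through the modal SDEs to identify the discrete ``trace'' constant $K$ and to recognize that it equals $Tr(Q)$ only up to a polynomial-projection error. The more delicate step is the last part of (b), namely interpolating the two crude per-mode bounds so as to obtain the sharp rate $O(h^\alpha)$ and keeping track of the admissible range of $\alpha$. Along the way one should also verify that the series in \eqref{defW} and the finite sums over $j$ and $l$ may be interchanged freely, which is legitimate since all terms are nonnegative and $Tr(Q)<\infty$.
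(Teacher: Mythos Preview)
Your argument for part~(a) is correct and coincides with the paper's proof in spirit: the same test-function choices, the same It\^o expansion, and the same modal computation of the quadratic variation via the Legendre SDEs. The only cosmetic difference is in the flux bookkeeping: you regroup interface by interface and show each interface contribution vanishes identically, whereas the paper packages the same terms into a single quantity $\Theta=(\tfrac12+\alpha)\eta_h^-u_h^++(\tfrac12-\alpha)u_h^-\eta_h^+$ and observes that $\sum_j(\Theta_{j-\frac12}-\Theta_{j+\frac12})$ telescopes to zero under periodicity. These are equivalent.

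Your part~(b) is correct but follows a genuinely different route. The paper separates $l=0$ from $l\ge1$, writes $K-Tr(Q)$ as a sum $\mathrm{I}+\mathrm{II}$, uses the pointwise bound $|e_m-\overline{e}_m|\le K_mh_j$, and then invokes an additional assumption that the $e_m$ are uniformly bounded to control $|e_m-\overline{e}_m|^{2-\alpha}$. You instead recognize that $\sum_{l}\mu_j^l(\int_{I_j}\phi_j^l\sqrt{\gamma_m}e_m\,dx)^2=\|\Pi_j(\sqrt{\gamma_m}e_m)\|_{L^2(I_j)}^2$, which immediately gives $Tr(Q)-K=\sum_m\gamma_m\|e_m-\Pi e_m\|_{L^2(I)}^2\ge0$ by Pythagoras, hence the sharper bound $K\le Tr(Q)$ (the paper only gets $K\le(k+1)Tr(Q)$ from a cruder Cauchy--Schwarz). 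Your interpolation $\min\{1,Ch^2K_m^2\}\le(Ch^2K_m^2)^{\alpha/2}$ for $\alpha\in(0,2]$ then delivers the $O(h^\alpha)$ estimate without needing uniform boundedness of the $e_m$. Both approaches are implicitly limited to $\alpha\le2$; you state this explicitly, the paper does so tacitly through the exponent $2-\alpha$.
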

\begin{proof}
(a): From It\^{o}'s formula, we have
 \begin{equation}\label{Ito}
    d(u_h)^2=2u_hdu_h+(du_h)^2,\qquad d(\eta_h)^2=2\eta_hd\eta_h+(d\eta_h)^2.
 \end{equation}
By taking the test functions $\varphi=\eta_h$ and $\widetilde{\varphi}=u_h$ in the DG methods \eqref{phi}-\eqref{phii}, and summing the resulting equations up, we have 
  \begin{eqnarray}\label{qqq1}
     \int_{I_j}((du_h)u_h+(d\eta_h)\eta_h) dx&=&\int_{I_j}(\lambda_2u_h-\lambda_1\eta_h)dW_tdx+\Big(\int_{I_j}u_h(\eta_h)_xdx
+\eta_h(u_h)_xdx\Big)dt \notag \\
&&+\Big( -\big((\{u_h\}+\alpha[u_h])\eta_h^-\big)_{j+\frac12} + \big((\{u_h\}+\alpha[u_h])\eta_h^+\big)_{j-\frac12}\Big)dt
\notag \\
&&+\Big( -\big((\{\eta_h\}-\alpha[\eta_h])u_h^-\big)_{j+\frac12} + \big((\{\eta_h\}-\alpha[\eta_h])u_h^+\big)_{j-\frac12}\Big)dt\notag \\
 &=&-\Theta_{j+\frac12}+\Theta_{j-\frac12} +\int_{I_j}(\lambda_2u_h-\lambda_1\eta_h)dW_tdx,
  \end{eqnarray}
  where 
  \[\Theta=\left(\frac12+\alpha\right)\eta^-_hu_h^++\left(\frac12-\alpha\right)u_h^-\eta_h^+.\]

Let us represent the numerical solutions $u_h$ in the cell $I_j$ as
\[u_h(\omega,x,t)=\sum^k_{l=0}u_j^l(\omega,t)\phi_j^l(x),\]
which leads to
\[du_h=\sum^k_{l=0}du_j^l\phi_j^l.\]
Taking the test function $\widetilde{\varphi}=\phi_j^m$, $m=0,\cdots,k$, in \eqref{phii}, we obtain 
\begin{equation}\label{system}
  \sum^k_{l=0}\Big(\int_{I_j}\phi_j^m\phi_j^ldx\Big)du_j^l=
\mathcal{A}_j(\eta_h;\phi_j^m)dt+\int_{I_j}\lambda_2\phi_j^mdW_tdx
\end{equation}
where the operator 
\[\mathcal{A}_j(f;g)=\int_{I_j}fg_xdx-\big(\widecheck{f}g^-\big)_{j+\frac12}+
\big(\widecheck{f}g^+\big)_{j-\frac12},\]
is introduced for ease of presentation.
Denote the mass matrix by $L_j$ with the $(m,l)$ entry being
$\int_{I_j}\phi_j^l\phi_j^mdx$. Note that the orthogonal Legendre basis are chosen, therefore $L_j$ is a diagonal matrix,
so is the inverse matrix $L_j^{-1}$. Let us denote $L_j^{-1}$=diag$(\mu_j^0,\mu_j^1,\cdots,\mu_j^k)$ with $\mu_j^l=(\int_{I_j}(\phi_j^l)^2dx)^{-1}$. In addition, introduce the vectors
\[\textbf{u}_j=\bigg(u_j^0,u_j^1\cdots,u_j^k\bigg)^T, 
\textbf{A}_j=\bigg(\mathcal{A}_j(\eta_h;\phi_j^0),\mathcal{A}_j(\eta_h;\phi_j^1)
,\cdots,\mathcal{A}_j(\eta_h;\phi_j^k)\bigg)^T,~\Phi_j=\bigg(\phi_j^0,\phi_j^1,\cdots,\phi_j^k\bigg)^T,\]
and the equation \eqref{system} can be rewritten as a linear system 
\begin{equation}\label{linearsystem}
  L_jd\textbf{u}_j=\textbf{A}_jdt+\int_{I_j}\lambda_2\Phi_jdW_tdx,
\end{equation}
which leads to
\begin{equation}\label{du}
  d\textbf{u}_j=L_j^{-1}\textbf{A}_jdt+L_j^{-1}\int_{I_j}\lambda_2\Phi_jdW_tdx.
\end{equation}
Therefore, we have
\begin{eqnarray}
  \int_{I_j}(du_h)^2dx
  &=&\int_{I_j}\Big(\sum^k_{l=0}du_j^l\phi_j^l\Big)\Big(\sum^k_{l=0}du_j^l\phi_j^l\Big)dx
	=\sum^k_{l=0}\bigg(\int_{I_j}\phi_j^l\phi_j^ldx\bigg)(du_j^l)^2 \notag \\
&=&L_jd\textbf{u}_j\cdot 
	d\textbf{u}_j=\Big(\int_{I_j}\lambda_2\Phi_jdW_tdx\Big)\cdot L_j^{-1}\Big(\int_{I_j}\lambda_2\Phi_jdW_tdx\Big)\notag\\
  &=&\lambda_2^2\sum_{l=0}^k \mu_j^l\Big(\int_{I_j}\phi_j^ldW_tdx\Big)^2. \label{718663}
\end{eqnarray}
The definition of $W_t$ yields 
\begin{eqnarray*}
  \Big(\int_{I_j}\phi_j^ldW_tdx\Big)^2=\Big(\int_{I_j}\phi_j^l\sum^\infty_{m=1}\sqrt{\gamma_m}e_m(x)d\mathcal{B}_mdx\Big)^2
  =\sum^\infty_{m=1} \Big(\int_{I_j}\phi_j^l\sqrt{\gamma_m}e_mdx\Big)^2dt.
\end{eqnarray*}
Similarly, we have
\begin{equation}
\int_{I_j} (d\eta_h)^2dx=\lambda_1^2\sum_{l=0}^k \mu_j^l\Big(\int_{I_j}\phi_j^ldW_tdx\Big)^2.
\label{718663_1}
\end{equation}

The combination of \eqref{Ito}, \eqref{qqq1}, \eqref{718663} and \eqref{718663_1} leads to
\begin{align}
 \int_{I_j} (d(u_h)^2 + d(\eta_h)^2) dx =& -2\Theta_{j+\frac12}+2\Theta_{j-\frac12} \notag\\
 &+2\int_{I_j}(\lambda_2u_h-\lambda_1\eta_h)dW_tdx + (\lambda_1^2+\lambda_2^2)\sum_{l=0}^k \mu_j^l\sum^\infty_{m=1} \Big(\int_{I_j}\phi_j^l\sqrt{\gamma_m}e_mdx\Big)^2dt.
 \label{added1}
\end{align}
Summing over all the cells and integrating in time from $0$ to $t$, we can obtain \eqref{1denergy11}. Note that $\int_I(\lambda_2u_h-\lambda_1\eta_h)dW_sdx$ is an It\^{o} integral, thus 
$\mathbb{E}\Big(\int^t_0\int_I(\lambda_2u_h-\lambda_1\eta_h)dW_sdx\Big)=0$, and taking the expectation \eqref{1denergy11} leads to \eqref{1denergy11_1}. 

(b): For the constant $K$ defined in \eqref{added2}, we can bound it by
\begin{align*}
  K\le & \sum_{j=1}^N\sum_{l=0}^k \mu_j^l\sum^\infty_{m=1} 
  \int_{I_j}(\phi_j^l)^2dx\int_{I_j}{\gamma_m}e_m^2dx
  =(k+1)\sum^N_{j=1}\sum^\infty_{m=1} 
\int_{I_j}{\gamma_m}e_m^2dx\\
&=(k+1)\sum^\infty_{m=1} 
\int_{I}{\gamma_m}e_m^2dx=(k+1)Tr(Q),
\end{align*}
where $\mu_j^l=(\int_{I_j}(\phi_j^l)^2dx)^{-1}$ is used.

Next, we show that $K$ is an approximation of $Tr(Q)$. Note that $\phi_j^l=1$, $\mu_j^1 =1/h_j$, and $\int_{I_j}\phi_j^l dx=0$ for $l\ge 1$. 
Let us define $\overline{e}_m=\frac{1}{h_j} \int_{I_j} e_m dx$, and rewrite $K$ as 
\begin{eqnarray}\label{esC}
  K=\sum^N_{j=1}\sum^\infty_{m=1}\frac{\gamma_m}{h_j}\Big(\int_{I_j}e_mdx\Big)^2 +
  \sum^N_{j=1}\sum^\infty_{m=1}\sum_{l=1}^k\mu_j^l \gamma_m \Big(\int_{I_j} \phi_j^l 
  (e_m-\overline{e}_m) dx\Big)^2.
\end{eqnarray}
Using the fact that $Tr(Q)=\sum^\infty_{m=1}\gamma_m$ and $\sum^N_{j=1} \int_{I_j}e_m^2dx = 1$, we have
\begin{align}\notag
  K-Tr(Q)&=\sum^N_{j=1}\sum^\infty_{m=1}\gamma_m\Bigg(\frac{1}{h_j}\Big(\int_{I_j}e_mdx\Big)^2-\int_{I_j}e_m^2dx\Bigg)
+  \sum^N_{j=1}\sum^\infty_{m=1}\sum_{l=1}^k\mu_j^l \gamma_m \Big(\int_{I_j} \phi_j^l 
  (e_m-\overline{e}_m) dx\Big)^2 \\
 & =: \RNum{1}+\RNum{2}. \label{CandTr}
\end{align}
Notice that 
\begin{eqnarray*}
  \int_{I_j}e_m^2 dx=\int_{I_j}(e_m-\overline{e}_m+\overline{e}_m)^2dx
  =\int_{I_j}(e_m-\overline{e}_m)^2dx+\int_{I_j}\overline{e}_m^2 dx=
  \int_{I_j}(e_m-\overline{e}_m)^2dx+\frac1h\Big(\int_{I_j}e_mdx\Big)^2,
\end{eqnarray*}
which leads to
\begin{eqnarray*}
  \RNum{1}=-\sum^N_{j=1}\sum^\infty_{m=1}\gamma_m\int_{I_j}(e_m-\overline{e}_m)^2dx.
\end{eqnarray*}
Since $e_m$ is uniformly bounded, there exists some constant $M$ such that $|e_m-\overline{e}_m|^{2-\alpha}\le M$, and we have the following estimate
\begin{align*}
  \RNum{1}&=-\sum^N_{j=1}\sum^\infty_{m=1}\gamma_m\int_{I_j}(e_m-\overline{e}_m)^2dx \\
    &\le M\sum^N_{j=1}\sum_{m=1}^\infty\gamma_m\int_{I_j}|e_m-\overline{e}_m|^\alpha dx
  \le M\sum^N_{j=1}\sum_{m=1}^\infty\gamma_m (K_m)^\alpha h^{1+\alpha}	  \\
 &= M \Big(\sum^N_{j=1} h^{1+\alpha} \Big) \Big(\sum_{m=1}^\infty\gamma_m (K_m)^\alpha\Big) = O(h^{\alpha}),
\end{align*}
where $K_m=\norm{(e_m)_x}_{{\infty}}$ and the assumption $\sum_{m=1}^\infty\gamma_m (K_m)^\alpha<\infty$ is used.
For the other term $\RNum{2}$, we can apply Young's inequality and follow the similar analysis as above to obtain
\begin{align*}
   \RNum{2}&=\sum^N_{j=1}\sum^\infty_{m=1}\sum_{l=1}^k\mu_j^l \gamma_m \Big(\int_{I_j} \phi_j^l 
  (e_m-\overline{e}_m) dx\Big)^2
  \le \sum^N_{j=1}\sum^\infty_{m=1}\sum_{l=1}^k \mu_j^l \gamma_m \int_{I_j} (\phi_j^l )^2dx \int_{I_j}(e_m-\overline{e}_m)^2dx\\
  &= \sum^N_{j=1}\sum^\infty_{m=1}\sum_{l=1}^k\gamma_m\int_{I_j}(e_m-\overline{e}_m)^2dx
  = k\sum^N_{j=1}\sum^\infty_{m=1}\gamma_m\int_{I_j}(e_m-\overline{e}_m)^2dx =  O(h^{\alpha}).
\end{align*}
The combination of these results lead to the conclusion that
$K-Tr(Q)=O(h^\alpha)$, which finishes the proof.
\end{proof}

\begin{remark}\label{remark1}
  If $W_t$ is the standard Brownian motion, we have $e_1=1$ and $e_m=0$ for $m>1$, therefore it can be easily shown that $\RNum{1}=\RNum{2}=0$ which leads to $K= Tr(Q)$. This means that the continuous energy law \eqref{thm2.9} is exactly preserved by the proposed method. 
\end{remark}

\subsection{Optimal error estimates}
The optimal error estimate analysis of the proposed semi-discrete DG method will be provided in this subsection.

We firstly define the generalized Radau $\mathcal{P}^\alpha$ projection operators which will be used in the analysis. On any cell $I_j$ and for any function $g(x)$, its projection $\mathcal{P}^\alpha g$ into the space $V_h^k$ is given by
\[\int_{I_j}(\mathcal{P}^\alpha g-g(x))v(x)dx=0,~\forall v(x)\in P^{k-1}(I_j),~~\text{and} ~
(\{\mathcal{P}^\alpha g\}+\alpha[\mathcal{P}^\alpha g])_{j+\frac12}=g(x_{j+\frac12}).\]
The following property on the projection error is studied in \cite{XM 2016,SX2021} and will be used throughout this section. 
\begin{lemma}[\bf{Projection error}]
  Let $\mathcal{P}^\alpha$ (with $\alpha\neq 0$) be the generalized Radau projection defined above. There exists some constant $C$ which is independent of $h$, such that
  \[\norm{\mathcal{P}^{\alpha}g-g}\le Ch^{k+1}.\]
\end{lemma}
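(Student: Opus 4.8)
The plan is to follow the classical route for generalized Gauss–Radau projections of this type: establish that $\mathcal{P}^\alpha g$ is well defined, show that $\mathcal{P}^\alpha$ reproduces piecewise polynomials and is $L^2$-stable uniformly in $h$, and then combine this with standard polynomial approximation. For well-posedness, I would work on each cell $I_j$ after the affine map to the reference interval and expand $\mathcal{P}^\alpha g|_{I_j}=\sum_{l=0}^k u_j^l\phi_j^l$ in the Legendre basis. The $k$ interior conditions $\int_{I_j}(\mathcal{P}^\alpha g-g)v\,dx=0$ for $v\in P^{k-1}(I_j)$ determine $u_j^0,\dots,u_j^{k-1}$ (they are the low-order Legendre coefficients of $g$ on $I_j$), leaving the top coefficient $u_j^k$ free in each cell. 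Writing the $N$ interface relations as $(\tfrac12+\alpha)(\mathcal{P}^\alpha g)^+_{j+\frac12}+(\tfrac12-\alpha)(\mathcal{P}^\alpha g)^-_{j+\frac12}=g(x_{j+\frac12})$ and using the endpoint values of the Legendre polynomials, these become a linear recursion relating $u_{j+1}^k$ and $u_j^k$. Closing it with the periodic boundary condition used here gives a circulant-type $N\times N$ system whose homogeneous part amplifies $u_j^k$ by the factor $\big|\tfrac{1/2-\alpha}{1/2+\alpha}\big|$ per step; since this factor differs from $1$ for every $\alpha\neq 0$ (and the cases $\alpha=\pm\tfrac12$ decouple the projection completely into the usual one-sided Radau projections), the only homogeneous solution is zero, so $\mathcal{P}^\alpha g$ exists and is unique.

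Next I would note that any $q\in V_h^k$ satisfies the defining relations in place of $\mathcal{P}^\alpha q$, so uniqueness forces $\mathcal{P}^\alpha q=q$; thus $\mathcal{P}^\alpha$ is a projection onto $V_h^k$. The key step is a uniform $L^2$-bound: tracking the data through the interior solve (controlled by the local $L^2$ projection of $g$ on $I_j$) and the interface recursion (whose inverse has $\ell^\infty\to\ell^\infty$ norm bounded independently of $N$ precisely because the amplification factor is bounded away from $1$), and scaling back to $I_j$, I would obtain a cellwise estimate of the form $\norm{\mathcal{P}^\alpha g-g}_{L^2(I_j)}\le C\big(\norm{g-\widehat q}_{L^2(I_j)}+h^{1/2}\,|(g-\widehat q)(x_{j\pm\frac12})|\big)$ for any local polynomial $\widehat q\in P^k(I_j)$, with $C$ independent of $h$ and $N$.

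Finally I would take $\widehat q$ to be the restriction of the global $L^2$ projection $\Pi g\in V_h^k$, for which Bramble–Hilbert and scaling give $\norm{g-\Pi g}_{L^2(I_j)}\le Ch^{k+1}|g|_{H^{k+1}(I_j)}$ and, by a trace/inverse inequality, $h^{1/2}|(g-\Pi g)(x_{j\pm\frac12})|\le Ch^{k+1}|g|_{H^{k+1}(I_j)}$; summing over $j$ yields the claim. Equivalently, since $\mathcal{P}^\alpha\Pi g=\Pi g$, one has $\norm{\mathcal{P}^\alpha g-g}\le\norm{\mathcal{P}^\alpha(g-\Pi g)}+\norm{g-\Pi g}\le(1+C)\norm{g-\Pi g}\le Ch^{k+1}$.

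The main obstacle is the uniform $L^2$-stability of $\mathcal{P}^\alpha$ in the second step — i.e., showing that the globally coupled interface solve neither loses powers of $h$ nor degrades as $N\to\infty$ — which is exactly where the hypothesis $\alpha\neq0$ enters; the remainder is routine affine scaling and classical approximation theory, and the details can be found in \cite{XM 2016,SX2021}.
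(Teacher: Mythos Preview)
The paper does not prove this lemma at all; it states the result and cites \cite{XM 2016,SX2021}, exactly the references you yourself defer to at the end of your sketch. Your outline is essentially the argument found in those papers.

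One correction is worth making, though. Your displayed cellwise inequality
\[
\norm{\mathcal{P}^\alpha g-g}_{L^2(I_j)}\le C\big(\norm{g-\widehat q}_{L^2(I_j)}+h^{1/2}\,|(g-\widehat q)(x_{j\pm\frac12})|\big)
\]
cannot hold in this purely local form when $\alpha\ne\pm\tfrac12$: the interface conditions couple all cells through the recursion for the top Legendre coefficient, so the error on $I_j$ feels data from every interface. What the uniform bound on the recursion inverse actually delivers is a \emph{global} $L^2$ estimate (or a localized one with geometrically decaying cross-cell weights), not a local one. For the same reason your final ``equivalently'' line is too fast: $\mathcal{P}^\alpha$ reads point values and is not $L^2\!\to\!L^2$ bounded, so $\norm{\mathcal{P}^\alpha(g-\Pi g)}\le C\norm{g-\Pi g}$ requires the $h^{1/2}$ interface-value terms on the right as well. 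Neither point breaks the proof---replace the local bound by the global one and carry the interface terms through, then control both pieces by $Ch^{k+1}|g|_{H^{k+1}}$ as you indicate---but as written those two intermediate steps are only literally correct in the decoupled Radau cases $\alpha=\pm\tfrac12$.
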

\begin{theorem}[\bf{Optimal error estimate}] \label{error estimate}
  Let $u$, $\eta\in L^2(\Omega\times[0,T];H^{k+2})$ be the strong solutions to the one-dimensional stochastic Maxwell equations with additive noise \eqref{1}, and $u_h,~\eta_h\in{V}^k_h$ be the numerical solutions given by the DG scheme \eqref{phi} and \eqref{phii}. With the initial conditions chosen as
$$\eta_h(x,0)=\mathcal{P}^{-\alpha}\eta(x,0), ~u_h(x,0)=\mathcal{P}^{\alpha} u(x,0), $$
there holds the following error estimates
  \begin{eqnarray}\label{thm2}
      \norm{u-u_h}^2+\norm{\eta-\eta_h}^2 \le Ch^{2k+2}.
  \end{eqnarray}
\end{theorem}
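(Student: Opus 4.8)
The plan is to run the standard energy-based DG error argument on the Radau-projected errors; the genuinely new ingredient is the It\^o correction produced by the additive noise. Let $\mathcal{P}^{\alpha}$ and $\mathcal{P}^{-\alpha}$ be the generalized Radau projections and split $u-u_h=\xi_u+e_u$, $\eta-\eta_h=\xi_\eta+e_\eta$, with $\xi_u=u-\mathcal{P}^{\alpha}u$, $e_u=\mathcal{P}^{\alpha}u-u_h\in V_h^k$, $\xi_\eta=\eta-\mathcal{P}^{-\alpha}\eta$, $e_\eta=\mathcal{P}^{-\alpha}\eta-\eta_h\in V_h^k$. The projection error Lemma gives $\norm{\xi_u},\norm{\xi_\eta}\le Ch^{k+1}$, so by the triangle inequality it suffices to establish the mean-square bound $\mathbb{E}\big(\norm{e_u(t)}^2+\norm{e_\eta(t)}^2\big)\le Ch^{2k+2}$ for every $t\in[0,T]$, which is how \eqref{thm2} is to be understood.

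I would first set up the error equations. Since $u$ and $\eta$ are single-valued at the cell interfaces, they satisfy \eqref{phi}--\eqref{phii} verbatim; subtracting the DG scheme, the additive-noise terms $\lambda_1\,dW_t$ and $\lambda_2\,dW_t$ cancel exactly, so $\int_{I_j}d(\eta-\eta_h)\varphi\,dx=\mathcal{A}_j(u-u_h;\varphi)\,dt$ and $\int_{I_j}d(u-u_h)\widetilde{\varphi}\,dx=\mathcal{A}_j(\eta-\eta_h;\widetilde{\varphi})\,dt$ for all $\varphi,\widetilde{\varphi}\in V_h^k$. The moment and endpoint conditions defining $\mathcal{P}^{\alpha}$ and $\mathcal{P}^{-\alpha}$ are chosen precisely to match the fluxes $\widecheck{u}_h=\{u_h\}+\alpha[u_h]$ and $\widecheck{\eta}_h=\{\eta_h\}-\alpha[\eta_h]$, so that $\mathcal{A}_j(\xi_u;\cdot)=\mathcal{A}_j(\xi_\eta;\cdot)=0$ on $V_h^k$; hence the error equations reduce to $\int_{I_j}de_\eta\,\varphi\,dx=\mathcal{A}_j(e_u;\varphi)\,dt-\int_{I_j}d\xi_\eta\,\varphi\,dx$ and $\int_{I_j}de_u\,\widetilde{\varphi}\,dx=\mathcal{A}_j(e_\eta;\widetilde{\varphi})\,dt-\int_{I_j}d\xi_u\,\widetilde{\varphi}\,dx$, with source increments $d\xi_u=-(\eta_x-\mathcal{P}^{\alpha}\eta_x)\,dt+\lambda_2(dW_t-\mathcal{P}^{\alpha}dW_t)$ and $d\xi_\eta=-(u_x-\mathcal{P}^{-\alpha}u_x)\,dt-\lambda_1(dW_t-\mathcal{P}^{-\alpha}dW_t)$.

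Then I would take $\varphi=e_\eta$, $\widetilde{\varphi}=e_u$, apply It\^o's formula to $e_u^2+e_\eta^2$, and sum over $j$. The telescoping identity already proved for Theorem~\ref{1denergy} gives $\sum_j\big(\mathcal{A}_j(e_u;e_\eta)+\mathcal{A}_j(e_\eta;e_u)\big)=0$ under the periodic boundary condition, so, integrating in time and using $e_u(0)=e_\eta(0)=0$ (which holds by the chosen initial data) and then taking expectations, $\tfrac12\mathbb{E}\big(\norm{e_u(t)}^2+\norm{e_\eta(t)}^2\big)$ equals a drift term $\mathbb{E}\int_0^t\int_I\big(e_u(\eta_x-\mathcal{P}^{\alpha}\eta_x)+e_\eta(u_x-\mathcal{P}^{-\alpha}u_x)\big)dx\,ds$ plus the It\^o correction $\tfrac12\mathbb{E}\int_0^t\int_I\big((de_u)^2+(de_\eta)^2\big)dx$ (the expectation of the It\^o integrals coming from the $dW_t$ parts of $d\xi_u,d\xi_\eta$ vanishing). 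The drift term is bounded, using Cauchy--Schwarz, Young's inequality and the projection Lemma, by $\tfrac12\int_0^t\mathbb{E}\big(\norm{e_u}^2+\norm{e_\eta}^2\big)ds+Ch^{2k+2}\mathbb{E}\int_0^t\big(\norm{u}_{H^{k+2}}^2+\norm{\eta}_{H^{k+2}}^2\big)ds$. For the correction term, the martingale parts of $e_u$ and $e_\eta$ are $\lambda_2(\mathcal{P}^{\alpha}-\pi)dW_t$ and $-\lambda_1(\mathcal{P}^{-\alpha}-\pi)dW_t$ with $\pi$ the $L^2$ projection onto $V_h^k$; using $dW_t=\sum_m\sqrt{\gamma_m}e_m\,d\mathcal{B}_m$ and the independence of the $\mathcal{B}_m$, $\mathbb{E}\int_0^t\int_I(de_u)^2dx=\lambda_2^2\,t\sum_m\gamma_m\norm{(\mathcal{P}^{\alpha}-\pi)e_m}^2\le Ch^{2k+2}\,t\sum_m\gamma_m\norm{e_m}_{H^{k+1}}^2$, which is $O(h^{2k+2})$ because the assumed regularity $u,\eta\in L^2(\Omega\times[0,T];H^{k+2})$ forces the noise to be spatially regular enough that $\sum_m\gamma_m\norm{e_m}_{H^{k+1}}^2<\infty$. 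Combining the estimates yields $\mathbb{E}\big(\norm{e_u(t)}^2+\norm{e_\eta(t)}^2\big)\le\int_0^t\mathbb{E}\big(\norm{e_u}^2+\norm{e_\eta}^2\big)ds+Ch^{2k+2}$, and Gr\"onwall's inequality followed by the triangle inequality finishes the proof.

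The main obstacle is the stochastic bookkeeping: even though the additive noise cancels in the error equation for $u-u_h$, it re-enters the equations for the projected errors $e_u,e_\eta$ via the projection error of $W_t$, generating a genuine It\^o correction $\tfrac12\int_I(de_u)^2dx$ that has no deterministic analogue, and controlling this term is exactly where the spatial smoothness of the noise (equivalently, the $H^{k+2}$ regularity of the solution) enters. The remaining pieces — the error decomposition, the annihilation of the projection-error contributions to $\mathcal{A}_j$ by the matched Radau projections, the telescoping of the interface terms (verbatim from Theorem~\ref{1denergy}), and Gr\"onwall — are routine adaptations of the deterministic DG analysis.
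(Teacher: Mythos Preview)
Your proposal is correct and follows essentially the same route as the paper: the Radau-projection error splitting, the annihilation of the projection-error contributions in the DG bilinear form, the telescoping of the flux terms, the It\^o correction controlled by the projection error of $dW_t$, and Gr\"onwall. The only cosmetic differences are that you take expectations explicitly (the paper's argument is written pathwise, although it silently drops the stochastic-integral terms in $d\epsilon^u,d\epsilon^\eta$, so your version is in fact the cleaner reading), and you identify the martingale part of $de_u$ directly as $\lambda_2(\mathcal{P}^{\alpha}-\pi)dW_t$, whereas the paper bounds $\int(d\xi^u)^2$ by testing the $d\xi^u$-equation with $d\xi^u$ and applying Young's inequality to arrive at $C\int(\mathcal{P}^{\alpha}dW_t-dW_t)^2$; both yield the same $O(h^{2k+2})$ contribution.
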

\begin{proof}
  Note that both the exact solution $\eta$ and the numerical solution $\eta_h$ satisfy the equation \eqref{phi}, and both $u$ and $u_h$ 
  satisfy \eqref{phii}, therefore we have the following error equations
  \begin{eqnarray}
  \int_{I_j}d(\eta-\eta_h)\varphi(x)dx=\Big(\int_{I_j}(u-u_h)\varphi_xdx 
  -((u-\widecheck{u}_h)\varphi^-)_{j+\frac12}+((u-\widecheck{u}_h)\varphi^+)_{j-\frac12}\Big)dt,\label{phie}\\
    \int_{I_j}d(u-u_h)\widetilde{\varphi}(x)dx=\Big(\int_{I_j}(\eta-\eta_h)\widetilde{\varphi}_xdx 
  -((\eta-\widecheck{\eta}_h)\widetilde{\varphi}^-)_{j+\frac12}+((\eta-\widecheck{\eta}_h)\widetilde{\varphi}^+)_{j-\frac12}\Big)dt.\label{phiie}
  \end{eqnarray}
  Let 
  \begin{eqnarray*}
    \xi^\eta=\mathcal{P}^{-\alpha}\eta-\eta_h,~~\epsilon^\eta=\mathcal{P}^{-\alpha}\eta-\eta,~~
    \xi^u=\mathcal{P}^\alpha u-u_h,~~\epsilon^u=\mathcal{P}^\alpha u-u,
  \end{eqnarray*}
  so that we can decompose the numerical error into two terms
  \begin{equation}\label{ooo}
   \eta-\eta_h=\xi^\eta-\epsilon^\eta,\qquad~u-u_h=\xi^u-\epsilon^u.
  \end{equation} 
By choosing the test functions $\varphi=\xi^\eta$, $\widetilde{\varphi}=\xi^u$ in \eqref{phie} and \eqref{phiie}, and summing up the resulting equations, we obtain
  \begin{align}
          \int_{I_j}(d\xi^\eta\xi^\eta+d\xi^u\xi^u) dx&=\int_{I_j}(d\epsilon^\eta\xi^\eta+d\epsilon^u\xi^u )dx\notag \\       
    &+\Big(\int_{I_j}\xi^u\xi^\eta_xdx
    -\big((\{\xi^u\}+\alpha[\xi^u])(\xi^\eta)^-\big)_{j+\frac12}
    +\big((\{\xi^u\}+\alpha[\xi^u])(\xi^\eta)^+\big)_{j-\frac12}\Big)dt \notag \\
    &-\Big(\int_{I_j}\epsilon^u\xi^\eta_xdx
     -\big((\{\epsilon^u\}+\alpha[\epsilon^u])(\xi^\eta)^-\big)_{j+\frac12}
    +\big((\{\epsilon^u\}+\alpha[\epsilon^u])(\xi^\eta)^+\big)_{j-\frac12}\Big)dt\notag \\
      &+\Big(\int_{I_j}\xi^\eta\xi^u_xdx
    -\big((\{\xi^\eta\}-\alpha[\xi^\eta])(\xi^u)^-\big)_{j+\frac12}
    +\big((\{\xi^\eta\}-\alpha[\xi^\eta])(\xi^u)^+\big)_{j-\frac12}\Big)dt \notag \\ 
    &-\Big(\int_{I_j}\epsilon^\eta\xi^u_xdx
     -\big((\{\epsilon^\eta\}-\alpha[\epsilon^\eta])(\xi^u)^-\big)_{j+\frac12}
    +\big((\{\epsilon^\eta\}-\alpha[\epsilon^\eta])(\xi^u)^+\big)_{j-\frac12}\Big)dt\notag \\
    &= \int_{I_j}(d\epsilon^\eta\xi^\eta+d\epsilon^u\xi^u) dx+ \Big(\widetilde{\Theta}_{j-\frac12}-\widetilde{\Theta}_{j+\frac12}\Big)dt,
        \label{err3}
  \end{align}
  where 
  $\widetilde{\Theta}=\big(\frac12+\alpha\big)(\xi^u)^+(\xi^\eta)^-+\big(\frac12-\alpha\big)(\xi^\eta)^+(\xi^u)^-$,
and the last equality follows from an integration by parts and the definition of $\epsilon^u$ and $\epsilon^\eta$:
  \[\int_{I_j}\epsilon^u\xi^\eta_xdx=\int_{I_j}\epsilon^\eta\xi^u_xdx=(\{\epsilon^u\}+\alpha[\epsilon^u])_{j\pm\frac12}=
  (\{\epsilon^\eta\}-\alpha[\epsilon^\eta])_{j\pm\frac12}=0.\]

By It\^{o}'s lemma, we have
\begin{eqnarray}\label{ito2}
 d(\xi^\eta)^2=2d\xi^\eta\xi^\eta+(d\xi^\eta)^2,~
 d(\xi^u)^2=2d\xi^u\xi^u+(d\xi^u)^2.
\end{eqnarray}
Note that 
\[d(\mathcal{P}^\alpha u)=\mathcal{P}^\alpha(du)=\mathcal{P}^\alpha(-\eta_xdt+\lambda_2dW_t)=
\mathcal{P}^\alpha(-\eta_xdt)+\lambda_2\mathcal{P}^\alpha(dW_t).\]
For any test function $\widetilde{\varphi}$, we have
\begin{eqnarray}\label{proj}
  \int_{I_j}(d\mathcal{P}^\alpha u)\widetilde{\varphi}dx
  =\int_{I_j}\mathcal{P}^\alpha(-\eta_x)\widetilde{\varphi}dxdt+\int_{I_j}\lambda_2\widetilde{\varphi}
  \mathcal{P}^\alpha(dW_t)dx.
\end{eqnarray}
Subtracting \eqref{phii} from \eqref{proj}, we obtain 
\begin{eqnarray*}
  \int_{I_j}d\xi^u\widetilde{\varphi}dx=\Big(\int_{I_j}(-\eta_h\widetilde{\varphi_x}+\mathcal{P}^\alpha
  (-\eta_x)\widetilde{\varphi})
  dx+(\eta_h^+\widetilde{\varphi}^-)_{j+\frac12}-(\eta_h^+\widetilde{\varphi}^+)_{j-\frac12}
  \Big)dt+\lambda_2\int_{I_j}(\mathcal{P}^\alpha(dW_t)-dW_t)\widetilde{\varphi}dx.
\end{eqnarray*}
By choosing $\widetilde{\varphi}=d\xi^u$, the first term on the right-hand side becomes zero due to It\^{o}'s calculus, and we have
\[\int_{I_j}(d\xi^u)^2 dx=\lambda_2\int_{I_j}(\mathcal{P}^\alpha(dW_t)-dW_t)d\xi^udx\le \frac12\int_{I_j}(d\xi^u)^2dx+C
\int_{I_j}(\mathcal{P}^\alpha(dW_t)-dW_t)^2dx,\]
which leads to 
\begin{equation} \label{added10}
\int_{I_j}(d\xi^u)^2dx\le C\int_{I_j}(\mathcal{P}^\alpha(dW_t)-dW_t)^2dx.
\end{equation}
In the same fashion, we can have
\begin{equation}\label{added11}
\int_{I_j}(d\xi^\eta)^2 dx\le C\int_{I_j}(\mathcal{P}^{-\alpha}(dW_t)-dW_t)^2dx.
\end{equation}
The combination of \eqref{ito2}, \eqref{err3} with \eqref{added10}, \eqref{added11} leads to
\begin{eqnarray}\label{err4}
  \begin{aligned}
  &\frac12\int_{I_j} \big( d(\xi^\eta)^2+d(\xi^u)^2 \big)dx\\
  &= \int_{I_j}(d\epsilon^\eta\xi^\eta + d\epsilon^u\xi^u)dx+\Big(\widetilde{\Theta}_{j-\frac12}-\widetilde{\Theta}_{j+\frac12}\Big)dt+\frac12\int_{I_j}\big( (d\xi^u)^2+(d\xi^\eta)^2 \big)dx\\
  &\le \int_{I_j}(d\epsilon^\eta\xi^\eta + d\epsilon^u\xi^u) dx+\Big(\widetilde{\Theta}_{j-\frac12}-\widetilde{\Theta}_{j+\frac12}\Big)dt+C\int_{I_j}(\mathcal{P}^{\pm\alpha}(dW_t)-dW_t)^2dx.
  \end{aligned}
\end{eqnarray}
Summing over $I_j$ and utilizing the periodic boundary conditions yields 
\begin{align}\label{err5}
  &\frac12\int_{I} \big( d(\xi^\eta)^2+d(\xi^u)^2 \big)dx
  \le \int_{I}(d\epsilon^\eta\xi^\eta + d\epsilon^u\xi^u) dx+C\norm{\mathcal{P}^{\pm\alpha}(dW_s)-dW_s}^2.
\end{align}
By integrating \eqref{err4} from $0$ to $t$ and noting that $\norm{\xi^\eta(x,0)}^2=\norm{\xi^u(x,0)}^2=0$, we have 
\begin{align*}
\norm{\xi^\eta(x,t)}^2+\norm{\xi^u(x,t)}^2
 & \le 2\Big(\int_0^t\int_{I} (d\epsilon^\eta\xi^\eta
  + d\epsilon^u\xi^u)dxds\Big)+C\int^t_0\norm{\mathcal{P}^{\pm\alpha}(dW_s)-dW_s}^2\\
 & \le \Big(\int^t_0\norm{\xi^\eta(x,s)}^2+\norm{\xi^u(x,s)}^2ds\Big)
  +Ch^{2k+2},
\end{align*}
where the projection error is used in the last inequality.
Applying the Gronwall's inequality and combining with the optimal projection error yields the desired optimal error estimate \eqref{thm2}.
\end{proof}

\subsection{Multi-symplectic structure }
The stochastic Maxwell equations \eqref{1} have a multi-symplectic structure, and we will show that the proposed DG method \eqref{phi}-\eqref{phii} preserves the multi-symplecticity. 

Following the idea in \cite{HJZ2014}, we introduce new variables
\[v_t=u,~~\zeta_t=\eta,~~P=u+\frac{1}{2}\zeta_x,~~Q=\eta+\frac{1}{2}v_x,\]
and rewrite \eqref{1} into the following system:
 \begin{equation}\label{2}
  \begin{cases}
\frac12\zeta_x=P-u, &\\
\frac12v_x=Q-\eta,&\\
-dP-\frac12\eta_xdt=-\lambda_2dW_t,&\\
-dQ-\frac12u_xdt=\lambda_1dW_t,&\\
dv=udt,&\\
d\zeta=\eta dt.&
\end{cases}
\end{equation}
Introduce the notation $z=(u,~\eta,~v,~\zeta,~P,~Q)^T$, and the system \eqref{2} can be rewritten as the multi-symplectic 
system
\begin{equation}\label{multi}
  Mdz+Kz_xdt=\nabla_zS_1(z)dt+\nabla_zS_2(z)dW_t,
\end{equation}
where 
\[M=\left(                 
  \begin{array}{cccccc}   
   0&0&0&0&0&0\\  
    0&0&0&0&0&0\\ 
    0&0&0&0&-1&0\\
    0&0&0&0&0&-1\\
    0&0&1&0&0&0\\
    0&0&0&1&0&0
  \end{array}
\right),\qquad K=\left(\begin{array}{cccccc}
0&0&0&\frac12&0&0\\
0&0&\frac12&0&0&0\\
0&-\frac12&0&0&0&0\\
-\frac12&0&0&0&0&0\\
0&0&0&0&0&0\\
0&0&0&0&0&0
\end{array} \right), \]
and 
\[S_1(z)=Pu+Q\eta-\frac{u^2}{2}-\frac{\eta^2}{2},\qquad S_2(z)=\lambda_1\zeta-\lambda_2v.\] 
Applying the exterior derivative to the system \eqref{multi} yields the following variation equation for the one-form $Z$:
\begin{equation}{\label{variation}}
  MdZ+KZ_xdt=\nabla^2S_1(z)Zdt,
\end{equation}
Let $U, V\in\mathbb{R}^d$ be any solutions to the variation equation \eqref{variation}, and define 
\begin{equation*}
 \omega(U,V)=MU\cdot V, \qquad \kappa(U,V)=KU\cdot V,
\end{equation*}
then the system \eqref{multi} can be shown to satisfy the multi-symplectic conservation law given by
\begin{equation}\label{consmulti}
  d\omega+\kappa_xdt=0.
\end{equation}
Multi-symplectic numerical method refers to the method that satisfies a consistent discrete version of this conservation law.

Since the new system \eqref{2} is equivalent to the original model \eqref{1}, we can rewrite the proposed DG methods \eqref{phi} and \eqref{phii} into a consistent formulation for the system \eqref{2}.
We start by defining the variables $v_h$ and $\zeta_h$ as follows:  for $u_h$ and $\eta_h$ 
defined in \eqref{phi} and \eqref{phii}, 
find $v_h, ~\zeta_h\in V_h^k$, such that for all test functions $\psi, \widetilde{\psi}\in V_h^k$, it holds that
\begin{eqnarray}
  &\int_{I_j}dv_h\psi dx=\int_{I_j}u_h\psi dxdt, \qquad\qquad
\int_{I_j}d\zeta_h\widetilde{\psi}dx=\int_{I_j}\eta_h\widetilde{\psi}dxdt. \label{v}
\end{eqnarray}
Next, find $P_h,~Q_h\in V_h^k$, such that for all $\varphi, \widetilde{\varphi}\in V_h^k$, it holds that
\begin{eqnarray}
  &&\int_{I_j}(P_h-u_h)\varphi dx=-\frac12\Big(\int_{I_j}
   \zeta_h\varphi_xdx-(\widehat{\zeta}_h\varphi^-)_{j+\frac12}
   +(\widehat{\zeta}_h\varphi^+)_{j-\frac12}\Big), \label{Ph} \\
     &&\int_{I_j}(Q_h-\eta_h)\widetilde{\varphi}dx=-\frac12\Big(\int_{I_j}
   v_h\widetilde{\varphi}_xdx-(\widehat{v}_h\widetilde{\varphi}^-)_{j+\frac12}
   +(\widehat{v}_h\widetilde{\varphi}^+)_{j-\frac12}\Big) \label{Qh},
\end{eqnarray}
where $\widehat{v}_h=\{v_h\}+2n[v_h],~
  \widehat{\zeta}_h=[\zeta_h]+2m[\zeta_h]$ for some $m,~n\in\mathbb{R}$ with $n-m=\alpha$. 
By combining the derivative of \eqref{Ph} and \eqref{Qh} with the equations \eqref{phii} and \eqref{phi}, we obtain
  \begin{eqnarray}
    &&\int_{I_j}dP_h\phi dx=\frac12\Big(\int_{I_j}\eta_h\phi_xdx-
   (\widehat{\eta}_h\phi^-)_{j+\frac12}
   +(\widehat{\eta}_h\phi^+)_{j-\frac12}\Big)dt+\int_{I_j}\lambda_2\phi dW_tdx
   \label{PPh},\\
   &&\int_{I_j}dQ_h\widetilde{\phi}dx=\frac12\Big(\int_{I_j}
   u_h\widetilde{\phi}_xdx-(\widehat{u}_h\widetilde{\phi}^-)_{j+\frac12}
+(\widehat{u}_h\widetilde{\phi}^+)_{j-\frac12}\Big)dt-\int_{I_j}\lambda_1
\widetilde{\phi}dW_tdx\label{QQh},
  \end{eqnarray}
  where $\widehat{u}_h=\{u_h\}-2m[u_h],~\widehat{\eta}_h=\{\eta_h\}-2n[\eta_h]$. 
  Combining \eqref{v}-\eqref{QQh}, we have derived the following expression: 
    find $P_h,~Q_h,~u_h,~\eta_h,~v_h,~\zeta_h~\in V_h^k$ such that 
  \begin{eqnarray}
   \begin{aligned}
        &\int_{I_j}(P_h-u_h)\varphi dx=-\frac12\Big(\int_{I_j}
   \zeta_h\varphi_xdx-(\widehat{\zeta}_h\varphi^-)_{j+\frac12}
   +(\widehat{\zeta}_h\varphi^+)_{j-\frac12}\Big), \label{DG1} \\
     &\int_{I_j}(Q_h-\eta_h)\widetilde{\varphi}dx=-\frac12\Big(\int_{I_j}
   v_h\widetilde{\varphi}_xdx-(\widehat{v}_h\widetilde{\varphi}^-)_{j+\frac12}
   +(\widehat{v}_h\widetilde{\varphi}^+)_{j-\frac12}\Big), \label{DG2}\\
   &\int_{I_j}dP_h\phi dx=\frac12\Big(\int_{I_j}\eta_h\phi_xdx-
   (\widehat{\eta}_h\phi^-)_{j+\frac12}
   +(\widehat{\eta}_h\phi^+)_{j-\frac12}\Big)dt+\int_{I_j}\lambda_2\phi dW_tdx,
   \label{DG3}\\
   &\int_{I_j}dQ_h\widetilde{\phi}dx=\frac12\Big(\int_{I_j}
   u_h\widetilde{\phi}_xdx-(\widehat{u}_h\widetilde{\phi}^-)_{j+\frac12}
+(\widehat{u}_h\widetilde{\phi}^+)_{j-\frac12}\Big)dt-\int_{I_j}\lambda_1
\widetilde{\phi}dW_tdx,\label{DG4}\\
&\int_{I_j}dv_h\psi dx=\int_{I_j}u_h\psi dxdt,\label{DG5}\\
&\int_{I_j}d\zeta_h\widetilde{\psi}=\int_{I_j}\eta_h\widetilde{\psi}dxdt , \label{DG6}
   \end{aligned}
  \end{eqnarray}
  hold for any $\varphi,~\widetilde{\varphi},~\phi,~\widetilde{\phi},~\psi,~\widetilde{\psi}~\in 
  V_h^k$.
  The numerical fluxes take the form 
    \[\widehat{u}_h=\{u_h\}-2m[u_h],~~\widehat{\eta}_h=\{\eta_h\}-2n[\eta_h],
    ~~\widehat{v}_h=\{v_h\}+2n[v_h],~~\widehat{\zeta}_h=[\zeta_h]+2m[\zeta_h].\]
Note that the method \eqref{DG1} can be rewritten into the corresponding DG scheme for the new system \eqref{multi}: find $z_h\in (V_h^k)^6$, such that for all  $\bm{\varphi}\in (V^k_h)^6$, it holds that 
  \begin{eqnarray}\label{DG}
  \int_{I_j}Mdz_h\cdot \bm{\varphi} dx-\Big(\int_{I_j}Kz_h\cdot\bm{\varphi}_xdx-
  \big(\widehat{Kz_h}\cdot\bm{\varphi}^-\big)_{j+\frac12}+
  \big(\widehat{Kz_h}\cdot\bm{\varphi}^+\big)_{j-\frac12}\Big)dt \notag \\
  =\int_{I_j}\nabla S_1(z_h)\cdot \bm{\varphi} dxdt+\int_{I_j}\nabla S_2(z_h)\cdot 
  \bm{\varphi} dW_tdx,
\end{eqnarray}
where $\widehat{Kz_h}$ is the numerical flux in the form
\[\widehat{Kz_h}=K\{z_h\}+A[z_h],
\qquad   
A= \left(\begin{array}{cccccc}
0&0&0&m&0&0\\
0&0&n&0&0&0\\
0&n&0&0&0&0\\
m&0&0&0&0&0\\
0&0&0&0&0&0\\
0&0&0&0&0&0
\end{array} \right). \]
Applying the exterior derivative to this scheme \eqref{DG} leads to the variational equation
\begin{eqnarray}\label{varia}
  \int_{I_j}MdZ_h\cdot \varphi dx-\Big(\int_{I_j}KZ_h\cdot\varphi_xdx-
  \big(\widehat{KZ_h}\cdot\varphi^-\big)_{j+\frac12}+
  \big(\widehat{KZ_h}\cdot\varphi^+\big)_{j-\frac12}\Big)dt 
  =\int_{I_j}\nabla^2 S_1(z)Z_h\cdot \varphi dxdt,
  \end{eqnarray}

Following the proof of multi-symplecticity of DG methods in \cite{SX2020}, we have the following results.
  \begin{lemma}\label{lemma1}
    For any $U_h, V_h\in {V}^k_h$, we have 
    \begin{eqnarray}\label{lemma1-1}
      KU^-_h\cdot V_h^--\widehat{KU_h}\cdot V_h^-+
      \widehat{KV_h}\cdot U_h^-=
      KU^+_h\cdot V_h^+-\widehat{KU_h}\cdot V_h^++
      \widehat{KV_h}\cdot U_h^+=\mathcal{F}_K(U_h,V_h),
    \end{eqnarray}
    where 
    \[\mathcal{F}_K(U_h,V_h)=\{KU_h\cdot V_h\}-\widehat{KU_h}\cdot \{V_h\}+
    \widehat{KV_h}\cdot \{U_h\}.\]
  \end{lemma}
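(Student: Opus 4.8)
The plan is to reduce the identity \eqref{lemma1-1} to a finite linear-algebra computation at the single interface $x_{j+\frac12}$, relying on two structural facts read off directly from the displayed matrices: $K$ is skew-symmetric ($K^{\mathrm T}=-K$), while the correction matrix $A$ appearing in the numerical flux $\widehat{Kz_h}=K\{z_h\}+A[z_h]$ is symmetric ($A^{\mathrm T}=A$). Writing $a=U_h^{+}$, $b=U_h^{-}$, $c=V_h^{+}$, $d=V_h^{-}$ for the two one-sided traces at $x_{j+\frac12}$, one has $\{U_h\}=\tfrac12(a+b)$, $[U_h]=a-b$ (and likewise for $V_h$), so that $\widehat{KU_h}=\tfrac12K(a+b)+A(a-b)$ and $\widehat{KV_h}=\tfrac12K(c+d)+A(c-d)$; every quantity in \eqref{lemma1-1} then becomes an explicit bilinear expression in $a,b,c,d$.

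The single identity that drives everything is
\[
\widehat{KV_h}\cdot[U_h]-\widehat{KU_h}\cdot[V_h]=KU_h^{-}\cdot V_h^{-}-KU_h^{+}\cdot V_h^{+}.
\]
To obtain it, I would expand the left-hand side as $K\{V_h\}\cdot[U_h]+A[V_h]\cdot[U_h]-K\{U_h\}\cdot[V_h]-A[U_h]\cdot[V_h]$; the two $A$-terms cancel, since $A^{\mathrm T}=A$ gives $A[V_h]\cdot[U_h]=A[U_h]\cdot[V_h]$, and the remaining $K$-terms, after substituting the average/jump expressions and repeatedly using $Kx\cdot y=-Ky\cdot x$, collapse to $Kb\cdot d-Ka\cdot c$, which is exactly the right-hand side (equivalently, the right-hand side equals $-[KU_h\cdot V_h]$).

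With this in hand the three-way equality follows by adding and subtracting averages. For the equality of $\mathcal{F}_K(U_h,V_h)$ with the $(-)$-trace expression, I would use $\{w\}-w^{-}=\tfrac12[w]$ — applied to $U_h$, to $V_h$, and to the scalar $KU_h\cdot V_h$ — to rewrite their difference as $\tfrac12[KU_h\cdot V_h]-\tfrac12\widehat{KU_h}\cdot[V_h]+\tfrac12\widehat{KV_h}\cdot[U_h]$, which vanishes by the auxiliary identity. The same computation with $\{w\}-w^{+}=-\tfrac12[w]$ shows that $\mathcal{F}_K(U_h,V_h)$ also equals the $(+)$-trace expression; hence both one-sided expressions agree with $\mathcal{F}_K(U_h,V_h)$, and therefore with each other, which is \eqref{lemma1-1}.

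There is no genuine obstacle here: the lemma is a routine, if slightly fiddly, bilinear-algebra manipulation in the spirit of \cite{SX2020}. The only points demanding care are the sign bookkeeping when converting between one-sided traces and average/jump form, and the structural facts $K^{\mathrm T}=-K$ and $A^{\mathrm T}=A$ — the symmetry of $A$ being precisely what makes the $A[U_h]\cdot[V_h]$ contributions drop out, so that the skew-symmetry of $K$ alone governs the identity.
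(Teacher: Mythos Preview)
The paper does not supply a proof of this lemma; it simply states the result after the sentence ``Following the proof of multi-symplecticity of DG methods in \cite{SX2020}, we have the following results,'' so there is no in-paper argument to compare against. Your proof is correct and is precisely the computation one expects from \cite{SX2020}: the two structural facts $K^{\mathrm T}=-K$ and $A^{\mathrm T}=A$ are exactly what is needed, and organizing the calculation around the auxiliary identity $\widehat{KV_h}\cdot[U_h]-\widehat{KU_h}\cdot[V_h]=-[KU_h\cdot V_h]$ together with the relations $\{w\}-w^{\mp}=\pm\tfrac12[w]$ is the clean way to handle the bookkeeping.
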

  
  \begin{theorem}[{\bf Conservation of multi-symplecticity}]\label{conservmulti}
    Let  $U_h, V_h$ be any solutions to the variational equation \eqref{varia}, we have the semi-discrete version of 
    the multi-symplectic conservation laws
    \begin{equation}\label{dismul}
      \int_{I_j} d(MU_h\cdot V_h)dx-\Big(\mathcal{F}_K(U_h,V_h)_{j+\frac12}
      -\mathcal{F}_K(U_h,V_h)_{j-\frac12}\Big)dt=0.
    \end{equation}
  \end{theorem}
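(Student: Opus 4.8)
The plan is to follow the deterministic argument for DG multi-symplecticity from \cite{SX2020}: test the variational scheme \eqref{varia} against the two one-form solutions, subtract the two identities, and collapse the resulting interface contributions with Lemma \ref{lemma1}. Concretely, I would first take $\varphi=V_h$ in \eqref{varia} with the one-form solution taken to be $U_h$, then take $\varphi=U_h$ with the one-form solution $V_h$, and subtract. On the right-hand sides the volume source terms cancel because $\nabla^2 S_1(z)$ is a Hessian, hence symmetric, so that $\nabla^2 S_1(z)U_h\cdot V_h=\nabla^2 S_1(z)V_h\cdot U_h$ pointwise. What is left is an identity tying together $\int_{I_j}\bigl(MdU_h\cdot V_h-MdV_h\cdot U_h\bigr)\,dx$, the interior term $\int_{I_j}\bigl(KU_h\cdot(V_h)_x-KV_h\cdot(U_h)_x\bigr)\,dx$, and the four interface terms built from $\widehat{KU_h}$ and $\widehat{KV_h}$ at $x_{j\pm\frac12}$.

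Next I would rewrite the $M$-part as $\int_{I_j}\bigl(MdU_h\cdot V_h-MdV_h\cdot U_h\bigr)\,dx=\int_{I_j}d(MU_h\cdot V_h)\,dx$, using the skew-symmetry of $M$ (so that $MU_h\cdot dV_h=-MdV_h\cdot U_h$) together with the product rule $d(MU_h\cdot V_h)=MdU_h\cdot V_h+MU_h\cdot dV_h$. The one point that needs explicit verification here is that \emph{no It\^o second-order term contaminates this product rule}: since $S_2$ is affine in $z$ we have $\nabla^2 S_2\equiv 0$, so the variational equation \eqref{varia} carries no $dW_t$ term, $dU_h$ and $dV_h$ are pure drift (of bounded variation in $t$), and therefore $dU_h\cdot dV_h=0$. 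For the interior $K$-term I would integrate by parts on $I_j$ and use the skew-symmetry of $K$ (so $K(U_h)_x\cdot V_h=-KV_h\cdot(U_h)_x$), which makes the volume integrals disappear and leaves
\[
\int_{I_j}\bigl(KU_h\cdot(V_h)_x-KV_h\cdot(U_h)_x\bigr)\,dx=\bigl(KU_h^-\cdot V_h^-\bigr)_{j+\frac12}-\bigl(KU_h^+\cdot V_h^+\bigr)_{j-\frac12}.
\]

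Finally, I would assemble all the terms supported at $x_{j+\frac12}$, namely $\bigl(KU_h^-\cdot V_h^--\widehat{KU_h}\cdot V_h^-+\widehat{KV_h}\cdot U_h^-\bigr)_{j+\frac12}$, and recognize it as $\mathcal{F}_K(U_h,V_h)_{j+\frac12}$ by Lemma \ref{lemma1}; likewise the terms at $x_{j-\frac12}$ collapse to $-\mathcal{F}_K(U_h,V_h)_{j-\frac12}$ via the second equality in \eqref{lemma1-1}. Putting the three pieces together gives exactly \eqref{dismul}. I expect the only genuinely delicate step to be the stochastic bookkeeping in the $M$-term — confirming that the product rule yields precisely $d(MU_h\cdot V_h)$ with no correction, which hinges on the variational equation being drift-only; the rest is sign-tracking through skew-symmetry, one integration by parts, and a direct invocation of the already-proved Lemma \ref{lemma1}.
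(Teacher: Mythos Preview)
Your proposal is correct and follows essentially the same route as the paper: test \eqref{varia} with $V_h$ and $U_h$, subtract, use the symmetry of $\nabla^2 S_1$ and the skew-symmetry of $M$ and $K$, integrate by parts, and invoke Lemma~\ref{lemma1}. Your justification that the It\^o correction $MdU_h\cdot dV_h$ vanishes because $S_2$ is affine (so \eqref{varia} carries no $dW_t$ and both differentials are pure drift) is exactly the content of the paper's argument, only phrased a bit more transparently.
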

  \begin{proof}
    By It\^{o}'s lemma, we have
    \begin{equation}\label{added12}
	d(MU_h\cdot V_h)=MdU_h\cdot V_h+MU_h\cdot dV_h+MdU_h\cdot dV_h.
    \end{equation}
    Note that $U_h(\omega,x,t), V_h(\omega,x,t)\in{V}^k_h$, which can be rewritten as 
    \[U_h(\omega,x,t)=\sum^k_{l=0}\varphi_j^l(x)U^l_j(\omega,t),\qquad V_h(\omega,x,t)=\sum^k_{l=0}\varphi_j^l(x)V^l_j(\omega,t),\]
    where  $\{\varphi_j^l(x),~l=0,\cdots, k\}$ is a set of basis.
    Therefore one has
    \[dU_h=\sum^k_{l=0}\varphi_j^l(x)dU^l_j,\qquad dV_h=\sum^k_{l=0}\varphi_j^l(x)dV^l_j.\]
In the variational equation \eqref{varia}, we set $dZ_h$ to be $dU_h$, and take the test function $\varphi$ to be $dV_h$. As the second and third terms in \eqref{varia} are both drift terms, we can conclude that $\int_{I_j}MdU_h\cdot dV_hdx=0$.
By combining \eqref{added12} and \eqref{varia}, and utilizing the fact that $M$ is anti-symmetric and $\nabla^2S_1$ is symmetric, we obtain
    \begin{eqnarray*}
      &&\int_{I_j}d(MU_h\cdot V_h)dx=\int_{I_j}(MdU_h\cdot V_h+MU_h\cdot dV_h ) dx
      =\int_{I_j} (MdU_h\cdot V_h-MdV_h\cdot U_h ) dx\\
      &&=\Bigg(\int_{I_j}KU_h\cdot(V_h)_xdx-\big(\widehat{KU_h}\cdot V_h^-\big)_{j+\frac12}
      +\big(\widehat{KU_h}\cdot 
      V_h^+\big)_{j-\frac12}\Bigg)dt+\int_{I_j}\nabla^2S_1U_h\cdot V_hdxdt\\
      &&-\Bigg(\int_{I_j}KV_h\cdot(U_h)_xdx-\big(\widehat{KV_h}\cdot U_h^-\big)_{j+\frac12}
      +\big(\widehat{KV_h}\cdot U_h^+\big)_{j-\frac12}\Bigg)dt-\int_{I_j}\nabla^2S_1V_h\cdot 
      U_hdxdt \\
      &&=\Big(\big(KU_h^-\cdot V_h^--\widehat{KU_h}\cdot V_h^-+\widehat{KV_h}\cdot U_h^- \big)_{j+\frac12}
      -\big(KU_h^+\cdot V_h^+-\widehat{KU_h}\cdot V_h^++\widehat{KV_h}\cdot U_h^+\big)_{j-\frac12} \Big)dt\\
      &&= \Big(\mathcal{F}_K(U_h,V_h)_{j+\frac12}
      -\mathcal{F}_K(U_h,V_h)_{j-\frac12}\Big)dt,
    \end{eqnarray*}
where the last equality follows from Lemma \ref{lemma1}. This finishes the proof.
  \end{proof}

\section{Two-dimensional stochastic Maxwell equations with additive noise}\label{2dmtsplctc}
\setcounter{equation}{0} \setcounter{figure}{0}\setcounter{table}{0}

In this section, we will present the discontinuous Galerkin methods for two-dimensional stochastic Maxwell equations with additive noise on cartesian meshes, and study the stability, error estimate and multi-symplecticity of the proposed methods. 

The two-dimensional rectangular computational domain is set to be $I\times J$, and we consider the rectangular partition with the cells denoted by $I_i\times J_j=[x_{i-\frac12},x_{i+\frac12}]\times [y_{j-\frac12},y_{j+\frac12}]$ for $i=1,2,\cdots, N_x$ and $j=1,2\cdots, N_y$. 
Let $x_i=\frac12(x_{i-\frac12}+x_{i+\frac12})$, and $y_j=\frac12(y_{j-\frac12}+y_{j+\frac12})$ . Furthermore, we define the mesh size 
in both directions as $h_{x,i}=x_{i+\frac12}-x_{i-\frac12}$, $h_{y,j}=y_{j+\frac12}-y_{j-\frac12}$, with $h_x=\max_i h_{x,i}$, $h_y=\max_j 
h_{y,j}$ and $h=\max(h_x,h_y)$ being the maximum mesh size. Similar to the 
one-dimensional case, we define the two dimensional piecewise polynomial space $\mathbb{V}_h^k$ as follows: 
 \[\mathbb{V}_h^k=\{v(x,y): v|_{I_i\times J_j}\in Q^k(I_i\times J_j)=P^k(I_i)\otimes P^k(J_j),~~i=1,2,\cdots,N_x; j=1,2,\cdots,N_y\}.\]

The two-dimensional stochastic Maxwell equations with additive noise take the form
  \begin{equation}\label{2d}
  \begin{cases}
-dE+T_xdt-S_ydt=\lambda_1dW_t,\\
dS+E_ydt=\lambda_2dW_t,\\
dT-E_xdt=\lambda_2dW_t.
\end{cases}
\end{equation}
The DG scheme for \eqref{2d} is formulated as follows: find $E_h,~S_h,~T_h\in 
\mathbb{V}_h^k$, such that for all test functions $\varphi,~\psi,~\phi\in\mathbb{V}_h^k$, it holds that
 \begin{eqnarray}
   \int_{J_j}\int_{I_i}dE_h\varphi dxdy&=&-\int_{J_j}\Bigg(
   \int_{I_i}T_h\varphi_xdx-\big(\widetilde{(T_h)_{\alpha_1}}\varphi^-\big)_{i+\frac12,y}+
   \big(\widetilde{(T_h)_{\alpha_1}}\varphi^+\big)_{i-\frac12,y}\Bigg)
   dydt \notag \\
   &+&\int_{I_i}\Bigg(\int_{J_j}S_h\varphi_ydy-\big(\widetilde{(S_h)_{-\alpha_2}}\varphi^-\big)_{x,j+\frac12}   
   +\big(\widetilde{(S_h)_{-\alpha_2}}\varphi^+\big)_{x,j-\frac12}\Bigg)dxdt\notag \\
   &-&
   \int_{J_j}\int_{I_i}\lambda_1\varphi dW_tdxdy, \label{2ddgg1}\\
   \int_{J_j}\int_{I_i}dS_h\psi dxdy &=&
   \int_{I_i}\Bigg(\int_{J_j}E_h\psi_ydy-\big(\widetilde{(E_h)_{\alpha_2}}\psi^-\big)_{x,j+\frac12}
     +\big(\widetilde{(E_h)_{\alpha_2}}\psi^+\big)_{x,j-\frac12}\Bigg)dxdt \notag \\
     &+&\int_{J_j}\int_{I_i}
     \lambda_2\psi dW_tdxdy , \label{2ddgg2}\\
     \int_{J_j}\int_{I_i} dT_h \phi dxdy
     &=& -\int_{J_j}\Bigg(\int_{I_i}E_h\phi_x dx -\big(\widetilde{(E_h)_{-\alpha_1}}\phi^-\big)_{i+\frac12,y}
     +\big(\widetilde{(E_h)_{-\alpha_1}}
     \phi^+\big)_{i-\frac12,y}\Bigg)dydt\notag \\
     &+&\int_{J_j}\int_{I_i}\lambda_2
     \phi dW_tdxdy, \label{2ddgg3}
 \end{eqnarray} 
where the generalized alternating numerical fluxes are defined as follows:
\[\widetilde{g_\alpha}=\{g\}+\alpha[g], \qquad \text{for}~ g\in\mathbb{V}_h^k~\text{and}~ \alpha\in\{\pm\alpha_1,\pm\alpha_2\}\subset\mathbb{R}.\]
For the ease of presentation, we also introduce the following operators: for $\alpha\in\mathbb{R}$, 
$f,~g\in\mathbb{V}_h^k$, 
\[\mathcal{A}_{I_i}(f,g;\alpha)=\int_{I_i}fg_xdx-(\widetilde{f_\alpha}g^-)_{i+\frac12,y}+(\widetilde{f_\alpha}g^+)_{i-\frac12,y},\]
 \[\mathcal{A}_{J_j}(f,g;\alpha)=\int_{J_j}fg_ydy-(\widetilde{f_\alpha}g^-)_{x,j+\frac12}+(\widetilde{f_\alpha}g^+)_{x,j-\frac12}.\]
 
\subsection{Discrete energy law}\label{2denergygrowth}
Similar to the 1D case, we start by presenting the linear energy growth property of the exact solutions. 
\begin{theorem}[\bf{Continuous energy law}]\label{2denergypdelevel}
  Let $E,~S,~T$ be the solutions to the equation \eqref{2d} under the periodic boundary condition, and define the two-dimensional energy as $\mathcal{E}(t)=\int_J\int_I E(x,y,t)^2+S(x,y,t)^2+T(x,y,t)^2 
  dxdy$, then for any $t$, the global stochastic energy satisfies the following
energy law
\begin{eqnarray}\label{2dEL}
 \mathcal{E}(t)=\mathcal{E}(0)+2\int_0^t\int_J\int_I \lambda_2(S+T)-\lambda_1 E dW_sdxdy+(\lambda_1^2+2\lambda_2^2)Tr(Q)t,
\end{eqnarray}
and, after taking the expectation,
  \begin{eqnarray}\label{linearpdelevel}
    \mathbb{E}\Big(\mathcal{E}(t)\Big)=\mathcal{E}(0)+(\lambda_1^2+2\lambda_2^2)Tr(Q)t.
  \end{eqnarray}
\end{theorem}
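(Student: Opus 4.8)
The plan is to mirror the one-dimensional argument in the proof of Theorem \ref{1denergypdelevel}: apply It\^o's lemma to $\mathcal{E}(t)=\int_J\int_I(E^2+S^2+T^2)\,dxdy$, substitute the equations \eqref{2d}, and show that the drift contribution coming from the spatial derivatives is a pure divergence that integrates to zero under the periodic boundary condition, while the It\^o correction produces the $Tr(Q)$ term.

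First I would rewrite \eqref{2d} in differential form as $dE=(T_x-S_y)\,dt-\lambda_1\,dW_t$, $dS=-E_y\,dt+\lambda_2\,dW_t$, and $dT=E_x\,dt+\lambda_2\,dW_t$. Applying It\^o's lemma termwise to the sum of squares gives
\[
 d\mathcal{E}(t)=\int_J\int_I\big(2E\,dE+2S\,dS+2T\,dT+(dE)^2+(dS)^2+(dT)^2\big)\,dxdy,
\]
with no cross-variation terms since $\mathcal{E}$ is a sum of squares of the distinct components. Collecting the drift part, the key observation is that $2(ET_x+TE_x)=2(ET)_x$ and $2(ES_y+SE_y)=2(ES)_y$, so the $dt$ contribution from the deterministic fluxes equals $\int_J\int_I 2\big((ET)_x-(ES)_y\big)\,dxdy$, which vanishes by the divergence theorem together with the periodic boundary conditions. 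The stochastic part collects to $2\int_J\int_I\big(\lambda_2(S+T)-\lambda_1 E\big)\,dW_t\,dxdy$, which is exactly the integrand appearing in \eqref{2dEL}.

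Next I would treat the quadratic variation: since all three noise terms are driven by the same Wiener process $W_t$, one has $(dE)^2+(dS)^2+(dT)^2=(\lambda_1^2+2\lambda_2^2)(dW_t)^2$, and, exactly as in the 1D proof, the expansion \eqref{defW} together with the orthonormality of $\{e_m\}$ and $Tr(Q)=\sum_m\gamma_m$ yields $\int_J\int_I(dW_t)^2\,dxdy=Tr(Q)\,dt$. Assembling these pieces gives
\[
 d\mathcal{E}(t)=2\int_J\int_I\big(\lambda_2(S+T)-\lambda_1 E\big)\,dW_t\,dxdy+(\lambda_1^2+2\lambda_2^2)Tr(Q)\,dt;
\]
integrating from $0$ to $t$ yields \eqref{2dEL}, and taking expectations, using that the It\^o integral has zero mean, yields \eqref{linearpdelevel}.

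I do not expect a serious obstacle here, as the argument is essentially identical to the one-dimensional case. The only point requiring a little care is the bookkeeping of the drift terms, namely verifying that the mixed products $ET_x,\,TE_x,\,ES_y,\,SE_y$ recombine precisely into the divergence $(ET)_x-(ES)_y$ with the correct signs inherited from the sign convention in \eqref{2d}, and confirming that the factor $2$ multiplying $\lambda_2^2$ (in contrast to the symmetric $\lambda_1^2+\lambda_2^2$ of the 1D model) arises exactly because two of the three equations carry the $\lambda_2\,dW_t$ forcing.
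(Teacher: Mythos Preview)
Your proposal is correct and follows exactly the approach the paper intends: the paper itself omits the proof, stating only that ``it follows the same analysis as that of Theorem \ref{1denergypdelevel},'' and your argument is precisely that extension, with the divergence $(ET)_x-(ES)_y$ replacing the 1D $(u\eta)_x$ and the $(\lambda_1^2+2\lambda_2^2)$ factor arising for the reason you identify.
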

The proof is skipped here as it follows the same analysis as that of Theorem \ref{1denergypdelevel}.
%
%

\begin{theorem}[\bf{Discrete energy law}]\label{2denergy}  
  Let $E_h(\omega,x,y,t)$, $S_h(\omega,x,y,t)$ and $T_h(\omega,x,y,t)$ be the numerical solutions to the DG methods \eqref{2ddgg1} - \eqref{2ddgg3}. \\
(a): For any $t\in[0,T]$, the numerical solutions satisfy the discrete energy law
  \begin{eqnarray}\label{ccccccccxian}
      &&\norm{E_h(\omega,x,y,t)}^2+\norm{S_h(\omega,x,y,t)}^2+\norm{T_h(\omega,x,y,t)}^2 \\
      &&= 
  2\int_0^t\int_{J}\int_{I}\lambda_2(T_h+S_h)-\lambda_1E_hdW_sdxdy+\norm{E_h(x,y,0)}^2+\norm{S_h(x,y,0)}^2
+\norm{T_h(x,y,0)}^2+(\lambda_1^2+2\lambda_2^2)Kt ,
  \notag 
  \end{eqnarray}
and
\begin{align}\notag
      &\mathbb{E}\Big(\norm{E_h(x,y,t)}^2+\norm{S_h(x,y,t)}^2+\norm{T_h(x,y,t)}^2\Big)\\
   &~~~~~~~~~~~~~~~~~~~~ =\norm{E_h(x,y,0)}^2+\norm{S_h(x,y,0)}^2+\norm{T_h(x,y,0)}^2+(\lambda_1^2+2\lambda_2^2)Kt.\label{2denergyy}
  \end{align}
with 
  \begin{equation}
  K=\sum_{i,j}\sum_{l=0}^{k^2+2k} \mu_{i,j}^l\sum^\infty_{m=1} \Big(\int_{J_j}\int_{I_i}\phi_{i,j}^l\sqrt{\gamma_m}e_mdxdy\Big)^2, \label{added2d2}
  \end{equation}
where  $\{\phi^l_{i,j}, i,j=0,\cdots,k\}$ represents the set of Legendre basis over cell $J_j\times I_i$, and 
$\mu_{i,j}^l=(\int_{J_j}\int_{I_i}(\phi_{i,j}^l )^2dxdy)^{-1}$. \\
(b) The constant $K$ is bounded by $(k^2+2k)Tr(Q)$ with $k$ being the polynomial degree of the DG methods. 
Moreover, if there exists some constant $\alpha>0$ such that the series 
$\sum^\infty_{m=1}\gamma_m (K_m)^\alpha<\infty$ with $K_m=\norm{\nabla e_m}_{{\infty}}$, 
we can show that $K=Tr(Q)+O(h^\alpha)$, i.e., $K$ is an approximation of $Tr(Q)$ appearing in the continuous energy law \eqref{2dEL}-\eqref{linearpdelevel}.

\end{theorem}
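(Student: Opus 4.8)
The plan is to follow the proof of Theorem~\ref{1denergy} almost line by line; the only genuinely new feature is that two families of generalized alternating fluxes — the $\pm\alpha_1$ fluxes attached to the $x$-derivatives and the $\pm\alpha_2$ fluxes attached to the $y$-derivatives — must be handled simultaneously. The three ingredients remain: It\^o's formula for the squares of the numerical solutions, cancellation of all deterministic terms after summation over cells, and an explicit evaluation of the quadratic-variation terms through the tensor-product Legendre expansion.

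For part~(a), I would first write $d(E_h)^2=2E_h\,dE_h+(dE_h)^2$ and likewise for $S_h,T_h$, then take $\varphi=E_h$, $\psi=S_h$, $\phi=T_h$ in \eqref{2ddgg1}--\eqref{2ddgg3} and add the three identities. The stochastic part immediately yields $2\int_{J_j}\!\int_{I_i}\bigl(\lambda_2(S_h+T_h)-\lambda_1E_h\bigr)\,dW_t\,dxdy$. The drift part consists, in each cell, of the volume terms $-\int T_h(E_h)_x$, $\int S_h(E_h)_y$, $\int E_h(S_h)_y$, $-\int E_h(T_h)_x$ together with their interface terms. Integrating by parts within each cell turns the two $x$-volume terms into boundary values of $-(T_hE_h)$ at $x_{i\pm\frac12}$ (integrated in $y$), and the two $y$-volume terms into boundary values of $-(S_hE_h)$ at $y_{j\pm\frac12}$ (integrated in $x$). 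The key step — the two-dimensional analogue of the $\Theta$-identity behind \eqref{qqq1} — is then to collect at a fixed vertical interface $x_{i+\frac12}$ the contributions coming from cells $I_i$ and $I_{i+1}$ and to verify cancellation: using $-(T_hE_h)^-+(T_hE_h)^+=\{T_h\}[E_h]+[T_h]\{E_h\}$ and the pairing of $\widetilde{(T_h)_{\alpha_1}}=\{T_h\}+\alpha_1[T_h]$ with $\widetilde{(E_h)_{-\alpha_1}}=\{E_h\}-\alpha_1[E_h]$, the $\{\cdot\}[\cdot]$ terms cancel identically while the $\alpha_1[\cdot][\cdot]$ terms cancel because the two fluxes carry opposite signs of $\alpha_1$; the analogous cancellation at a horizontal interface $y_{j+\frac12}$ uses the pairing of $\widetilde{(S_h)_{-\alpha_2}}$ with $\widetilde{(E_h)_{\alpha_2}}$. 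Under the periodic boundary conditions these interface contributions sum to zero, so the entire drift part of the energy balance disappears.

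Next I would evaluate the quadratic-variation terms. Expanding $E_h|_{I_i\times J_j}=\sum_l E^l_{i,j}\phi^l_{i,j}$ and testing \eqref{2ddgg1} against $\phi^m_{i,j}$ gives a linear system $L_{i,j}\,d\mathbf{E}_{i,j}=\mathbf{A}_{i,j}\,dt-\lambda_1\int_{J_j}\!\int_{I_i}\Phi_{i,j}\,dW_t\,dxdy$ whose mass matrix $L_{i,j}$ is diagonal because $\{\phi^l_{i,j}\}$ is the tensor-product Legendre basis; hence, using $dt\cdot dt=dt\cdot dW_t=0$, one gets $\int_{J_j}\!\int_{I_i}(dE_h)^2\,dxdy=L_{i,j}\,d\mathbf{E}_{i,j}\cdot d\mathbf{E}_{i,j}=\lambda_1^2\sum_l\mu^l_{i,j}\bigl(\int_{J_j}\!\int_{I_i}\phi^l_{i,j}\,dW_t\,dxdy\bigr)^2$, and \eqref{defW} together with the independence of the $\mathcal{B}_m$ gives $\bigl(\int_{J_j}\!\int_{I_i}\phi^l_{i,j}\,dW_t\,dxdy\bigr)^2=\sum_m\bigl(\int_{J_j}\!\int_{I_i}\phi^l_{i,j}\sqrt{\gamma_m}e_m\,dxdy\bigr)^2dt$. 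The identical computation for $S_h$ and $T_h$ produces a factor $\lambda_2^2$ each, so the three contributions combine into $(\lambda_1^2+2\lambda_2^2)\sum_l\mu^l_{i,j}\sum_m(\int_{J_j}\!\int_{I_i}\phi^l_{i,j}\sqrt{\gamma_m}e_m\,dxdy)^2dt$. Summing this per-cell identity over $i,j$, integrating in time from $0$ to $t$, and recalling $\norm{\cdot}^2=\sum_{i,j}\int_{J_j}\!\int_{I_i}(\cdot)^2dxdy$ gives \eqref{ccccccccxian} with $K$ as in \eqref{added2d2}; and since the remaining It\^o integral has zero expectation, taking $\mathbb{E}$ of \eqref{ccccccccxian} gives \eqref{2denergyy}.

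For part~(b), the bound $K\le(k^2+2k)Tr(Q)$ follows from the Cauchy--Schwarz estimate of Theorem~\ref{1denergy}(b): $\bigl(\int_{J_j}\!\int_{I_i}\phi^l_{i,j}\sqrt{\gamma_m}e_m\bigr)^2\le\int_{J_j}\!\int_{I_i}(\phi^l_{i,j})^2\cdot\int_{J_j}\!\int_{I_i}\gamma_m e_m^2$ with $\mu^l_{i,j}\int_{J_j}\!\int_{I_i}(\phi^l_{i,j})^2=1$, summed over $l$ and over $m,i,j$ using $\sum_{i,j}\int_{J_j}\!\int_{I_i}e_m^2=1$. For $K=Tr(Q)+O(h^\alpha)$ I would isolate the constant mode ($\phi^0_{i,j}\equiv1$, $\mu^0_{i,j}=(h_{x,i}h_{y,j})^{-1}$), introduce the cell average $\overline{e}_m=(h_{x,i}h_{y,j})^{-1}\int_{J_j}\!\int_{I_i}e_m$, and use $\int_{J_j}\!\int_{I_i}\phi^l_{i,j}\overline{e}_m=0$ for $l\ge1$ to replace $e_m$ by $e_m-\overline{e}_m$ in every higher-mode integral; then, with $\int_{J_j}\!\int_{I_i}e_m^2=\int_{J_j}\!\int_{I_i}(e_m-\overline{e}_m)^2+(h_{x,i}h_{y,j})^{-1}(\int_{J_j}\!\int_{I_i}e_m)^2$, the difference $K-Tr(Q)$ splits into $\RNum{1}=-\sum_{i,j}\sum_m\gamma_m\int_{J_j}\!\int_{I_i}(e_m-\overline{e}_m)^2$ and a higher-mode part $\RNum{2}$ dominated by Cauchy--Schwarz by $(k^2+2k)\sum_{i,j}\sum_m\gamma_m\int_{J_j}\!\int_{I_i}(e_m-\overline{e}_m)^2$; finally, just as in one dimension, uniform boundedness of $e_m$ gives $M$ with $|e_m-\overline{e}_m|^{2-\alpha}\le M$, the estimate $|e_m-\overline{e}_m|\le Ch\norm{\nabla e_m}_\infty=ChK_m$ on each cell yields $\int_{J_j}\!\int_{I_i}(e_m-\overline{e}_m)^2\le M(ChK_m)^\alpha h_{x,i}h_{y,j}$, and summing over cells and then over $m$ (using $\sum_m\gamma_m(K_m)^\alpha<\infty$) shows $\RNum{1},\RNum{2}=O(h^\alpha)$. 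I expect the bookkeeping of the drift cancellation in part~(a) to be the only real obstacle — each direction carries its own $\pm\alpha$ flux and the volume and interface pieces from all three equations and four numerical fluxes must be assembled correctly before the cancellation becomes visible — whereas the quadratic-variation step is routine once the diagonality of the tensor-product mass matrix is exploited, and part~(b) is a direct transcription of the one-dimensional argument.
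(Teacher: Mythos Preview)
Your proposal is correct and follows essentially the same approach as the paper: It\^o's formula for the squared solutions, the choice $\varphi=E_h$, $\psi=S_h$, $\phi=T_h$ followed by summation over cells to kill the drift, the Legendre-expansion computation of the quadratic-variation terms, and for part~(b) a direct transcription of the one-dimensional argument. The only cosmetic difference is in how the drift cancellation is presented: the paper simply records the telescoping interface terms $\Theta=(\tfrac12+\alpha_1)T_h^+E_h^-+(\tfrac12-\alpha_1)E_h^+T_h^-$ and $\widetilde\Theta=(\tfrac12+\alpha_2)E_h^+S_h^-+(\tfrac12-\alpha_2)E_h^-S_h^+$ (the two-dimensional analogue of \eqref{qqq1}) and lets periodicity collapse the sum, whereas you spell out the $\{\cdot\}[\cdot]$ and $\alpha[\cdot][\cdot]$ algebra behind the $\pm\alpha_1$ and $\pm\alpha_2$ flux pairing --- these are the same computation viewed from two sides.
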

\begin{proof}
  (a):   By It\^{o}'s lemma, we know that 
  \begin{equation}\label{ito2d}
      d(E_h)^2=2E_hdE_h+(dE_h)^2,~~d(S_h)^2=2S_hdS_h+(dS_h)^2,~~d(T_h)^2=2T_hdT_h+(dT_h)^2.
  \end{equation}
By taking the test functions $\varphi=E_h$, $\psi=S_h$ and  $\phi=T_h$ in \eqref{2ddgg1}-\eqref{2ddgg3}, and adding these equations, we have
  \begin{eqnarray}\label{qqqq}
    &&\int_{J_j}\int_{I_i}(dE_h)E_h+(dS_h)S_h+(dT_h)T_hdxdy\notag\\
    &&=\int_{J_j}-(\Theta)_{i-\frac12,y}+(\Theta)_{i+\frac12,y}dydt
    +\int_{I_i}(\widetilde{\Theta})_{x,j-\frac12}-(\widetilde{\Theta})_{x,j+\frac12}dxdt\notag \\
    &&+\int_{J_j}\int_{I_i}\lambda_2(T_h+S_h)-\lambda_1E_hdW_tdxdy,
  \end{eqnarray}
  where 
  \[\Theta=\big(\frac12+\alpha_1\big)T_h^+E_h^-+\big(\frac12-\alpha_1\big)E_h^+T_h^-,
 ~~ \widetilde{\Theta}=\big(\frac12+\alpha_2\big)E_h^+S_h^-+\big(\frac12-\alpha_2\big)E_h^-S_h^+.\]
  Following the same analysis as in Theorem \ref{1denergy}, we can evaluate the terms 
we have
\begin{align}
  &\int_{J_j}\int_{I_i}(dS_h)^2dxdy= \int_{J_j}\int_{I_i}(dT_h)^2dxdy=\lambda_2^2 \sum_{l=0}^{k^2+2k} \mu_{i,j}^l\sum^\infty_{m=1} \Big(\int_{J_j}\int_{I_i}\phi_{i,j}^l\sqrt{\gamma_m}e_mdxdy\Big)^2dt, \notag \\
  &\int_{J_j}\int_{I_i}(dE_h)^2dxdy =\lambda_1^2 \sum_{l=0}^{k^2+2k} \mu_{i,j}^l\sum^\infty_{m=1} \Big(\int_{J_j}\int_{I_i}\phi_{i,j}^l\sqrt{\gamma_m}e_mdxdy\Big)^2dt,   \label{added13}
  \end{align}
By combining the equations \eqref{ito2d}-\eqref{added13}, summing over all the cells and integrating in time from $0$ to $t$, 
we can obtain 
  \begin{eqnarray}\label{cccccccc}
      &&\norm{E_h(x,y,t)}^2+\norm{S_h(x,y,t)}^2+\norm{T_h(x,y,t)}^2 \\
      &&= 
  2\int_0^t\int_{J}\int_{I}\lambda_2(T_h+S_h)-\lambda_1E_hdW_sdxdy+\norm{E_h(x,y,0)}^2+\norm{S_h(x,y,0)}^2+\norm{T_h(x,y,0)}^2+(\lambda_1^2+2\lambda_2^2)Kt. \notag
  \end{eqnarray}
Note that  $\mathbb{E}\Big(\int_0^t\int_{J}\int_{I}\lambda_2(T_h+S_h)-\lambda_1E_hdW_sdxdy\Big)=0$ since it is an 
It\^{o} integral, therefore taking expectation of \eqref{cccccccc} leads to the discrete energy law \eqref{2denergyy}.

 (b): The estimate of the constant $K$ follows an exact same analysis as in the proof of Theorem \ref{1denergy} (b), that is, 
 \[K\le (k^2+2k)Tr(Q), \]
  and if  $\sum^\infty_{m=1}\gamma_m (K_m)^\alpha <\infty$, we can show that 
    \[K-Tr(Q)=O(h^\alpha).\]
If $W_t$ is the standard Brownian motion, it can be shown that $K= Tr(Q)$ (see Remark \ref{remark1}), 
which means that the two-dimensional continuous energy law \eqref{2dEL} is exactly preserved by the proposed method. 
\end{proof}

\subsection{Optimal error estimate}
In this section we study the convergence rate of the DG scheme \eqref{2ddgg1}-\eqref{2ddgg3}. 
We start with defining the generalized Radau projection in $\mathbb{R}^2$ as 
\begin{eqnarray}\label{2dproj}
   \mathbb{P}^\alpha_x=\mathcal{P}^\alpha_x \otimes \mathcal{P}_y,~\mathbb{P}^\alpha_y=\mathcal{P}_x \otimes \mathcal{P}^\alpha_y,~
 \mathbb{P}^{\alpha,\beta}= \mathcal{P}^\alpha_x \otimes \mathcal{P}^\beta_y,
\end{eqnarray}
where $\mathcal{P} $ is the $L^2$ projection. 
The following lemmas are provided  in \cite{XM 2016} and will be useful in our analysis.
\begin{lemma}[\bf{Superconvergence property}]\label{superconvergence}
For any function $w$, denote $\epsilon=\mathbb{P}^{\alpha,\beta}w-w$, with $\mathbb{P}^{\alpha,\beta}$ being the projection defined in \eqref{2dproj}. For any $v\in \mathbb{V}_h^k$, there exists some constant $C$ such that 
    \[\Bigg|\sum_{i,j}\int_{J_j}\mathcal{A}_{I_i}(\epsilon,v,\alpha)dy\Bigg|\le 
    Ch^{k+1}\norm{v},~~~
  \Bigg|\sum_{i,j}\int_{I_i}\mathcal{A}_{J_j}(\epsilon,v,\alpha)dx\Bigg|\le Ch^{k+1}\norm{v}.\]
 \end{lemma}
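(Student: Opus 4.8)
The plan is to peel off the purely $x$-directional part of $\epsilon$, which the generalized Radau projection in $x$ annihilates \emph{exactly} against the operator $\mathcal{A}_{I_i}(\cdot,\cdot;\alpha)$, and to show that the remaining transverse part is already of size $O(h^{k+1})$. Recall that on an interval $I_i$ the generalized Radau projection $\mathcal{P}^\alpha$ (with $\alpha\neq0$) is characterized by $\int_{I_i}(\mathcal{P}^\alpha g-g)\rho\,dx=0$ for all $\rho\in P^{k-1}(I_i)$ together with $(\{\mathcal{P}^\alpha g\}+\alpha[\mathcal{P}^\alpha g])_{i+\frac12}=g(x_{i+\frac12})$. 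From these two properties one obtains, for any $p\in P^k(I_i)$ and any $g$ continuous across the cell interfaces, the exact identity $\mathcal{A}_{I_i}(\mathcal{P}^\alpha g-g,\,p;\alpha)=0$: the volume term vanishes since $p_x\in P^{k-1}(I_i)$, and each trace term vanishes since $\{\mathcal{P}^\alpha g-g\}+\alpha[\mathcal{P}^\alpha g-g]=0$ at the cell interfaces. I will also use the standard facts that $\mathcal{P}^\alpha$ is $L^2$-stable with an $h$-independent constant and satisfies the optimal estimates $\|\mathcal{P}^\alpha g-g\|\le Ch^{k+1}\|g\|_{H^{k+1}}$, $\|(\mathcal{P}^\alpha g-g)_x\|\le Ch^{k}\|g\|_{H^{k+1}}$, and $|[\mathcal{P}^\alpha g]_{i+\frac12}|\le Ch^{k+1}\|g\|_{W^{k+1,\infty}}$, all of which hold, with the obvious changes, for $\mathcal{P}^\beta$ in $y$.

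First I would write $\epsilon=\mathbb{P}^{\alpha,\beta}w-w=\epsilon_x+\zeta$, where $\epsilon_x=(\mathcal{P}^\alpha_x\otimes I)w-w$ and $\zeta=(\mathcal{P}^\alpha_x\otimes(\mathcal{P}^\beta_y-I))w$. For the first piece, fix $y$: the restriction $\epsilon_x(\cdot,y)|_{I_i}$ equals $\mathcal{P}^\alpha g-g$ with $g=w(\cdot,y)$, i.e. the one-dimensional generalized Radau error in $x$, and $v(\cdot,y)|_{I_i}\in P^k(I_i)$, so the exact identity above gives $\mathcal{A}_{I_i}(\epsilon_x,v;\alpha)=0$ for every $y$ and every $i,j$; integrating in $y$ and summing yields $\sum_{i,j}\int_{J_j}\mathcal{A}_{I_i}(\epsilon_x,v;\alpha)\,dy=0$. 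This is precisely where superconvergence comes from: the entire $O(h^k)$-size $x$-projection error contributes nothing.

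It then remains to estimate $\sum_{i,j}\int_{J_j}\mathcal{A}_{I_i}(\zeta,v;\alpha)\,dy$. Summation by parts over $i$ (using the periodic boundary conditions) turns this into $-\sum_{i,j}\int_{J_j}\int_{I_i}\zeta_x v\,dx\,dy+\sum_{i,j}\int_{J_j}[\zeta]_{i+\frac12,y}\,(\alpha[v]-\{v\})_{i+\frac12,y}\,dy$. For the volume term I bound it by $\|\zeta_x\|\,\|v\|$ and observe that $\zeta_x=\big(I\otimes(\mathcal{P}^\beta_y-I)\big)\big[(\partial_x\mathcal{P}^\alpha_x\otimes I)w\big]$; since $\partial_x\mathcal{P}^\alpha_x w=\partial_x w+O(h^k)$ it is bounded in $L^2$ and inherits the $y$-smoothness of $w$, so the outer $(\mathcal{P}^\beta_y-I)$ supplies the full factor $h^{k+1}$ and $\|\zeta_x\|\le Ch^{k+1}\|w\|_{H^{k+2}}$. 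For the interface term, Cauchy--Schwarz splits it into a $v$-factor, which is $\le Ch^{-1/2}\|v\|$ by the trace and inverse inequalities, and a $\zeta$-jump factor: here $[\zeta]_{i+\frac12,\cdot}=(\mathcal{P}^\beta_y-I)\big([\mathcal{P}^\alpha_x w]_{i+\frac12}\big)$, and since the $y$-function $[\mathcal{P}^\alpha_x w]_{i+\frac12}$ is smooth and, together with its $y$-derivatives, uniformly $O(h^{k+1})$, one gets $\|[\zeta]_{i+\frac12,\cdot}\|_{L^2(J)}\le Ch^{k+2}$ per interface and hence $\big(\sum_i\|[\zeta]_{i+\frac12,\cdot}\|_{L^2(J)}^2\big)^{1/2}\le Ch^{k+3/2}$ over the $O(h^{-1})$ interfaces; the product is $\le Ch^{k+1}\|v\|$. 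Combining the two pieces proves the first inequality, and the second follows by the identical argument with $x$ and $y$ (and $\alpha$ and $\beta$) interchanged. The only delicate point --- and the only place where the bound could degrade to $O(h^k)$ --- is the $\zeta$-volume term: a naive $\|\zeta\|\,\|v_x\|$ estimate loses a power of $h$, so one must integrate by parts to put the $x$-derivative on the benign $\mathcal{P}^\alpha_x$-factor and thereby retain the $h^{k+1}$ smallness carried by the transverse $y$-projection error.
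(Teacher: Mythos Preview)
The paper itself does not prove this lemma: it simply cites Meng, Shu and Wu \cite{XM 2016}. Your argument is correct and is in the spirit of that reference, so there is no genuine gap.

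That said, your decomposition $\epsilon=\epsilon_x+\zeta$ with $\zeta=\mathcal{P}^\alpha_x(\mathcal{P}^\beta_y-I)w$ is slightly less efficient than the standard one. Because $\zeta$ is an $x$-projection, it inherits jumps in $x$, and this forces you to estimate the interface term $\sum_i[\zeta]_{i+\frac12}(\alpha[v]-\{v\})$ directly; your bound $\|[\zeta]_{i+\frac12,\cdot}\|_{L^2(J)}\le Ch^{k+2}$ is valid but relies on pointwise jump estimates for $\mathcal{P}^\alpha_x$ and hence on $W^{k+2,\infty}$-type regularity of $w$. The cleaner route is to write instead
\[
\epsilon \;=\; (\mathcal{P}^\alpha_x-I)\bigl(\mathcal{P}^\beta_y w\bigr) \;+\; (\mathcal{P}^\beta_y-I)w .
\]
The first piece is of the form $(\mathcal{P}^\alpha_x-I)g$ with $g=\mathcal{P}^\beta_y w$ continuous in $x$, so (exactly as in your treatment of $\epsilon_x$) it is annihilated by $\mathcal{A}_{I_i}(\cdot,v;\alpha)$ for every $i$. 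The second piece, $\eta_y:=(\mathcal{P}^\beta_y-I)w$, is itself continuous in $x$; after one integration by parts in $x$ the interface contributions cancel identically and one is left with
\[
\sum_{i,j}\int_{J_j}\mathcal{A}_{I_i}(\epsilon,v;\alpha)\,dy
\;=\;-\int_J\!\!\int_I \partial_x\eta_y\, v\,dx\,dy
\;=\;-\int_J\!\!\int_I (\mathcal{P}^\beta_y-I)w_x\, v\,dx\,dy,
\]
which is bounded by $Ch^{k+1}\|w_x\|_{H^{k+1}_y}\|v\|$ using only $L^2$-based approximation properties of $\mathcal{P}^\beta_y$. This is the argument in \cite{XM 2016}; it buys you a cleaner regularity requirement ($H^{k+2}$ rather than $W^{k+2,\infty}$) and eliminates the interface estimate altogether. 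Your proof and this one share the same core idea (peel off the $x$-Radau error), differing only in which factor of the tensor product you keep inside the exact-cancellation piece.
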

 \begin{lemma}[\bf{Projection error}]
   Let $\Pi$ be any projection defined in \eqref{2dproj}. For any function $w(x,y)$, there exists some constant $C$, such that 
   \[\norm{\Pi w-w}\le Ch^{k+1}.\]
 \end{lemma}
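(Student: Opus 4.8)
The plan is to reduce the two-dimensional estimate to the one-dimensional projection error bound stated just above together with the standard $O(h^{k+1})$ approximation property of the one-dimensional $L^2$ projection $\mathcal{P}$, via a tensor-product telescoping argument. I would treat the general case $\Pi=\mathbb{P}^{\alpha,\beta}=\mathcal{P}^\alpha_x\otimes\mathcal{P}^\beta_y$; the cases $\mathbb{P}^\alpha_x=\mathcal{P}^\alpha_x\otimes\mathcal{P}_y$ and $\mathbb{P}^\alpha_y=\mathcal{P}_x\otimes\mathcal{P}^\alpha_y$ follow verbatim, since the $L^2$ projection $\mathcal{P}$ enjoys the same $O(h^{k+1})$ approximation property and $L^2$-stability as the generalized Radau projection $\mathcal{P}^\alpha$.

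First I would write the error as a telescoping sum
\[
\mathbb{P}^{\alpha,\beta}w-w=\big(\mathcal{P}^\alpha_x-I\big)\big(\mathcal{P}^\beta_y w\big)+\big(\mathcal{P}^\beta_y-I\big)w,
\]
so that $\norm{\mathbb{P}^{\alpha,\beta}w-w}\le\norm{(\mathcal{P}^\alpha_x-I)(\mathcal{P}^\beta_y w)}+\norm{(\mathcal{P}^\beta_y-I)w}$. For the second term I would freeze $x$ and apply the one-dimensional projection error estimate in the $y$-variable on each slice $w(x,\cdot)$, then integrate in $x$, obtaining $\norm{(\mathcal{P}^\beta_y-I)w}^2=\int_I\norm{(\mathcal{P}^\beta_y-I)w(x,\cdot)}^2_{L^2(J)}\,dx\le Ch^{2k+2}\int_I\norm{w(x,\cdot)}^2_{H^{k+1}(J)}\,dx\le Ch^{2k+2}\norm{w}^2_{H^{k+1}}$.

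For the first term I would likewise freeze $y$ and apply the one-dimensional estimate in $x$ to the function $\mathcal{P}^\beta_y w$, producing a bound of the form $Ch^{k+1}$ times a norm of $\mathcal{P}^\beta_y w$ involving up to $k+1$ derivatives in $x$. The key observation is that $\mathcal{P}^\beta_y$ acts only in the $y$-variable and therefore commutes with $\partial_x^s$, so $\partial_x^s(\mathcal{P}^\beta_y w)=\mathcal{P}^\beta_y(\partial_x^s w)$; combined with the uniform (in $h$) $L^2$-stability of $\mathcal{P}^\beta_y$, this controls the $x$-derivatives of $\mathcal{P}^\beta_y w$ by those of $w$, giving $\norm{(\mathcal{P}^\alpha_x-I)(\mathcal{P}^\beta_y w)}\le Ch^{k+1}\norm{w}_{H^{k+1}}$. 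Adding the two contributions yields $\norm{\Pi w-w}\le Ch^{k+1}$.

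The main obstacle is making the commutation/stability step rigorous: one needs that the one-dimensional generalized Radau projection (and the $L^2$ projection) is bounded on $L^2$ with a constant independent of the mesh size, so that projecting in one direction does not destroy the Sobolev regularity required by the error estimate in the other direction. I would invoke this uniform stability from \cite{XM 2016,SX2021}; granted that, the remainder is a routine tensor-product argument, and the sub-cases with one factor equal to the plain $L^2$ projection are immediate.
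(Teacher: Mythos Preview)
The paper does not provide its own proof of this lemma; it is stated without proof and attributed to \cite{XM 2016}. Your tensor-product telescoping argument is correct and is the standard route to such estimates, so there is nothing to contrast.
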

 
 Now we turn to the main result on the error estimate of the DG methods.  
\begin{theorem}[\bf{Optimal error estimate}] \label{2derror}
Let $E_h, S_h, T_h\in\mathbb{V}_h^k$ be the numerical solutions given by the DG scheme \eqref{2ddgg1} - \eqref{2ddgg3},
and $E, T, S\in L^2(\Omega\times[0,T];H^{k+2})$ are strong solutions to \eqref{2d}. With the initial conditions chosen as 
\[E_h(x,y,0)=\mathbb{P}^{-\alpha_1,\alpha_2}E(x,y,0);~ S_h(x,y,0)=\mathbb{P}^{-\alpha_2}_yS(x,y,0);~ T_h(x,y,0)=\mathbb{P}_x^{\alpha_1}T(x,y,0),\]
there holds the following error estimates
\begin{equation}\label{2derroreq}
\norm{E-E_h}^2+\norm{S-S_h}^2+\norm{T-T_h}^2\le Ch^{2k+2},
\end{equation}
where the constant C denotes a generic positive constant independent of the spatial step sizes h.
\end{theorem}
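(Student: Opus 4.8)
The plan is to adapt the argument used for the one-dimensional result, Theorem~\ref{error estimate}, the only genuinely new ingredient being the superconvergence estimate of Lemma~\ref{superconvergence}, which replaces the exact orthogonality that was available on intervals. First I would set up the error equations: since the exact solutions $E,S,T$ also satisfy the weak formulation \eqref{2ddgg1}--\eqref{2ddgg3} (the exact, single-valued traces coincide with their own numerical fluxes, and the additive-noise terms are identical on both sides), subtracting the scheme from the exact weak form gives error equations of the same structure but with the $\lambda_i\,dW_t$ terms cancelled and $E-E_h$, $S-S_h$, $T-T_h$ in place of the numerical unknowns. Then, using the three generalized Radau projections prescribed in the statement, I decompose
\[
 E-E_h=\xi^E-\epsilon^E,\qquad S-S_h=\xi^S-\epsilon^S,\qquad T-T_h=\xi^T-\epsilon^T,
\]
with $\xi^E=\mathbb{P}^{-\alpha_1,\alpha_2}E-E_h$, $\epsilon^E=\mathbb{P}^{-\alpha_1,\alpha_2}E-E$, and analogously $\xi^S=\mathbb{P}_y^{-\alpha_2}S-S_h$, $\xi^T=\mathbb{P}_x^{\alpha_1}T-T_h$. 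The point of these choices is that, direction by direction, the projection parameter attached to a component matches the flux parameter with which that component appears in \eqref{2ddgg1}--\eqref{2ddgg3} (for instance $\epsilon^E$ carries parameter $-\alpha_1$ in $x$ and $\alpha_2$ in $y$, exactly the parameters of the fluxes $\widetilde{(E_h)_{-\alpha_1}}$ and $\widetilde{(E_h)_{\alpha_2}}$), which is precisely what makes every $\epsilon$-term below fit the hypothesis of Lemma~\ref{superconvergence}.

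I would then take $\varphi=\xi^E$, $\psi=\xi^S$, $\phi=\xi^T$ in the three error equations and add them. The contributions bilinear in the $\xi$'s reorganize, after integration by parts, into pure interface functionals of the form $\Theta_{i\pm\frac12,y}$ and $\widetilde\Theta_{x,j\pm\frac12}$ --- exactly as in the proof of the discrete energy law, Theorem~\ref{2denergy} --- which telescope to zero under the periodic boundary condition. The contributions bilinear in $\epsilon$ and $\xi$ are precisely of the form $\sum_{i,j}\int_{J_j}\mathcal{A}_{I_i}(\epsilon,\xi,\cdot)\,dy$ and $\sum_{i,j}\int_{I_i}\mathcal{A}_{J_j}(\epsilon,\xi,\cdot)\,dx$, so Lemma~\ref{superconvergence} bounds them by $Ch^{k+1}\big(\norm{\xi^E}+\norm{\xi^S}+\norm{\xi^T}\big)$, using $E,S,T\in H^{k+2}$. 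Applying It\^o's formula to $\norm{\xi^E}^2+\norm{\xi^S}^2+\norm{\xi^T}^2$ produces the quadratic-variation terms $\int(d\xi^E)^2$, $\int(d\xi^S)^2$, $\int(d\xi^T)^2$; mimicking \eqref{added10}--\eqref{added11}, I would subtract the projected evolution equation from the scheme, test against $d\xi^E$ (respectively $d\xi^S$, $d\xi^T$), note that all drift terms drop out of the quadratic variation, and obtain $\int(d\xi^E)^2\,dxdy\le C\norm{\mathbb{P}^{-\alpha_1,\alpha_2}(dW_t)-dW_t}^2$ together with its analogues for $\mathbb{P}_y^{-\alpha_2}$ and $\mathbb{P}_x^{\alpha_1}$. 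Since $dW_t=\sum_m\sqrt{\gamma_m}e_m\,d\mathcal{B}_m$ with $e_m\in H^{k+1}$ and $Tr(Q)<\infty$, the projection-error estimate gives $\int_0^t\norm{\mathbb{P}(dW_s)-dW_s}^2\le Ch^{2k+2}$.

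Assembling these bounds, integrating in time, and using $\xi^E(\cdot,0)=\xi^S(\cdot,0)=\xi^T(\cdot,0)=0$ (which holds because the initial data were projected by the matching projections), I arrive at
\[
 \norm{\xi^E(t)}^2+\norm{\xi^S(t)}^2+\norm{\xi^T(t)}^2\le \int_0^t\!\big(\norm{\xi^E(s)}^2+\norm{\xi^S(s)}^2+\norm{\xi^T(s)}^2\big)\,ds+Ch^{2k+2},
\]
where the terms $\int_0^t\!\int d\epsilon\cdot\xi$ have been absorbed with Young's inequality and the projection-error bound (their martingale parts controlled via the It\^o isometry). Gronwall's inequality then yields $\norm{\xi^E(t)}^2+\norm{\xi^S(t)}^2+\norm{\xi^T(t)}^2\le Ch^{2k+2}$, and the triangle inequality with $\norm{\epsilon^E},\norm{\epsilon^S},\norm{\epsilon^T}\le Ch^{k+1}$ gives \eqref{2derroreq}.

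I expect the main obstacle to be the bookkeeping of the cross terms after the three error equations are tested and added: one must verify that every $\xi$-$\xi$ interface term really telescopes under periodicity, and that every $\epsilon$-$\xi$ term is in the exact shape --- correct spatial direction, correct flux parameter --- required by Lemma~\ref{superconvergence}. This is where the specific pairing of projections with flux parameters in the statement is used in an essential way, since on the tensor-product space $\mathbb{V}_h^k=Q^k$ the one-dimensional identity $\int_{I_j}\epsilon\,v_x\,dx=0$ no longer holds and the superconvergence lemma, rather than a direct cancellation, must carry this part of the estimate. A secondary technical point is the careful handling of the stochastic terms, namely bounding the quadratic variations $\int(d\xi)^2$ by $\norm{\mathbb{P}(dW_t)-dW_t}^2$ and estimating the It\^o integrals arising from $d\epsilon$.
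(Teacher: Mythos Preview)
Your proposal is correct and follows the same route as the paper's proof. One small refinement worth noting: the paper observes that the $\epsilon^T$- and $\epsilon^S$-cross terms in the first error equation vanish \emph{exactly} by the one-directional Radau property (since $\mathbb{P}_x^{\alpha_1}$ and $\mathbb{P}_y^{-\alpha_2}$ are Radau in precisely the direction of the operator $\mathcal{A}_{I_i}(\cdot,\cdot;\alpha_1)$ resp.\ $\mathcal{A}_{J_j}(\cdot,\cdot;-\alpha_2)$, combined with the $L^2$ projection in the other variable), so Lemma~\ref{superconvergence} is invoked only for the two surviving $\epsilon^E$-terms coming from the tensor projection $\mathbb{P}^{-\alpha_1,\alpha_2}$---this matters because Lemma~\ref{superconvergence} as stated applies to $\mathbb{P}^{\alpha,\beta}$, not to $\mathbb{P}_x^{\alpha}$ or $\mathbb{P}_y^{\beta}$.
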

\begin{proof}
Since both the numerical and exact solutions satisfy the equations \eqref{2ddgg1} - \eqref{2ddgg3},  we have the error equations
\begin{eqnarray}
   \int_{J_j}\int_{I_i}d(E-E_h)\varphi dxdy&=&-\int_{J_j}\mathcal{A}_{I_i}(T-T_h,\varphi;\alpha_1)
   dydt 
   +\int_{I_i}\mathcal{A}_{J_j}(S-S_h,\varphi;-\alpha_2)dxdt,
  \label{2ddgge1}\\
   \int_{J_j}\int_{I_i}d(S-S_h)\psi dxdy &=&
   \int_{I_i}\mathcal{A}_{J_j}(E-E_h,\psi;\alpha_2)dxdt , 
     \label{2ddgge2}\\
     \int_{J_j}\int_{I_i} d(T-T_h) \phi dxdy
     &=& -\int_{J_j}\mathcal{A}_{I_i}(E-E_h,\phi;-\alpha_1)dydt. 
     \label{2ddgge3}
 \end{eqnarray} 
 Let 
\begin{align*}
&\xi^E=\mathbb{P}^{-\alpha_1,\alpha_2}E-E_h, \quad \xi^S=\mathbb{P}_y^{-\alpha_2}S-S_h, \quad
  \xi^T=\mathbb{P}_x^{\alpha_1}T-T_h,  \\
&\epsilon^E=\mathbb{P}^{-\alpha_1,\alpha_2}E-E, \quad
 \epsilon^S=\mathbb{P}_y^{-\alpha_2}S-S, \quad \epsilon^T=\mathbb{P}_x^{\alpha_1}T-T,  
 \end{align*}
which leads to the decomposition of the error into two terms as
\[E-E_h=\xi^E-\epsilon^E,~~S-S_h=\xi^S-\epsilon^S,~~T-T_h=\xi^T-\epsilon^T.\]

By choosing the test functions as $\varphi=\xi^E,~\psi=\xi^S,~\phi=\xi^T$ in \eqref{2ddgge1}-\eqref{2ddgge3}, and noting that 
\[\int_{J_j}\mathcal{A}_{I_i}(\epsilon^T,\xi^E;\alpha_1)dydt=\int_{I_i}\mathcal{A}_{J_j}(\epsilon^S,\xi^E;-\alpha_2)dxdt=0\] 
by the definition of the projections, we have
\begin{eqnarray}
 &&\int_{J_j}\int_{I_i}d\xi^E\xi^E - d\epsilon^E\xi^Edxdy
 =-\int_{J_j}\mathcal{A}_{I_i}(\xi^T,\xi^E;\alpha_1)dydt
+\int_{I_i}\mathcal{A}_{J_j}(\xi^S,\xi^E;-\alpha_2) dxdt, \label{2derror1}\\
 && \int_{J_j}\int_{I_i}d\xi^S\xi^S - d\epsilon^S\xi^Sdxdy
  =\int_{I_i}\mathcal{A}_{J_j}(\xi^E,\xi^S;\alpha_2)dxdt
  -\int_{I_i}\mathcal{A}_{J_j}(\epsilon^E,\xi^S;\alpha_2)dxdt,   \label{2derror2}\\
 && \int_{J_j}\int_{I_i}d\xi^T\xi^T - d\epsilon^T\xi^T 
  dxdy=
  -\int_{J_j} \mathcal{A}_{I_i}(\xi^E,\xi^T;-\alpha_1)dydt
  +\int_{J_j}\mathcal{A}_{I_i}(\epsilon^E,\xi^T;-\alpha_1)dydt.  \label{2derror3}
\end{eqnarray}
Summing up \eqref{2derror1}-\eqref{2derror3} and using integration by parts, we 
obtain 
\begin{align}
    &\int_{J_j}\int_{I_i}d\xi^E\xi^E+d\xi^S\xi^S+d\xi^T\xi^T dxdy~~~~~~~~~~~~~~~~~~~~~~~~~~  \notag \\
    &=    \int_{J_j}\int_{I_i}d\epsilon^E\xi^E+d\epsilon^S\xi^S+d\epsilon^T\xi^T    dxdy
    -\int_{J_j}\Bigg(\Pi_{i-\frac12,y}-\Pi_{i+\frac12,y}\Bigg)
    dydt+\int_{I_i}\Bigg(\bar{\Pi}_{x,j-\frac12}
    -\bar{\Pi}_{x,j+\frac12}\Bigg)dxdt	\notag \\
    &\quad-\int_{I_i}\mathcal{A}_{J_j}(\epsilon^E,\xi^S;\alpha_2)dxdt
  +\int_{J_j}\mathcal{A}_{I_i}(\epsilon^E,\xi^T;-\alpha_1)dydt, \label{2dqqqq}
\end{align}
where 
\[\Pi=\big(\frac12+\alpha_1\big)(\xi^T)^+(\xi^E)^-+\big(\frac12-\alpha_1\big)(\xi^T)^-(\xi^E)^+,~~ 
\bar{\Pi}=\big(\frac12+\alpha_2\big)(\xi^S)^-(\xi^E)^++ \big(\frac12-\alpha_2\big)(\xi^S)^+(\xi^E)^-.\]

By It\^{o}'s lemma, we have
\[d(\xi^E)^2=2d\xi^E\xi^E+d(\xi^E)^2,~~d(\xi^S)^2=2d\xi^S\xi^S+d(\xi^S)^2,~~d(\xi^T)^2=2d\xi^T\xi^T+d(\xi^T)^2.\]
Following the exact same analysis as shown in the proof of Theorem \ref{error estimate}, we have
\[\int_{J_j}\int_{I_i}d(\xi^E)^2dxdy\le C\int_{J_j}\int_{I_i}(\mathbb{P}^{-\alpha_1,\alpha_2}(dW_t)-dW_t)^2dxdy,\]
\[\int_{J_j}\int_{I_i}d(\xi^S)^2dxdy\le C\int_{J_j}\int_{I_i}(\mathbb{P}^{-\alpha_2}_y(dW_t)-dW_t)^2dxdy,\]
\[\int_{J_j}\int_{I_i}d(\xi^T)^2dxdy\le C\int_{J_j}\int_{I_i}(\mathbb{P}^{\alpha_1}_x(dW_t)-dW_t)^2dxdy.\]
Therefore, by summing over all the cells $I_i\times J_j$ in \eqref{2dqqqq} and applying the superconvergence property in Lemma \ref{superconvergence}, we obtain 
\begin{align}\label{2dwwwww}
  \begin{aligned}
  &\frac12\int_J\int_Id(\xi^E)^2+d(\xi^S)^2+(d\xi^T)dxdy\\
  &=\frac12\int_{J}\int_{I}(d\xi^E)^2+(d\xi^S)^2+(d\xi^T)^2dxdy
  + \int_{J}\int_{I}d\xi^E\xi^E+d\xi^S\xi^S+d\xi^T\xi^T dxdy\\
  &  \le  \int_{J}\int_{I}d\epsilon^E\xi^E+d\epsilon^S\xi^S+d\epsilon^T\xi^T 
    dxdy+Ch^{k+1}\big(\norm{\xi^T}+\norm{\xi^S}\big)dt\\
      &+C\big(\norm{\mathbb{P}^{-\alpha_1,\alpha_2}(dW_t)-dW_t}^2
      +\norm{\mathbb{P}^{\alpha_1}_x(dW_t)-dW_t}^2+\norm{\mathbb{P}^{-\alpha_2}_y(dW_t)-dW_t}^2).
    \end{aligned}
\end{align}
Integrating \eqref{2dwwwww} over $t$,  applying the projection error property and Young's inequality, we can show that 
\[\norm{\xi^E}^2+\norm{\xi^S}^2+\norm{\xi^T}^2\le C\int_0^t \norm{\xi^E(x,y,s)}^2+\norm{\xi^S(x,y,s)}^2+\norm{\xi^T(x,y,s)}^2ds+ Ch^{2k+2}.\]
Applying the Gronwall's inequality and combining with the optimal projection error yields the desired optimal error estimate \eqref{2derroreq}.
\end{proof}

\subsection{Multi-symplectic structure }

Similar to the 1D case, to rewrite the two-dimensional stochastic Maxwell equations \eqref{2d} in the multi-symplectic form, we introduce the new variables such that 
\[u_t=E,~v_t=S,~w_t=T,~P=T-\frac12u_x,~Q=S+\frac12u_y,~R=E-\frac12w_x+\frac12v_y.\]
and the system \eqref{2d} becomes
\begin{equation}\label{2dre}
  \begin{cases}
-\frac12u_x=P-T,\\
\frac12u_y=Q-S,\\
-\frac12w_x+\frac12v_y=R-E,\\
-dP+\frac12E_xdt=-\lambda_2dW_t,\\
-dQ-\frac12E_ydt=-\lambda_2dW_t,\\
-dR+\frac12T_xdt-\frac12S_ydt=\lambda_1dW_t,\\
dw=Tdt,\\
dv=Sdt,\\
du=Edt.
\end{cases}
\end{equation}
We set $z=(T,S,E,w,v,u,P,Q,R)^T$, and define 
\[M=\left(                 
  \begin{array}{ccccccccc}   
0&0&0&0&0&0&0&0&0\\
0&0&0&0&0&0&0&0&0\\
0&0&0&0&0&0&0&0&0\\
0&0&0&0&0&0&-1&0&0\\
0&0&0&0&0&0&0&-1&0\\
0&0&0&0&0&0&0&0&-1\\
0&0&0&1&0&0&0&0&0\\
0&0&0&0&1&0&0&0&0\\
0&0&0&0&0&1&0&0&0\\
  \end{array}
\right),\]
\[ K_1= \left(                 
  \begin{array}{ccccccccc}   
0&0&0&0&0&-\frac12&0&0&0\\
0&0&0&0&0&0&0&0&0\\
0&0&0&-\frac12&0&0&0&0&0\\
0&0&\frac12&0&0&0&0&0&0\\
0&0&0&0&0&0&0&0&0\\
\frac12&0&0&0&0&0&0&0&0\\
0&0&0&0&0&0&0&0&0\\
0&0&0&0&0&0&0&0&0\\
0&0&0&0&0&0&0&0&0\\
  \end{array}
\right), \qquad K_2=\left(                 
  \begin{array}{ccccccccc}   
0&0&0&0&0&0&0&0&0\\
0&0&0&0&0&\frac12&0&0&0\\
0&0&0&0&\frac12&0&0&0&0\\
0&0&0&0&0&0&0&0&0\\
0&0&-\frac12&0&0&0&0&0&0\\
0&-\frac12&0&0&0&0&0&0&0\\
0&0&0&0&0&0&0&0&0\\
0&0&0&0&0&0&0&0&0\\
0&0&0&0&0&0&0&0&0\\
  \end{array}
\right),\]
\[S_1=PT+QS+RE-\frac{T^2+S^2+E^2}{2},~S_2=\lambda_1u-\lambda_2(w+v), \]
therefore, the two-dimensional stochastic Maxwell equations \eqref{2d} can be rewritten in the following multi-symplectic system
\begin{equation}\label{2dmulti}
  Mdz+K_1z_xdt+K_2z_ydt=\nabla S_1(z)dt+\nabla S_2(z)dW_t.
\end{equation}
Its variational equation takes the form
\begin{equation}\label{2dvar}
  MdZ+K_1Z_xdt+K_2Z_ydt=\nabla^2S_1(z)Zdt.
\end{equation}
Let $U,~V$ be any solution to the variational equation \eqref{2dvar}, and define $\omega=MU\cdot V, ~\kappa_x=K_1U\cdot V,~\kappa_y=K_2U\cdot V$, then we can derive the multi-symplectic conservation law given by
\[d\omega +\kappa_xdt+\kappa_ydt=0.\]

Since the new system \eqref{2dre} is equivalent to the original model \eqref{2d}, we can rewrite the proposed DG methods \eqref{2ddgg1}-\eqref{2ddgg3} into a consistent formulation for the system \eqref{2dre}. 
For $E_h,~S_h,~T_h$ defined in \eqref{2ddgg1} - \eqref{2ddgg3}, find $w_h,~v_h,~u_h\in\mathbb{V}_h^k$, such that for all 
$\psi,~\bar{\psi},~\widetilde{\psi}\in\mathbb{V}_h^k$, it holds that 
\begin{align}
  &\int_{J_j}\int_{I_i}dw_h\psi dxdy=\int_{J_j}\int_{I_i}T_h\psi dxdy\, dt,\label{wh}\\
    &\int_{J_j}\int_{I_i}dv_h\bar{\psi }dxdy=\int_{J_j}\int_{I_i}S_h\bar{\psi }dxdy\, dt,\label{vh}\\
      &\int_{J_j}\int_{I_i}du_h\widetilde{\psi }dxdy=\int_{J_j}\int_{I_i}E_h\widetilde{\psi }dxdy\, dt.\label{uh}
\end{align}
Next, find $P_h,~Q_h,~R_h\in\mathbb{V}_h^k$ such that for any $\varphi,~\widetilde{\varphi},~\bar{\varphi}\in\mathbb{V}_h^k$, it holds that
\begin{eqnarray}
  &&\int_{J_j}\int_{I_i}(P_h-T_h)\varphi dxdy=\frac12\int_{J_j}\Bigg(\int_{I_i}u_h\varphi_xdx-(\widehat{u}_h\varphi^-)_{i+\frac12,y}
   +(\widehat{u}_h\varphi^+)_{i-\frac12,y}\Bigg)dy, \label{P}\\
      &&\int_{J_j}\int_{I_i}(Q_h-S_h)\bar{\varphi}dxdy=-\frac12\int_{I_i}\Bigg(\int_{J_j}u_h\bar{\varphi}_ydy-(\widehat{u}_h\bar{\varphi}^-)_{x,j+\frac12}
   +(\widehat{u}_h{\bar{\varphi}}^+)_{x,j-\frac12}\Bigg)dx, \label{Q}\\
    &&\int_{J_j}\int_{I_i}(R_h-E_h)\widetilde{\varphi} dxdy=\frac12\int_{J_j}\Bigg(\int_{I_i}w_h\widetilde{\varphi}_xdx-(\widehat{w}_h\widetilde{\varphi}^-)_{i+\frac12,y}
   +(\widehat{w}_h\widetilde{\varphi}^+)_{i-\frac12,y}\Bigg)dy\notag\\
   && ~~~~~~~~~~~~~~~~~~~~~~~~~~~~~~~~~-\frac12\int_{I_i}\Bigg(\int_{J_j}v_h\widetilde{\varphi}_ydy-(\widehat{v}_h\widetilde{\varphi}^-)_{x,j+\frac12}
   +(\widehat{v}_h\widetilde{\varphi}^+)_{x,j-\frac12}\Bigg)dx,\label{R}
\end{eqnarray}
where the numerical fluxes are chosen as 
\[(\widehat{u}_h)_{x_0,y}=(\{u_h\}-2m_1[u_h])_{x_0,y},~(\widehat{w}_h)_{x_0,y}=(\{w_h\}-2n_1[w_h])_{x_0,y},~~m_n,~n_2\in\mathbb{R} ~\text{with}~m_1-n_1=\alpha_1,\]
\[(\widehat{u}_h)_{x,y_0}=(\{u_h\}+2m_2[u_h])_{x,y_0},~(\widehat{v}_h)_{x,y_0}=(\{v_h\}+2n_2[u_h])_{x,y_0},~~m_n,~n_2\in\mathbb{R} ~\text{with}~m_2-n_2=\alpha_2.\]
By combining the derivative of \eqref{P}-\eqref{R} with the equations \eqref{2ddgg3}-\eqref{2ddgg1}, we obtain
\begin{align} \notag
&\int_{J_j}\int_{I_i}dP_h\phi dxdy=-\frac12\int_{J_j}\Bigg(\int_{I_i}E_h\phi_xdx-(\widehat{E}_h\phi^-)_{i+\frac12,y}
   +(\widehat{E}_h\phi^+)_{i-\frac12,y}\Bigg)dy dt
   +\lambda_2 
   \int_{J_j}\int_{I_i}\phi dW_tdxdy, \\
   &\int_{J_j}\int_{I_i}dQ_h\bar{\phi}dxdy=\frac12\int_{I_i}\Bigg(\int_{J_j}E_h\bar{\phi}_ydy-(\widehat{E}_h\bar{\phi}^-)_{x,j+\frac12}
   +(\widehat{E}_h{\bar{\phi}}^+)_{x,j-\frac12}\Bigg)dxdt+\lambda_2 
   \int_{J_j}\int_{I_i}\bar{\phi} dW_tdxdy, \notag \\
   &\int_{J_j}\int_{I_i}dR_h\widetilde{\phi}dxdy=-\frac12\int_{J_j}\Bigg(\int_{I_i}T_h\widetilde{\phi}_xdx-(\widehat{T}_h\widetilde{\phi}^-)_{i+\frac12,y}
   +(\widehat{T}_h\widetilde{\phi}^+)_{i-\frac12,y}\Bigg)dy \label{PQR3}\\
   & ~~~~~~~~~~~~~~~~~~~~~~~+\frac12\int_{I_i}\Bigg(\int_{J_j}S_h\widetilde{\phi}_ydy-(\widehat{S}_h\widetilde{\phi}^-)_{x,j+\frac12}
   +(\widehat{S}_h\widetilde{\phi}^+)_{x,j-\frac12}\Bigg)dx-\lambda_1
   \int_{J_j}\int_{I_i}\widetilde{\phi} dW_tdxdy,  \notag
\end{align}
where the numerical fluxes are  
\[(\widehat{T}_h)_{x_0,y}=(\{T_h\}+2m_1[T_h])_{x_0,y},~~(\widehat{E}_h)_{x_0,y}=(\{E_h\}+2n_1[E_h])_{x_0,y},\]
\[(\widehat{E}_h)_{x,y_0}=(\{E_h\}-2n_2[E_h])_{x,y_0},~~(\widehat{S}_h)_{x,y_0}=(\{S_h\}-2m_2[S_h])_{x,y_0}.\]
Combining \eqref{wh}-\eqref{PQR3}, we have derived the equivalent formulation of the DG scheme \eqref{2ddgg1}-\eqref{2ddgg3} for the new system \eqref{2dmulti}: 
find $z_h\in 
(\mathbb{V}^k_h)^9$, such that for all $\bm{\varphi}\in(\mathbb{V}^k_h)^9$, we have
\begin{eqnarray}
  &&\int_{J_j}\int_{I_i}Mdz_h\cdot \bm{\varphi} dxdy-\int_{J_j}\Big(\int_{I_i}K_1z_h\cdot\bm{\varphi}_xdx
  -\big(\widehat{K_1z_h}\cdot\bm{\varphi}^-\big)_{i+\frac12,y}+
  \big(\widehat{K_1z_h}\cdot\bm{\varphi}^+\big)_{i-\frac12,y}\Big)dydt \notag \\
  &&-\int_{I_i}\Big(\int_{J_j}K_2z_h\cdot\bm{\varphi}_ydy
  -\big(\widehat{K_2z_h}\cdot\bm{\varphi}^-\big)_{x,j+\frac12}+
  \big(\widehat{K_2z_h}\cdot\bm{\varphi}^+\big)_{x,j-\frac12}\Big)dxdt\notag \\
  &&=\int_{J_j}\int_{I_i}\nabla S_1(z_h)\cdot\bm{\varphi} dxdydt+\int_{J_j}\int_{I_i}\nabla S_2(z_h)\cdot\bm{\varphi} dW_tdxdy, \label{2ddg}
\end{eqnarray}
where  $(\widehat{K_1z_h})_{x_0,y}=(K_1\{z_h\}+A_1[z_h])_{x_0,y}$ , and 
$(\widehat{K_2z_h})_{x,y_0}=(K_2\{z_h\}+A_2[z_h])_{x,y_0}$, 
  \[A_1=\left(                 
  \begin{array}{ccccccccc}   
0&0&0&0&0&m_1&0&0&0\\
0&0&0&0&0&0&0&0&0\\
0&0&0&n_1&0&0&0&0&0\\
0&0&n_1&0&0&0&0&0&0\\
0&0&0&0&0&0&0&0&0\\
m_1&0&0&0&0&0&0&0&0\\
0&0&0&0&0&0&0&0&0\\
0&0&0&0&0&0&0&0&0\\
0&0&0&0&0&0&0&0&0\\
  \end{array}\right), \qquad A_2=\left(                 
  \begin{array}{ccccccccc}   
0&0&0&0&0&0&0&0&0\\
0&0&0&0&0&m_2&0&0&0\\
0&0&0&0&n_2&0&0&0&0\\
0&0&0&0&0&0&0&0&0\\
0&0&n_2&0&0&0&0&0&0\\
0&m_2&0&0&0&0&0&0&0\\
0&0&0&0&0&0&0&0&0\\
0&0&0&0&0&0&0&0&0\\
0&0&0&0&0&0&0&0&0\\
  \end{array}
\right),\]
Applying the exterior derivative to this scheme \eqref{2ddg} leads to the variation equation 
\begin{eqnarray}
  &&\int_{J_j}\int_{I_i}MdZ_h\cdot \bm{\varphi} dxdy-\int_{J_j}\Big(\int_{I_i}K_1Z_h\cdot\bm{\varphi} _xdx
  -\big(\widehat{K_1Z_h}\cdot\bm{\varphi} ^-\big)_{i+\frac12,y}+
  \big(\widehat{K_1Z_h}\cdot\bm{\varphi} ^+\big)_{i-\frac12,y}\Big)dydt \label{2ddgvar} \\
  &&-\int_{I_i}\Big(\int_{J_j}K_2z_h\cdot\bm{\varphi} _ydy
  -\big(\widehat{K_2Z_h}\cdot\bm{\varphi} ^-\big)_{x,j+\frac12}+
  \big(\widehat{K_2Z_h}\cdot\bm{\varphi} ^+\big)_{x,j-\frac12}\Big)dxdt=\int_{J_j}\int_{I_i}\nabla^2S_1Z_h\cdot\bm{\varphi}  dxdydt, \notag  
\end{eqnarray}  

\begin{theorem} [\bf{Conservation of multi-symplecticity}]\label{2dconservmulti}
      Let  $U_h, V_h$ be any solutions to the variational equation \eqref{2ddgvar}, we have the semi-discrete version of 
    the multi-symplectic conservation laws
    \begin{eqnarray}
      \int_{J_j}\int_{I_i} d(MU_h\cdot V_h)dx-\int_{J_j}\Big(\mathcal{F}_{K_1}(U_h,V_h)_{i+\frac12,y}
      -\mathcal{F}_{K_1}(U_h,V_h)_{i-\frac12,y}\Big)dydt \notag \\
      -\int_{I_i}\Big(\mathcal{F}_{K_2}(U_h,V_h)_{x,j+\frac12}
      -\mathcal{F}_{K_2}(U_h,V_h)_{x,j-\frac12}\Big)dxdt=0. \label{2ddismul}
    \end{eqnarray}
\end{theorem}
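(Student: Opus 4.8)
The plan is to follow the one-dimensional argument of Theorem \ref{conservmulti}, now carrying the extra transverse integration and splitting the spatial contribution into an $x$-part and a $y$-part. By It\^o's lemma,
\[d(MU_h\cdot V_h)=MdU_h\cdot V_h+MU_h\cdot dV_h+MdU_h\cdot dV_h.\]
As in the one-dimensional proof, the quadratic-variation term vanishes: the variational equation \eqref{2ddgvar} carries no stochastic forcing (the noise term of \eqref{2ddg} is linear in $z_h$, so its exterior derivative contributes only a drift), hence setting $Z_h=U_h$ and taking the test function $\bm{\varphi}=dV_h$ in \eqref{2ddgvar} shows that $\int_{J_j}\int_{I_i}MdU_h\cdot dV_h\,dxdy$ is a drift term and therefore zero. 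Using the anti-symmetry of $M$ to rewrite $MU_h\cdot dV_h=-MdV_h\cdot U_h$, the left side of \eqref{2ddismul} reduces to $\int_{J_j}\int_{I_i}(MdU_h\cdot V_h-MdV_h\cdot U_h)\,dxdy$.

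Next I would insert the variational equation \eqref{2ddgvar} twice, once with $(Z_h,\bm{\varphi})=(U_h,V_h)$ and once with $(Z_h,\bm{\varphi})=(V_h,U_h)$, and subtract. Since $\nabla^2S_1$ is symmetric the two Hessian integrals cancel, and what survives is a sum of an $x$-part built from $K_1$ and the fluxes $\widehat{K_1z_h}=K_1\{z_h\}+A_1[z_h]$ on the vertical interfaces, and a $y$-part built from $K_2$ and $\widehat{K_2z_h}=K_2\{z_h\}+A_2[z_h]$ on the horizontal interfaces; there are no genuine cross terms since each $K_\ell$ couples only to the $x_\ell$-derivative and the $x_\ell$-interfaces. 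For the $x$-part I would use that $K_1$ is anti-symmetric, so that
\[\int_{I_i}\big(K_1U_h\cdot(V_h)_x-K_1V_h\cdot(U_h)_x\big)\,dx=\int_{I_i}\partial_x\big(K_1U_h\cdot V_h\big)\,dx=(K_1U_h^-\cdot V_h^-)_{i+\frac12}-(K_1U_h^+\cdot V_h^+)_{i-\frac12},\]
turning the volume term into interface values; the identical manipulation with $K_2$ handles the $y$-part.

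The remaining step is to collapse the interface contributions into the claimed $\mathcal{F}_{K_1}$ and $\mathcal{F}_{K_2}$ jumps. For this I would establish the two-dimensional analogue of Lemma \ref{lemma1}: for $\ell\in\{1,2\}$ and any $U_h,V_h\in\mathbb{V}_h^k$,
\[K_\ell U_h^-\cdot V_h^- -\widehat{K_\ell U_h}\cdot V_h^- +\widehat{K_\ell V_h}\cdot U_h^- =K_\ell U_h^+\cdot V_h^+ -\widehat{K_\ell U_h}\cdot V_h^+ +\widehat{K_\ell V_h}\cdot U_h^+ =\mathcal{F}_{K_\ell}(U_h,V_h),\]
with $\mathcal{F}_{K_\ell}(U_h,V_h)=\{K_\ell U_h\cdot V_h\}-\widehat{K_\ell U_h}\cdot\{V_h\}+\widehat{K_\ell V_h}\cdot\{U_h\}$. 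Writing $u^\pm=\{U_h\}\pm\frac12[U_h]$ and similarly for $V_h$ at an interface, a direct expansion reduces both identities to the vanishing of $K_\ell\{V_h\}\cdot[U_h]+K_\ell[U_h]\cdot\{V_h\}$ and of $A_\ell[V_h]\cdot[U_h]-A_\ell[U_h]\cdot[V_h]$, which hold because $K_1,K_2$ are anti-symmetric and $A_1,A_2$ are symmetric (inspection of the matrices above). Granting this lemma in each direction and collecting the four interface contributions of the cell $I_i\times J_j$ yields \eqref{2ddismul}. I expect the main obstacle to be bookkeeping rather than conceptual: keeping the vertical-interface and horizontal-interface contributions cleanly separated while checking that the mixed products $\widehat{K_1U_h}\cdot V_h^\pm$, $\widehat{K_2U_h}\cdot V_h^\pm$ recombine with the right signs, and verifying the anti-symmetry/symmetry of $K_1,K_2,A_1,A_2$ against the sign conventions in those matrices.
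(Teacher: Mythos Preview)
Your proposal is correct and matches the paper's approach: the paper omits the proof entirely, stating only that it ``follows the same idea as that of Theorem \ref{conservmulti},'' and your argument is precisely that extension, carrying the one-dimensional manipulation through in each direction separately and invoking the obvious two-dimensional analogue of Lemma \ref{lemma1} (which holds because $K_1,K_2$ are anti-symmetric and $A_1,A_2$ are symmetric, as you correctly verify).
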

The proof follows the same idea as that of Theorem \ref{conservmulti}, and is skipped here to save space.

\section{Symplectic time discretization}\label{time}
\setcounter{equation}{0} \setcounter{figure}{0}\setcounter{table}{0}

In this section, we discuss the symplectic temporal discretization for the semi-discrete DG methods presented in the previous section. 

In the one-dimensional case, we can set $\eta_h|_{I_j}=\sum^k_{l=1}\eta_j^l\varphi_j^l$, 
~$u_h|_{I_j}=\sum^k_{l=1}u_j^l\varphi_j^l$, with the set $\{\varphi_j^l\}$ being the basis of $V_h^k$, and introduce the notations
\[p=(\eta_1^0,\cdots,\eta_1^k,\eta_2^0,\cdots,\eta_2^k,\cdots,\eta_J^k)^T,
\quad q=(u_1^0,\cdots,u_1^k,u_2^0,\cdots,u_2^k,\cdots,u_J^k)^T.\]
In two-dimensional case, consider 
\[E_h=\sum^k_{l=1} E_{i,j}^l\varphi_{i,j}^l,\quad  S_h=\sum^k_{l=1} S_{i,j}^l\varphi_{i,j}^l, \quad T_h=\sum^k_{l=1} T_{i,j}^l\varphi_{i,j}^l,\]
where $\{\varphi_{i,j}^l\}$ is the basis of $\mathbb{V}_h^k$.  Define 
\begin{align*}
&E_{i,j}=(E_{i,j}^0, E_{i,j}^1, \cdots, E_{i,j}^{k^2+2k})^T, \\
&\bm{E}_h= \left(E_{1,1}, E_{2,1}, \cdots, E_{I,1}, E_{1,2}, \cdots, E_{I,J} \right)^T, 
\end{align*}
and similarly for $\bm{S}_h$, $\bm{T}_h$, and then introduce the notations 
\[p=\bm{E}_h, \qquad q=\left( \bm{S}_h , ~ \bm{T}_h\right)^T.\]
With these notations, either the one-dimensional scheme \eqref{phi}-\eqref{phii} or the one-dimensional method \eqref{2ddgg1}-\eqref{2ddgg3} can be simplified into the following stochastic differential equation: 
\begin{align} \label{tq}
\begin{aligned}
  dp=Aq+Ld\mathcal{B}_t, \\
  dq=Bp+Nd\mathcal{B}_t, 
\end{aligned}  
\end{align}
where $A,~B,~L,~N$ are some constant matrices which may take different values in 1D or 2D setting, and $\mathcal{B}_t$ is a Brownian motion. The system \eqref{tq} will be considered in this section.

\subsection{Symplectic Euler}
We let $0=t_0\leq t_1 \leq \cdots \leq t_N=T$ be a partition of the time interval $[0,T]$. By setting $\tau =t_{k+1}-t_k$, and $\Delta \mathcal{B}=\mathcal{B}_{t_{k+1}}-\mathcal{B}_{t_k}$, the symplectic Euler methods for the system \eqref{tq} are given by 
\begin{align}\label{Eq}
 & p^{k+1}=p^k+\tau Aq^k+L\Delta \mathcal{B},  \quad
 & q^{k+1}=q^k+\tau Bp^{k+1}+N\Delta \mathcal{B}. 
\end{align}
Following the studies in \cite{MRT2002}, we have the following result:
\begin{theorem}
  Symplectic Euler method \eqref{Eq} preserves the symplectic structure, and has the first mean-square order of 
convergence.
\end{theorem}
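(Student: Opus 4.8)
The plan is to prove the two claims — preservation of the symplectic structure and first mean-square order of convergence — separately, both by reducing the statement to the general framework of stochastic symplectic Runge--Kutta methods in \cite{MRT2002}. First I would observe that the system \eqref{tq} is a linear stochastic Hamiltonian system: writing $z=(p,q)^T$, one can express the drift as $z\mapsto \mathbb{J}^{-1}\nabla H(z)$ for the quadratic Hamiltonian $H(p,q)=-\tfrac12 p^TB p - \tfrac12 q^T(-A)q$ (after checking the compatibility $A=A^T$, $B=B^T$ forced by the DG mass-matrix structure established in Section~\ref{msltc1d}), with additive noise coefficients $L,N$. The key point is that \eqref{Eq} is exactly the partitioned Euler (symplectic Euler) scheme applied to this system, so the symplecticity assertion follows from the general theory once the Hamiltonian structure is in place.

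For the symplecticity, I would compute the Jacobian of the one-step map $(p^k,q^k)\mapsto(p^{k+1},q^{k+1})$ defined by \eqref{Eq}. Since the noise is additive, $\Delta\mathcal{B}$ contributes nothing to the Jacobian, so the differential one-step map coincides with that of the deterministic symplectic Euler method for $\dot p = Aq$, $\dot q = Bp$. I would then verify directly that this Jacobian $\mathcal{M}$ satisfies $\mathcal{M}^T\mathbb{J}\,\mathcal{M}=\mathbb{J}$ with $\mathbb{J}=\begin{pmatrix}0&I\\-I&0\end{pmatrix}$ — this is a short block computation using the triangular (Gauss--Seidel) structure of \eqref{Eq}, in which $q^{k+1}$ is updated using the already-updated $p^{k+1}$. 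Equivalently one invokes the known fact that the symplectic Euler method is symplectic for any separable Hamiltonian and notes that additive noise does not destroy this, as in \cite{MRT2002}.

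For the convergence order, I would appeal to the mean-square convergence theorem for stochastic one-step methods (the fundamental theorem of Milstein-type analysis): it suffices to bound the local one-step error in the first and second moments, showing the one-step mean error is $O(\tau^2)$ and the one-step mean-square error is $O(\tau^{3/2})$, which then yields global mean-square order $1/2$... but in fact, because the noise here is \emph{additive}, the diffusion coefficients $L,N$ are constant, the It\^o--Stratonovich correction vanishes, and the scheme captures the stochastic increment exactly, so the standard estimates upgrade the global mean-square order to $1$; this is precisely the situation analyzed in \cite{MRT2002}. I would carry this out by Taylor-expanding the exact solution of the linear system \eqref{tq} over one step, comparing term by term with \eqref{Eq}, and using the linearity to get clean moment bounds; the constants depend only on $\|A\|,\|B\|,\|L\|,\|N\|$ and $T$.

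The main obstacle I anticipate is not the convergence estimate — which is routine for linear additive-noise systems — but rather the bookkeeping needed to certify that \eqref{tq} genuinely has the claimed Hamiltonian/symplectic structure, i.e.\ that the matrices $A$ and $B$ coming from the DG spatial discretization are symmetric (or that $(A,B)$ together define a canonical flow). This requires tracing back through the semi-discrete formulations \eqref{phi}--\eqref{phii} and \eqref{2ddgg1}--\eqref{2ddgg3} and using the multi-symplecticity results of Theorems~\ref{conservmulti} and \ref{2dconservmulti} together with a summation (quadrature over all cells) of the discrete multi-symplectic conservation law to pass from the multi-symplectic structure in $(x,t)$ to an ODE symplectic structure in $t$ alone. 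Once that structural fact is nailed down, both conclusions follow from \cite{MRT2002} with only short verifications.
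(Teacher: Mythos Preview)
The paper offers no proof of this theorem at all: it simply writes ``Following the studies in \cite{MRT2002}, we have the following result'' and states the theorem. Your proposal therefore already goes well beyond the paper, and your overall strategy --- reduce both claims to the framework of \cite{MRT2002} --- is exactly the paper's (implicit) approach.

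On the convergence claim your outline is correct and in fact more than the paper supplies: for a linear SDE with additive noise the semi-implicit Euler step \eqref{Eq} has one-step mean error $O(\tau^2)$ and one-step mean-square error $O(\tau^{3/2})$, giving global mean-square order $1$ via Theorem~\ref{onesteporder}. This does not require any Hamiltonian structure.

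On the symplecticity claim there is a genuine gap, and it is precisely the obstacle you flag at the end --- but your proposed resolution does not quite work. Your parenthetical assertion that ``$A=A^T$, $B=B^T$ [is] forced by the DG mass-matrix structure'' is false: the energy identity in the proof of Theorem~\ref{1denergy} (see \eqref{qqq1}) shows that the two DG bilinear forms with fluxes $\{u_h\}+\alpha[u_h]$ and $\{\eta_h\}-\alpha[\eta_h]$ are \emph{adjoints up to sign}, which at the matrix level gives $B=-M_{\text{mass}}^{-1}A_1^T$ with $A_1$ generically non-symmetric for $\alpha\neq 0$. Hence \eqref{tq} is not a canonical separable Hamiltonian system in $(p,q)$, and your Jacobian computation would not close. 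Summing the multi-symplectic conservation law \eqref{dismul} over cells does yield a conserved two-form, but it lives on the \emph{extended} phase space $(u_h,\eta_h,v_h,\zeta_h,P_h,Q_h)$ coming from \eqref{DG1}, not on the $(p,q)$ coordinates of \eqref{tq}; passing from one to the other is not the bookkeeping exercise you suggest. The paper does not address this either --- it simply defers to \cite{MRT2002} --- so this is a looseness shared by the paper rather than an error unique to your proposal. If you want a clean statement, it is safest to claim symplecticity only for the augmented system \eqref{DG1}, or to exhibit explicitly the (non-canonical) two-form on $(p,q)$ that \eqref{Eq} preserves.
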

\subsection{Partitioned Runge-Kutta method}
Consider the two-stage PRK methods \cite{MRT2002} for the system \eqref{tq} of the form
\begin{eqnarray}
  \begin{aligned}\label{prk}
    &\mathcal{Q}_1=q^k+N\Big(J_k+\frac{1}{\sqrt{2}}\Delta \mathcal{B}\Big)\\
    &\mathcal{P}_1=p^k+\frac14\tau A\mathcal{Q}_1
    +L\Big(J_k+\frac{1}{2\sqrt3}\Delta \mathcal{B}\Big),\\
    &\mathcal{Q}_2=q^k+\frac23\tau B\mathcal{P}_1+
    N\Big(J_k-\frac{1}{3\sqrt2}\Delta \mathcal{B}\Big),\\
    &\mathcal{P}_2=p^k+\tau \Big(\frac14A\mathcal{Q}_1+\frac34 A\mathcal{Q}_2\Big)
    +L\Big(J_k-\frac{1}{\sqrt3}\Delta\mathcal{B}\Big),\\
    &p^{k+1}=p^k+L\Delta\mathcal{B}+\tau \Big(\frac14 A\mathcal{Q}_1+\frac34 
    A\mathcal{Q}_2\Big)\\
    &q^{k+1}=q^k+N\Delta\mathcal{B}+\tau  
    \Big(\frac23B\mathcal{P}_1+\frac13B\mathcal{P}_2\Big),
  \end{aligned}
\end{eqnarray}
  where 
  $$\tau =t_{k+1}-t_k, \qquad \Delta \mathcal{B}=\mathcal{B}_{t_{k+1}}-\mathcal{B}_{t_k}, \qquad J_k=\frac{1}{\tau}\int_{t_k}^{t_{k+1}}\mathcal{B}_s-\mathcal{B}_{t_k} ds.$$

In order to analyze the convergence rate of the PRK methods \eqref{prk}, we recall the following theorem in \cite[Theorem 1.1]{NM 1995}.
\begin{theorem}\label{onesteporder}
Consider a general system of stochastic differential equation
   \begin{equation}\label{SDE}
  dX=a(t,X)dt+\sum^m_{r=1}b_r(t,X)d\mathcal{B}_r(t),
\end{equation}
where $X$, $a$ and $b_r$ are column vectors defined on $t\in [t_0, T]$, and $\mathcal{B}_r$ are independent 
standard Brownian motions, and let $\overline{X}_{t,x}(t+\tau)$ be a one-step 
approximation satisfying that for any $t_0\le t\le T-\tau$, 
\begin{align}\label{th11}
 &\Bigg|\mathbb{E}\Big(X_{t,x}(t+\tau)-\overline{X}_{t,x}(t+\tau)\Big)\Bigg|\le K(1+|x|^2)^{1/2}\tau^{p_1}, \\
 &\Bigg(\mathbb{E}\Big|X_{t,x}(t+\tau)-\overline{X}_{t,x}(t+\tau)\Big|^2\Bigg)^{1/2}\le 
   K(1+|x|^2)^{1/2}\tau^{p_2}. \label{th22}
\end{align}
Let $p_2\ge\frac12,~~p_1\ge p_2+\frac12$, then for any $N$ and $k=0,1,\cdots,N$, 
the following result holds:
\begin{eqnarray}\label{thcon}
     \Bigg(\mathbb{E}\Big|X_{t_0,X_0}(t_k)-\overline{X}_{t_0,X_0}(t_k)\Big|^2\Bigg)^{1/2}\le 
   K(1+\mathbb{E}|X_0|^2)^{1/2}\tau^{p_2-\frac12}.
\end{eqnarray}
\end{theorem}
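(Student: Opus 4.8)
The plan is to run the classical \emph{fundamental mean-square convergence theorem} of Milstein: upgrade the two one-step estimates \eqref{th11}--\eqref{th22} into a recursion for the global mean-square error and then close with a discrete Gronwall inequality. Throughout I would assume (as is standard for \eqref{SDE}) that $a$ and the $b_r$ are globally Lipschitz with linear growth, so that the exact flow $X_{t,x}(\cdot)$ depends Lipschitz-continuously on the initial datum in mean square, and I would first record a uniform moment bound $1+\mathbb{E}\big|\overline X_{t_0,X_0}(t_k)\big|^2\le C$ for $t_k\le T$, which follows from \eqref{th22}, the triangle inequality, and the moment bounds for the exact solution. Write $X_k=X_{t_0,X_0}(t_k)$, $\overline X_k=\overline X_{t_0,X_0}(t_k)$, $R_k=X_k-\overline X_k$, $e_k=\mathbb{E}|R_k|^2$, with $R_0=0$. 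Since $X_k$ and $\overline X_k$ are $\mathcal{F}_{t_k}$-measurable, I split the error at one step as
\[
R_{k+1}=\underbrace{\big(X_{t_k,X_k}(t_{k+1})-X_{t_k,\overline X_k}(t_{k+1})\big)}_{\textstyle \bar R_{k+1}}\;+\;\underbrace{\big(X_{t_k,\overline X_k}(t_{k+1})-\overline X_{t_k,\overline X_k}(t_{k+1})\big)}_{\textstyle \rho_{k+1}},
\]
so $\bar R_{k+1}$ transports the accumulated error through the exact flow while $\rho_{k+1}$ is exactly the one-step error issued from the numerical point $\overline X_k$; applying \eqref{th11}--\eqref{th22} conditionally (with $\overline X_k$ in the role of $x$) gives $|\mathbb{E}(\rho_{k+1}\mid\mathcal{F}_{t_k})|\le K(1+|\overline X_k|^2)^{1/2}\tau^{p_1}$ and $\mathbb{E}(|\rho_{k+1}|^2\mid\mathcal{F}_{t_k})\le K^2(1+|\overline X_k|^2)\tau^{2p_2}$ a.s.

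Next I would expand $e_{k+1}=\mathbb{E}|\bar R_{k+1}|^2+2\,\mathbb{E}\big(\bar R_{k+1}\cdot\rho_{k+1}\big)+\mathbb{E}|\rho_{k+1}|^2$ and bound the three terms. For the first, mean-square Lipschitz dependence of the flow gives $\mathbb{E}(|\bar R_{k+1}|^2\mid\mathcal{F}_{t_k})\le(1+C\tau)|R_k|^2$, hence $\mathbb{E}|\bar R_{k+1}|^2\le(1+C\tau)e_k$, and similarly $\mathbb{E}|\bar R_{k+1}-R_k|^2\le C\tau\,e_k$ via the Itô isometry. The last term is $O(\tau^{2p_2})$ by the conditional second-moment estimate and the uniform moment bound. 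The cross term is the delicate one: I write $\rho_{k+1}=\mathbb{E}(\rho_{k+1}\mid\mathcal{F}_{t_k})+\widetilde\rho_{k+1}$ with $\widetilde\rho_{k+1}$ conditionally centered, and $\bar R_{k+1}=R_k+(\bar R_{k+1}-R_k)$; then $\mathbb{E}(R_k\cdot\widetilde\rho_{k+1})=0$ because $R_k$ is $\mathcal{F}_{t_k}$-measurable, and the surviving pieces $\mathbb{E}\big((\bar R_{k+1}-R_k)\cdot\widetilde\rho_{k+1}\big)$ and $\mathbb{E}\big(\bar R_{k+1}\cdot\mathbb{E}(\rho_{k+1}\mid\mathcal{F}_{t_k})\big)$ are both bounded, by Cauchy--Schwarz and the estimates above, by $C\sqrt{e_k}\,\tau^{p_2+1/2}$. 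This is exactly where the hypothesis $p_1\ge p_2+\tfrac12$ enters (to downgrade $\tau^{p_1}$ to $\tau^{p_2+1/2}$) and where the $O(\sqrt\tau)$ smallness of $\bar R_{k+1}-R_k$ is used so as not to lose a half power of $\tau$.

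Combining the three estimates and using Young's inequality $2\sqrt{e_k}\,\tau^{p_2+1/2}\le\tau e_k+\tau^{2p_2}$ yields the recursion $e_{k+1}\le(1+C_1\tau)e_k+C_2\tau^{2p_2}$ with $e_0=0$; the discrete Gronwall lemma then gives $e_k\le C_2\tau^{2p_2}\,\dfrac{(1+C_1\tau)^k-1}{C_1\tau}\le C_3\,\tau^{2p_2-1}$ uniformly for $t_k\le T$, which after taking square roots is precisely \eqref{thcon}. The condition $p_2\ge\tfrac12$ serves to keep the exponent $2p_2-1$ nonnegative (so the estimate is meaningful as $\tau\to0$) and to legitimize the Young step.

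The main obstacle is the cross term: controlling $\mathbb{E}(\bar R_{k+1}\cdot\rho_{k+1})$ at order $\tau^{p_2+1/2}$ forces the conditional-centering decomposition of $\rho_{k+1}$ together with the splitting $\bar R_{k+1}=R_k+(\bar R_{k+1}-R_k)$, and it also relies on the preliminary — and separately argued — fact that the numerical second moments are bounded uniformly in $k$. Once these two points are secured, the remainder is routine bookkeeping with the Lipschitz and Itô-isometry estimates associated to \eqref{SDE}.
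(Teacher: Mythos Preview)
Your sketch is the standard proof of Milstein's fundamental mean-square convergence theorem and is correct in outline. Note, however, that the paper does not prove this statement at all: it is merely \emph{recalled} from \cite[Theorem~1.1]{NM 1995} and quoted as a tool for the subsequent analysis of the PRK method. So there is no ``paper's own proof'' to compare against; you have supplied the argument that the paper delegates to the reference. Your decomposition $R_{k+1}=\bar R_{k+1}+\rho_{k+1}$, the conditional-centering treatment of the cross term, and the closing discrete Gronwall step are exactly Milstein's strategy, so in that sense your approach coincides with the cited source rather than differing from it.
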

The following Lemma is a direct result of Theorem \ref{onesteporder}, and will be useful in our analysis.
\begin{lemma}\label{orderlemma}
  Let the one-step approximation $\overline{X}_{t,x}(t+\tau)$ satisfies the 
  conditions of Theorem \ref{onesteporder}, and suppose that another one step 
  approximation $\widetilde{X}_{t,x}(t+\tau)$ satisfies 
  \begin{align}\label{lem11}
   & \Bigg|\mathbb{E}\Big(\widetilde{X}_{t,x}(t+\tau)-\overline{X}_{t,x}(t+\tau)\Big)\Bigg|
    =O(\tau^{p_1}), \\
    & \Bigg(\mathbb{E}\Big|\widetilde{X}_{t,x}(t+\tau)-\overline{X}_{t,x}(t+\tau)\Big|^2\Bigg)^{1/2}
    =O(\tau^{p_2}), \label{lem22}
  \end{align}
  with the same $p_1$ and $p_2$ as in Theorem \ref{onesteporder}, then the mean-square order of accuracy for $\widetilde{X}_{t,x}(t+\tau)$ 
is also $p_2-1/2$, same as $\overline{X}_{t,x}(t+\tau)$.
\end{lemma}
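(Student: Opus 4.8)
The plan is to show that the perturbed one-step map $\widetilde{X}_{t,x}(t+\tau)$ satisfies the same pair of one-step estimates \eqref{th11}--\eqref{th22} that $\overline{X}_{t,x}(t+\tau)$ does, with the identical exponents $p_1$ and $p_2$. Once this is verified, Theorem \ref{onesteporder} applies verbatim to $\widetilde{X}_{t,x}$ and yields the global mean-square bound \eqref{thcon} with rate $\tau^{p_2-\frac12}$, which is exactly the assertion.

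First I would estimate the weak (mean) one-step error. Writing $X_{t,x}(t+\tau)-\widetilde{X}_{t,x}(t+\tau)=\big(X_{t,x}(t+\tau)-\overline{X}_{t,x}(t+\tau)\big)+\big(\overline{X}_{t,x}(t+\tau)-\widetilde{X}_{t,x}(t+\tau)\big)$, taking expectations and using the triangle inequality together with \eqref{th11} for $\overline{X}_{t,x}$ and the hypothesis \eqref{lem11} for the perturbation, one gets
\[\Big|\mathbb{E}\big(X_{t,x}(t+\tau)-\widetilde{X}_{t,x}(t+\tau)\big)\Big|\le K(1+|x|^2)^{1/2}\tau^{p_1}+C\tau^{p_1}\le \widetilde{K}(1+|x|^2)^{1/2}\tau^{p_1},\]
where the last step absorbs the additive constant using $(1+|x|^2)^{1/2}\ge 1$. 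The mean-square estimate is handled the same way, with Minkowski's inequality in $L^2(\Omega)$ replacing the scalar triangle inequality:
\[\Big(\mathbb{E}\big|X_{t,x}(t+\tau)-\widetilde{X}_{t,x}(t+\tau)\big|^2\Big)^{1/2}\le \Big(\mathbb{E}\big|X_{t,x}(t+\tau)-\overline{X}_{t,x}(t+\tau)\big|^2\Big)^{1/2}+\Big(\mathbb{E}\big|\overline{X}_{t,x}(t+\tau)-\widetilde{X}_{t,x}(t+\tau)\big|^2\Big)^{1/2},\]
and then \eqref{th22} and \eqref{lem22} give the bound $\widetilde{K}(1+|x|^2)^{1/2}\tau^{p_2}$ after the same absorption. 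Thus $\widetilde{X}_{t,x}$ satisfies \eqref{th11}--\eqref{th22} with the same $p_1,p_2$, which by assumption obey $p_2\ge\frac12$ and $p_1\ge p_2+\frac12$; invoking Theorem \ref{onesteporder} for $\widetilde{X}_{t,x}$ concludes the proof.

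The only point requiring a little care is the absorption step: it is legitimate precisely because the implied constants in \eqref{lem11}--\eqref{lem22} are independent of the base point $x$ (or at worst grow no faster than $(1+|x|^2)^{1/2}$), which is the intended reading of the hypothesis and is the situation that occurs when the lemma is applied to compare the PRK scheme \eqref{prk} with a reference second-order method. Beyond this bookkeeping the argument is purely the triangle and Minkowski inequalities followed by a citation of Theorem \ref{onesteporder}, so I do not anticipate any substantive obstacle.
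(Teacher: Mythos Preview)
Your proposal is correct and is precisely the verification the paper has in mind: the paper does not write out a proof but states the lemma ``is a direct result of Theorem \ref{onesteporder}'', and your triangle/Minkowski argument reducing $\widetilde{X}_{t,x}$ to the hypotheses \eqref{th11}--\eqref{th22} followed by an invocation of that theorem is exactly the intended (and standard) justification. Your remark on absorbing the $x$-independent constant via $(1+|x|^2)^{1/2}\ge 1$ is the only nontrivial bookkeeping, and you handle it correctly.
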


  We have the following main result on the convergence rate of the PRK method \eqref{prk}.
\begin{theorem}\label{thmprk}
  The explicit PRK method \eqref{prk} for the system \eqref{tq} preserves symplectic structure and has the mean-square order of 2. 
\end{theorem}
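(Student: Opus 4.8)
The plan is to establish the two claims of Theorem \ref{thmprk} separately, treating symplecticity first and then the mean-square convergence order. For the symplecticity, I would verify directly that the PRK scheme \eqref{prk}, applied to the linear partitioned system \eqref{tq}, preserves the canonical two-form $dp\wedge dq$ (or, in the matrix notation of this paper, that the propagation map has the appropriate symplectic structure). The key observation is that the scheme \eqref{prk} is built from a pair of Runge-Kutta tableaux whose coefficients satisfy the classical compatibility condition $b_i\bar a_{ij}+\bar b_j a_{ji}-b_i\bar b_j=0$ together with $b_i=\bar b_i$, which is exactly the algebraic condition guaranteeing symplecticity of a partitioned RK method; this carries over verbatim to the stochastic setting because the noise enters additively and the same algebraic cancellation applies to the stochastic increments $\Delta\mathcal B$ and $J_k$. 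So the first step is to exhibit the underlying tableaux (coefficients $\tfrac14,\tfrac34$ for the $\mathcal Q$-stages and $\tfrac23,\tfrac13$ for the $\mathcal P$-stages, with the stated nodes) and check the symplecticity relation; I would cite \cite{MRT2002} for the precise form of this condition in the stochastic PRK framework.

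For the convergence order, the strategy is to invoke Lemma \ref{orderlemma} (equivalently Theorem \ref{onesteporder} of \cite{NM 1995}): it suffices to compare the one-step map of \eqref{prk} against a reference one-step approximation $\overline X_{t,x}(t+\tau)$ that is already known to have mean-square order $2$, and to show the local discrepancy is $O(\tau^{p_1})$ in mean and $O(\tau^{p_2})$ in mean square with $p_2\ge \tfrac{5}{2}$ and $p_1\ge p_2+\tfrac12$ (so that $p_2-\tfrac12=2$). Since \eqref{tq} is a \emph{linear} SDE with additive noise, the exact solution over one step can be written in closed form via the matrix exponential of $\begin{psmallmatrix}0&A\\B&0\end{psmallmatrix}$ acting on $(p^k,q^k)^T$ plus stochastic convolution integrals against $\mathcal B$; I would Taylor-expand this exact one-step solution in powers of $\tau$, keeping the stochastic integrals $\Delta\mathcal B$ and $J_k=\tfrac1\tau\int_{t_k}^{t_{k+1}}(\mathcal B_s-\mathcal B_{t_k})\,ds$ as the relevant iterated-integral data up to the required order. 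Then I would expand the PRK stages $\mathcal Q_1,\mathcal P_1,\mathcal Q_2,\mathcal P_2$ and the updates $p^{k+1},q^{k+1}$ in the same way, substituting the nested definitions, and match the two expansions term by term.

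The main obstacle — and the heart of the proof — is the bookkeeping in this local expansion: one must check that the PRK coefficients and the particular choices of the noise-weighting constants ($\tfrac1{\sqrt2}$, $\tfrac1{2\sqrt3}$, $-\tfrac1{3\sqrt2}$, $-\tfrac1{\sqrt3}$ multiplying $\Delta\mathcal B$ in the stages) are precisely tuned so that the deterministic part is reproduced to $O(\tau^3)$ and the mixed deterministic-stochastic terms (those of the form $\tau^{3/2}$ involving products of $A$, $B$ with $\Delta\mathcal B$ or $J_k$) also cancel, while the purely stochastic additive terms are reproduced exactly since they are linear in the increments. Because the noise is additive and the drift is linear, there are no genuinely higher iterated It\^o integrals to worry about beyond $\Delta\mathcal B$ and $J_k$, which keeps the computation finite; the delicate point is simply confirming that the constants above were designed to kill the $O(\tau^{3/2})$ and $O(\tau^2)$ local errors in both the mean and mean-square senses, yielding $p_1=3$, $p_2=\tfrac52$, hence global mean-square order $2$ by Lemma \ref{orderlemma}. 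I would then note that this matches the deterministic order of the PRK pair and cite \cite{MRT2002} for the template, keeping the verification of the moment conditions \eqref{lem11}--\eqref{lem22} as the one explicit calculation to carry out.
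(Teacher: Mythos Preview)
Your proposal is correct and follows essentially the same strategy as the paper: for symplecticity, invoke the PRK compatibility condition from \cite{MRT2002}; for convergence, expand the one-step map, compare to a reference of known mean-square order~$2$, and apply Lemma~\ref{orderlemma} with $p_1=3$, $p_2=\tfrac52$.

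The only noteworthy difference is in the choice of reference. You propose to Taylor-expand the exact solution (via the matrix exponential) and match the PRK expansion against it term by term; the paper instead takes as reference the explicit order-$2$ one-step scheme
\[
\overline{p}^{\,k+1}=p^k+\tau Aq^k+\tfrac12\tau^2 ABp^k+\tau ANJ_k+L\Delta\mathcal B,
\]
cited from \cite{PK 1999}, and simply subtracts. This shortcut avoids the bookkeeping you flag as the ``main obstacle'': after substitution one finds
\[
p^{k+1}-\overline{p}^{\,k+1}=\tfrac12\tau^2 ABL\Bigl(J_k+\tfrac{1}{2\sqrt3}\Delta\mathcal B\Bigr)+O(\tau^3),
\]
from which $|\mathbb E(\cdot)|=O(\tau^3)$ and $(\mathbb E|\cdot|^2)^{1/2}=O(\tau^{5/2})$ follow immediately using $\mathbb E J_k=\mathbb E\Delta\mathcal B=0$ and $\mathbb E J_k^2,\,\mathbb E(\Delta\mathcal B)^2=O(\tau)$. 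Your route would arrive at the same place but requires you to separately verify that the PRK reproduces the $\tau ANJ_k$ term with the correct coefficient and that the $\tau AN\Delta\mathcal B$ contributions from $\mathcal Q_1,\mathcal Q_2$ cancel (which they do, via $\tfrac14\cdot\tfrac{1}{\sqrt2}+\tfrac34\cdot(-\tfrac{1}{3\sqrt2})=0$); the paper's reference absorbs these checks into the citation. For symplecticity the paper likewise just cites \cite[Lemma~4.2]{MRT2002} rather than writing out the tableau condition.
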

\begin{proof}
The preservation of symplectic structure is shown in \cite[Lemma 4.2]{MRT2002}, here we only prove the correct convergence order.
According to \cite[Section 10.5]{PK 1999}, the following one-step approximation for the system \eqref{tq} has the mean-square order $2$:
  \begin{eqnarray}
    \overline{p}^{k+1}=p^k+\tau A{q}^k+\frac12 
    \tau^2AB{p}^k+\tau AN J_k+L\Delta\mathcal{B},\label{2nd1}\\
    \overline{q}^{k+1}={q}^k+\tau Bp^k+\frac12 \tau^2BA q^k+\tau BL J_k+N\Delta\mathcal{B},\label{znd2}
  \end{eqnarray}
which will be used as a reference method. 

  The PRK method \eqref{prk} can be rewritten as
  \[p^{k+1}=p^k+\tau Aq^k+\frac12\tau^2 ABp^k+\tau AN J_k+L\Delta\mathcal{B}+\frac12\tau^2 ABL\Big(J_k+\frac{1}{2\sqrt3}\Delta \mathcal{B}\Big)
  +O(\tau^3).\]
  Notice that 
  \[\mathbb{E}J_k=\mathbb{E}{\Delta \mathcal{B}}=0,\qquad \mathbb{E}{(J_k)^2}=\frac{\tau}{3},\qquad \mathbb{E}(\Delta \mathcal{B})^2=\tau,\]
which, by Young's inequality, leads to 
  \[\mathbb{E}\Big(J_k+\frac{1}{2\sqrt3}\Delta \mathcal{B}\Big)^2\le C\mathbb{E}(J_k)^2+C\mathbb{E}(\Delta \mathcal{B})^2=O(\tau).\]  
  Therefore, we have 
  \[\Bigg|\mathbb{E}\Big(p^{k+1}-\overline{p}^{k+1}\Big)\Bigg|=O(\tau^3),\]
  \begin{eqnarray*}
    \mathbb{E}\Big(p^{k+1}-\overline{p}^{k+1}\Big)^2=\mathbb{E}\Big(\frac12\tau^2 AB\Big(J_k+\frac{1}{2\sqrt3}\Delta \mathcal{B}\Big)
  +O(\tau^3)\Big)^2=O(\tau^5),
  \end{eqnarray*}
  which leads to 
  \[\Bigg(\mathbb{E}\Big(p^{k+1}-\overline{p}^{k+1}\Big)^2\Bigg)^{1/2}=O(\tau^{5/2}).\]
  Following a similar approach, we also have 
  \[\Bigg|\mathbb{E}\Big(q^{k+1}-\overline{q}^{k+1}\Big)\Bigg|=O(\tau^3), \qquad
 \Bigg(\mathbb{E}\Big(q^{k+1}-\overline{q}^{k+1}\Big)^2\Bigg)^{1/2}=O(\tau^{5/2}).\]
Therefore, by applying the result of Lemma \ref{orderlemma}, the PRK method \eqref{prk} has mean-square order of $2$.
\end{proof}

\begin{remark}
The PRK scheme \eqref{prk} was shown in \cite{MRT2002} to have the mean-square order of 3/2 for general system. For this linear system \eqref{tq}, it can be shown to have second order mean-square convergence rate.
\end{remark}

\section{Numerical experiments}\label{numerical}
\setcounter{equation}{0} \setcounter{figure}{0}\setcounter{table}{0}

The numerical results of the DG scheme is presented in this section to demonstrate the performance of the proposed methods. 
We consider the DG method with various polynomial degree $k$ as the spatial discretization, and utilize the PRK method \eqref{prk} for temporal discretization.  The accuracy tests are provided for both 1D case and 2D case to demonstrate the convergence rate of the methods, and the linear growth of discrete energy is also studied for both cases.

\subsection{Accuracy test}
For the one-dimensional system \eqref{1} with periodic boundary conditions, we set $\lambda_1=\lambda_2=1$, and choose $W_t=\mathcal{B}_t$, which is the standard Brownian motion, so that one exact solution to \eqref{1} takes the form
  \begin{equation}\label{exact}
  \begin{cases}
\eta=\sin(x-t)+\cos(x+t)-\mathcal{B}_t&\\
u=\sin(x-t)-\cos(x+t)+\mathcal{B}_t.
\end{cases}
\end{equation} 

The computational domain is $[0,2\pi]$ and the final time is set to $T=3$. Initial conditions for $\eta(x,0)$ and $u(x,0)$ are 
obtained by letting $t=0$ in \eqref{exact}.  We use Nx and Nt to denote number of spatial steps and temporal steps respectively.  
Table \ref{t11} and Table \ref{t12} demonstrates the order of convergence rate for the case $k=1$ and $k=2$ respectively, where $\Delta t$ is chosen to be small enough to ensure that the spatial error dominates. Under both cases, we can observe the optimal convergence orders, i.e., $(k+1)$-th order of accuracy, which is consistent with the result in Theorem \ref{error estimate} for the semi-discrete method. Note that the second order temporal discretization is used for both cases, therefore, one would expect a second order convergence even coupled with third order DG spatial discretization. This second order convergence is observed in Table \ref{t13}, when larger $\Delta t$ is used.
 
\begin{table}[htb]
     \caption{Numerical error and convergence rates of 1D case when $k=1$.}
\centering
\begin{tabular}{|c|c|c|c|c|c|}
\hline 
Nx&Nt&$\norm{e_u}$&order&$\norm{e_\eta}$&order\\
\hline 
20&600&0.01295&0&0.01725&0\\
\hline
40&1200&3.201E-3&2.0162&4.281E-3&2.0106\\
\hline
80&2400&7.851E-4&2.0276&1.083E-3&1.9829\\
\hline
160&4800&1.958E-4&2.0036&2.714E-4&1.9966\\
\hline
\end{tabular}\label{t11}
\end{table}

\begin{table}[htb]
     \caption{Numerical error and convergence rates of 1D case when $k=2$.}
\centering
\begin{tabular}{|c|c|c|c|c|c|}
\hline 
Nx&Nt&$\norm{e_u}$&order&$\norm{e_\eta}$&order\\
\hline 
20&600&3.176E-4&0&4.407E-4&0\\
\hline
40&1200&3.958E-5&3.0045&5.486E-5&3.0061\\
\hline
80&2400&4.920E-6&3.0081&6.874E-6&2.9964\\
\hline
160&4800&6.144E-7&3.0013&8.645E-7&2.9912\\
\hline
\end{tabular}\label{t12}
\end{table}

   \begin{table}[htb]
     \caption{Numerical error and convergence rates of 1D case when $k=2$ with larger $\Delta t$.}
\centering
\begin{tabular}{|c|c|c|c|c|c|c|c|c|}
\hline 
Nx&Nt&$\norm{e_u}$&order&$\norm{e_\eta}$&order\\
\hline 
20&60&5.6019E-4&0&5.3547E-4&0\\
\hline
40&120&1.2175E-4&2.2020&9.5595E-5&2.4858\\
\hline
80&240&2.9228E-5&2.0585&2.0804E-5&2.2000\\
\hline
160&480&7.2337E-6&2.0145&4.9803E-6&2.0626\\
\hline
\end{tabular}\label{t13}
\end{table}

Next, we consider the two-dimensional stochastic Maxwell equations \eqref{2d} with periodic boundary conditions.
Set $\lambda_1=\lambda_2=1$, and choose $W_t=\mathcal{B}_t$, then the exact solution to \eqref{2d} takes the form
  \begin{equation}\label{2dexact}
  \begin{cases}
E=\sin(x+t)-\cos(y+t)-\mathcal{B}_t,\\
S=\cos(y+t)+\mathcal{B}_t,\\
T=\sin(x+t)+\mathcal{B}_t.
\end{cases}
\end{equation}
The spatial domain is set to be $[0,2\pi]^2$, and the final stopping time is taken to be $T=1$. The initial conditions of $E, ~S,~T$ can be obtained by setting $t=0$ in the exact solutions \eqref{2dexact}.  We use Nx and Nt to denote number of spatial steps and temporal steps respectively.  
The numerical errors and the corresponding convergence rates are shown in Table \ref{2dt1} for $k=1$ and in Table \ref{2dt2} for $k=2$. 
Under both cases, we can observe the optimal convergence orders, i.e., $(k+1)$-th order of accuracy, which matches 
the analysis in Theorem \ref{2derror} for the semi-discrete method. 

\begin{table}[htb]
     \caption{Numerical error and convergence rates of 2D case when $k=1$. }
\centering
\begin{tabular}{|c|c|c|c|c|c|c|c|c|}
\hline 
Nx&Ny&Nt&$\norm{e_E}$&order&$\norm{e_S}$&order&$\norm{e_T}$&order\\
\hline 
20&20&200&0.02715&0&0.01749&0&0.01749&0\\
\hline
40&40&400&6.617E-3&2.0366&4.466E-3&1.9696&4.466E-3&1.9696\\
\hline
80&80&800&1.634E-3&2.0176&1.129E-3&1.9839&1.129E-3&1.9839\\
\hline
160&160&1600&4.061E-4&2.0086&2.839E-4&1.9918&2.839E-4&1.9918\\
\hline
\end{tabular}\label{2dt1}
\end{table}

\begin{table}[htb]
     \caption{Numerical error and convergence rates of 2D case when $k=2$.}
\centering
\begin{tabular}{|c|c|c|c|c|c|c|c|c|}
\hline 
Nx&Ny&Nt&$\norm{e_E}$&order&$\norm{e_S}$&order&$\norm{e_T}$&order\\
\hline 
20&20&200&8.529E-4&0&5.899E-4&0&5.899E-4&0\\
\hline
40&40&400&9.980E-5&3.0952&7.061E-5&3.0628&7.061E-5&3.0628\\
\hline
80&80&800&1.253E-5&2.9939&9.119E-6&2.9529&9.119E-6&2.9529\\
\hline
160&160&1600&1.521E-6&3.0422&1.137E-6&3.0038&1.137E-6&3.0038\\
\hline
\end{tabular}\label{2dt2}
\end{table}

\subsection{Averaged energy growth} 
The discrete energy law satisfied by the numerical solutions $u_h$ and $\eta_h$ was studied in 
Theorem \ref{1denergy} for the one-dimensional system. Under the same setup as in the previous example, we simulate the
system up to the final stopping time $T=3$, with the time step size $\Delta t=0.0075$ and 80 space meshes. In this test, we run the simulations with $1000$ samples, and take the average to approximate the expectation and compute the averaged energy. Figure \ref{1denergyp} 
shows the time history of the averaged energy of our numerical solutions with different noise size $\lambda_{1,2}$, 
from which we can observe that the averaged energy is almost linear with respect to time. Note that when $\lambda_1=\lambda_2=0$,
 the global energy is preserved exactly on the discrete level. When the noise sizes $\lambda_{1,2}$ increase, the 
linear growth rate of the discrete energy also increases. The slopes of the lines in Figure \ref{1denergyp} (b-d) can be computed via least square fitting, and are approximately 0.1228, 3.0154, 12.4137, respectively. They are proportional to $\lambda_1^2+\lambda_2^2$, which is consistent with the result in Theorem \ref{1denergy}. 

 \begin{figure}[htb]
\centering
\subfigure[$\lambda_1=\lambda_2=0$]{\includegraphics[width=0.35\textwidth]{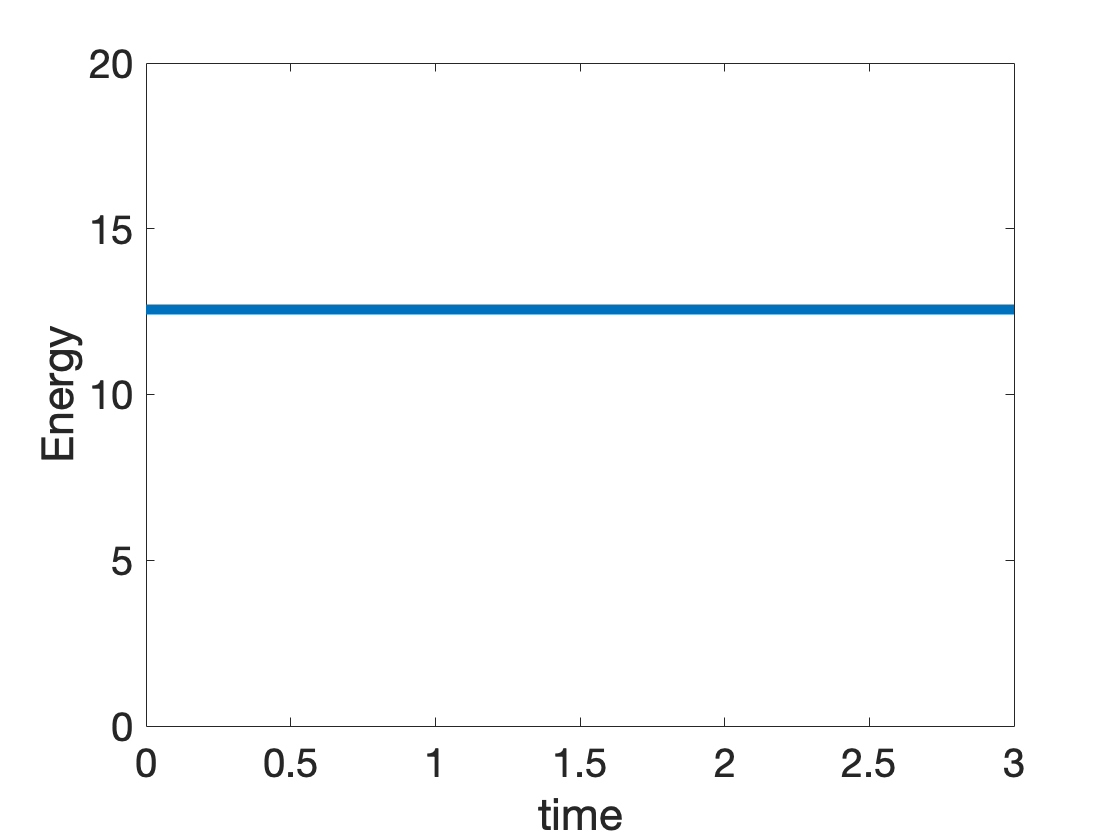}}
\subfigure[$\lambda_1=\lambda_2=0.1$]{\includegraphics[width=0.35\textwidth]{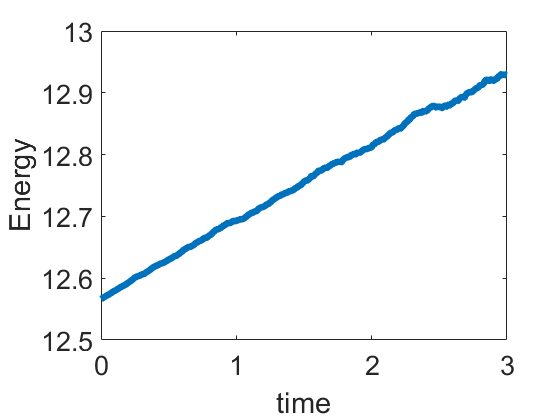}}
\subfigure[$\lambda_1=\lambda_2=0.5$]{\includegraphics[width=0.35\textwidth]{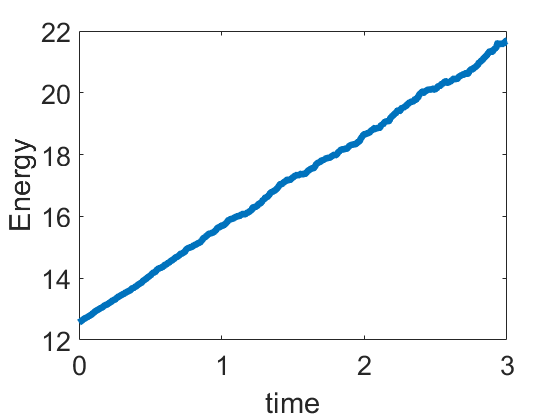}}
\subfigure[$\lambda_1=\lambda_2=1$]{\includegraphics[width=0.35\textwidth]{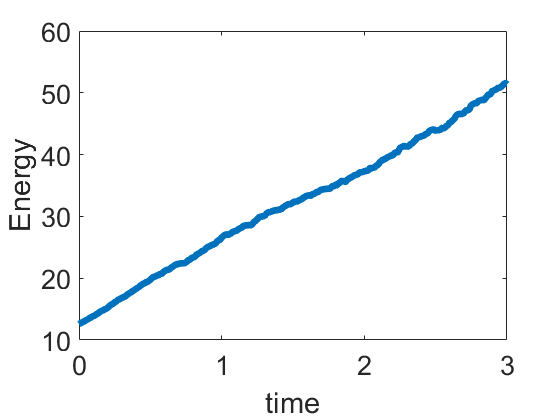}}
\caption{Averaged energy with different sizes of noise for 1D case. }
\label{1denergyp}
\end{figure}

Similarly, Theorem \ref{2denergy} studied the discrete energy law satisfied by the numerical solutions $E_h$, $S_h$ and $T_h$ for the two-dimensional system. 
 For this example we simulate the
system up to the final stopping time $T=1$, with the time step size $\Delta t=0.0025$ and $80\times 80$ space meshes. 
In this test, we run the simulations with $500$ samples, and take the average to approximate the expectation and compute the averaged energy. Figure \ref{2denergyp} 
shows the time history of the averaged energy of our numerical solutions with different noise size $\lambda_{1,2}$, 
from which we can observe that the averaged energy is almost linear with respect to time. Note that when $\lambda_1=\lambda_2=0$,
 the global energy is preserved exactly on the discrete level. When the noise sizes $\lambda_{1,2}$ increase, the 
linear growth rate of the discrete energy also increases. 
 The slopes of the lines in Figure \ref{2denergyp} (b-d) can be computed via least square fitting, and are approximately 1.2420, 29.9783, 117.2386, respectively. They are proportional to $\lambda_1^2+\lambda_2^2$, which is consistent with the result in Theorem \ref{2denergy}. 
 
 \begin{figure}[htb]
\centering
\subfigure[$\lambda_1=\lambda_2=0$]{\includegraphics[width=0.35\textwidth]{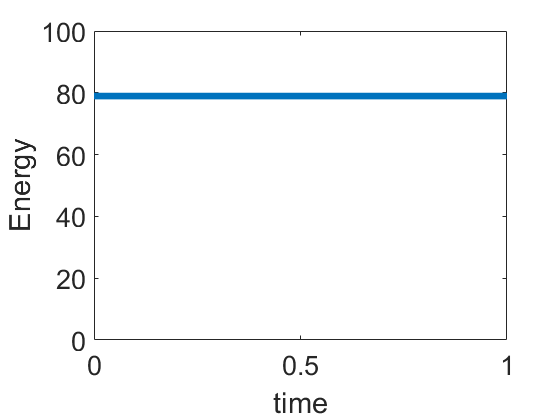}}
\subfigure[$\lambda_1=\lambda_2=0.1$]{\includegraphics[width=0.35\textwidth]{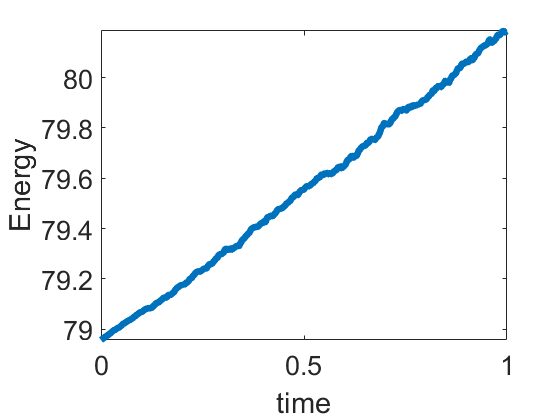}}
\subfigure[$\lambda_1=\lambda_2=0.5$]{\includegraphics[width=0.35\textwidth]{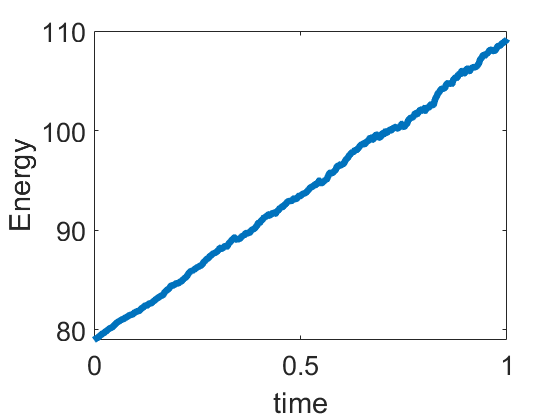}}
\subfigure[$\lambda_1=\lambda_2=1$]{\includegraphics[width=0.35\textwidth]{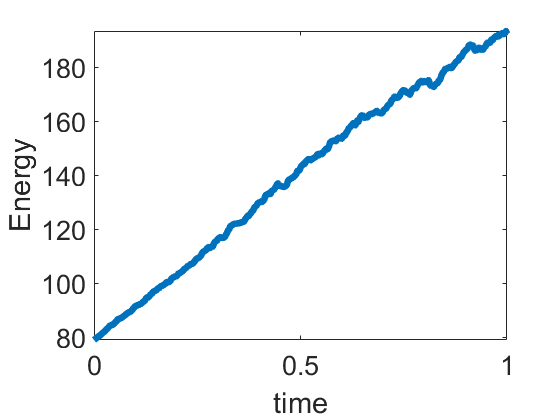}}
\caption{Averaged energy with different sizes of noise for 2D case.}
\label{2denergyp}
\end{figure}

\subsection{Test with noises of various sizes}\label{sec4.3}

In this example, we consider the two-dimensional system \eqref{2d}, and use the initial conditions studied in \cite{CHZ2016}:
\begin{eqnarray*}
  \begin{cases}
      E(x,y,0)=\sin(3\pi x)\sin(4\pi y),\\
  S(x,y,0)=-\frac45\cos(3\pi x)\cos(4\pi y),\\
  T(x,y,0)=-\frac35\sin(3\pi x)\cos(4\pi y),
  \end{cases}
\end{eqnarray*}
with $I\times J=[0,\frac23]\times[0,\frac12]$. The final stopping time is set as $T=1$. 
Following the definition \eqref{defW}, we construct the Wiener process as 
\begin{eqnarray}\label{constW}
  W_t=\sum^\infty_{m,n=1}2\sqrt{\frac{3}{m^3+n^3}}\sin\Big(\frac32m\pi x\Big)\sin\Big(2n\pi y\Big)\mathcal{B}_m(t),
\end{eqnarray}
and truncate the sum \eqref{constW} by taking the sum over $m,n$ from $1$ to $50$. 

In this example, the mesh sizes $\Delta x=\Delta y=0.0083$ and time step size $\Delta t=0.00083$ are considered. 
In order to show the effect of noise with various sizes on the numerical solution, we run the simulations with six sets of parameters: $\lambda_{1,2}=0.1, ~0.2, ~0.3, ~0.5, ~0.7,~1$. The contour plots of the numerical solution $S_h$ with these choices of noises are shown in Figure \ref{levelS}. The 3D plots of $T_h$ are also provided in Figure \ref{3dT}. It can be observed from these figures that the perturbation of the numerical solutions becomes more and more obvious as the size of the noise grows. 

\begin{figure}[htb]
\centering
\subfigure[$\lambda_1=\lambda_2=0.1$]{\includegraphics[width=0.45\textwidth]{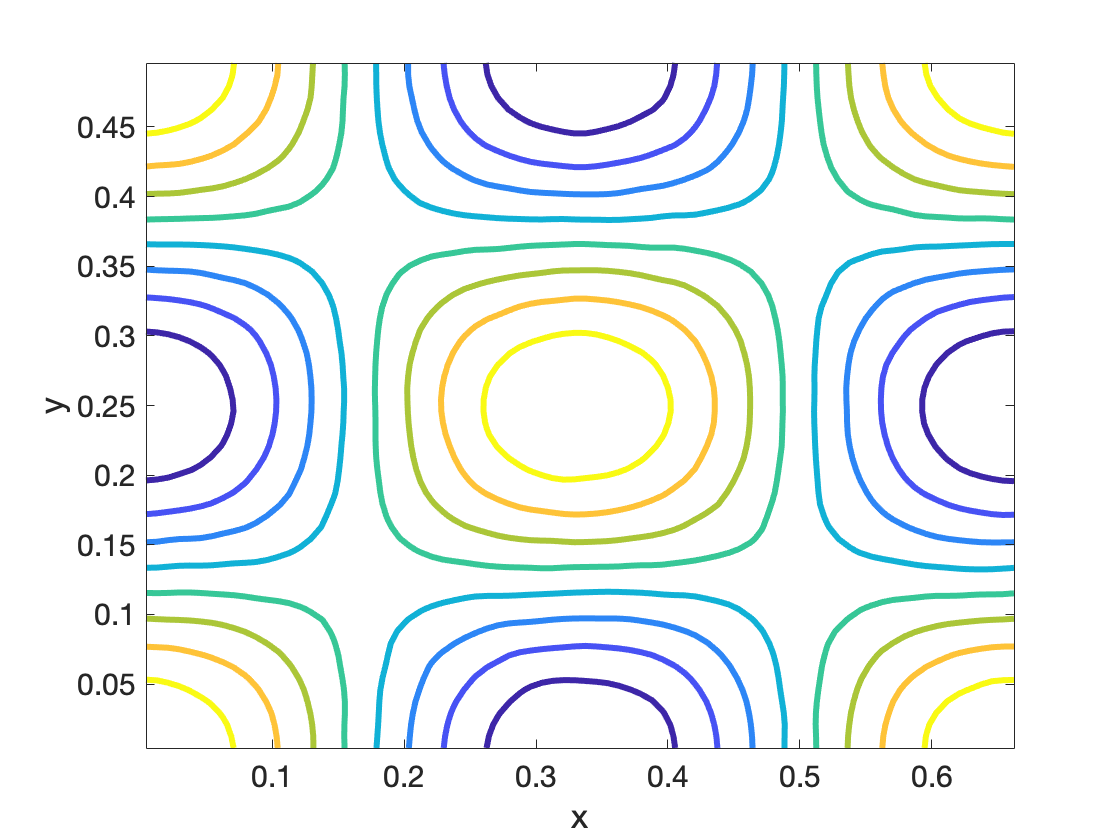}}
\hspace{-1cm}
\subfigure[$\lambda_1=\lambda_2=0.2$]{\includegraphics[width=0.45\textwidth]{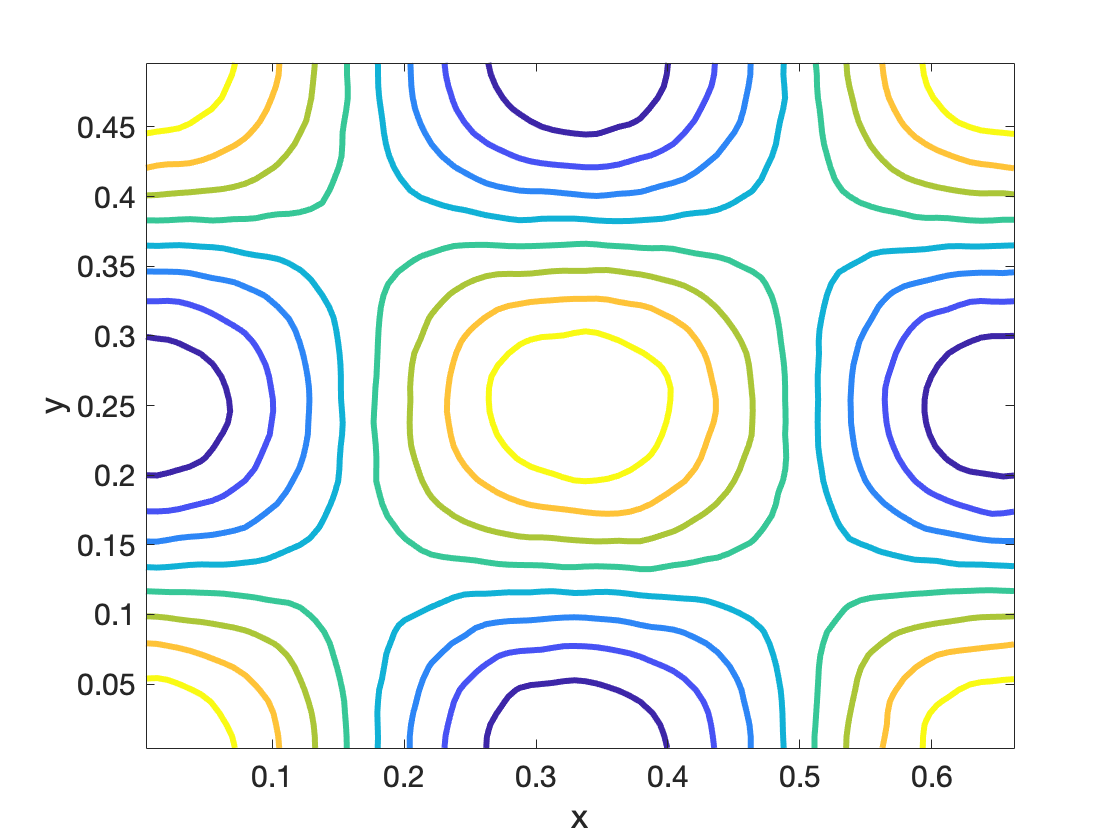}} \\
\subfigure[$\lambda_1=\lambda_2=0.3$]{\includegraphics[width=0.45\textwidth]{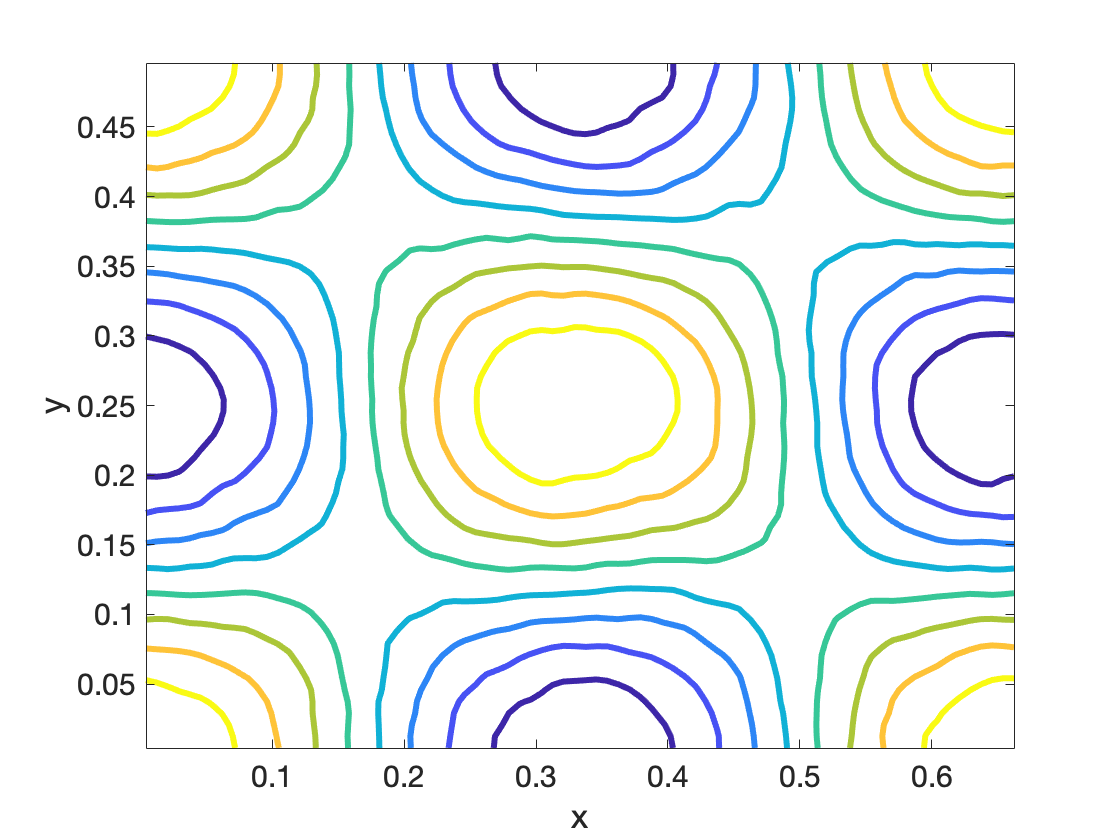}}
\hspace{-1cm}
\subfigure[$\lambda_1=\lambda_2=0.5$]{\includegraphics[width=0.45\textwidth]{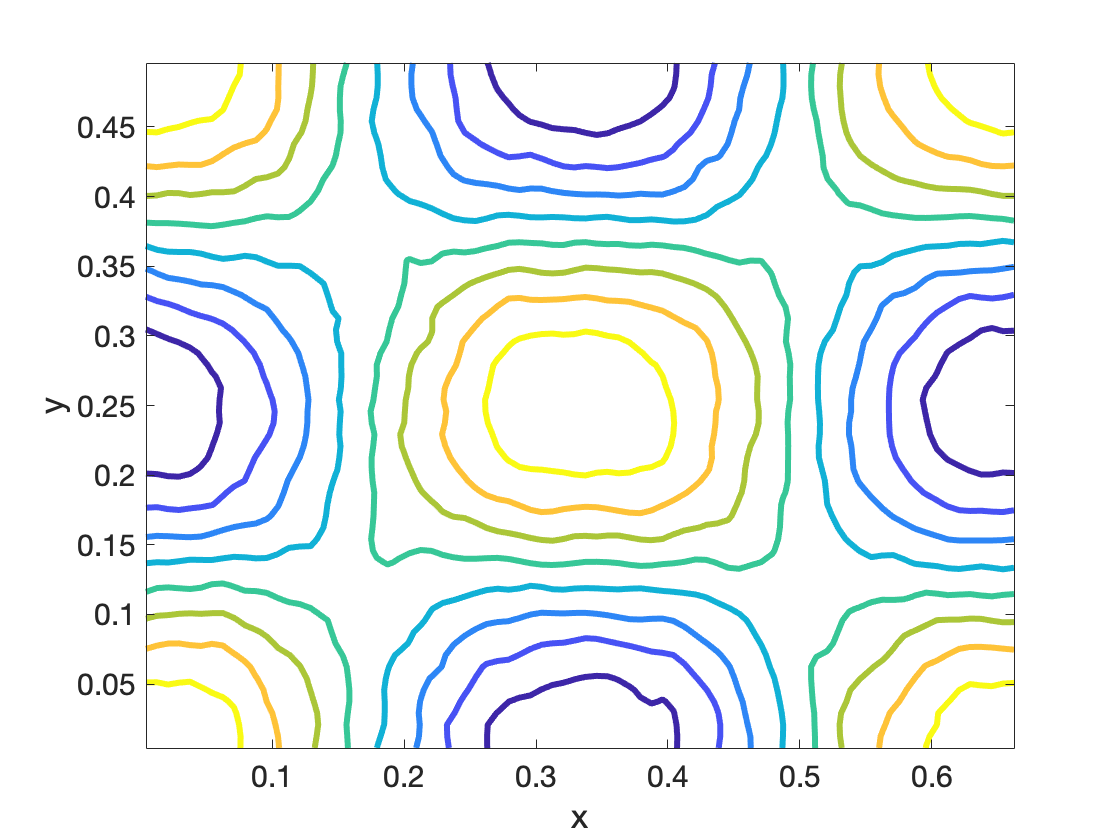}} \\
\subfigure[$\lambda_1=\lambda_2=0.7$]{\includegraphics[width=0.45\textwidth]{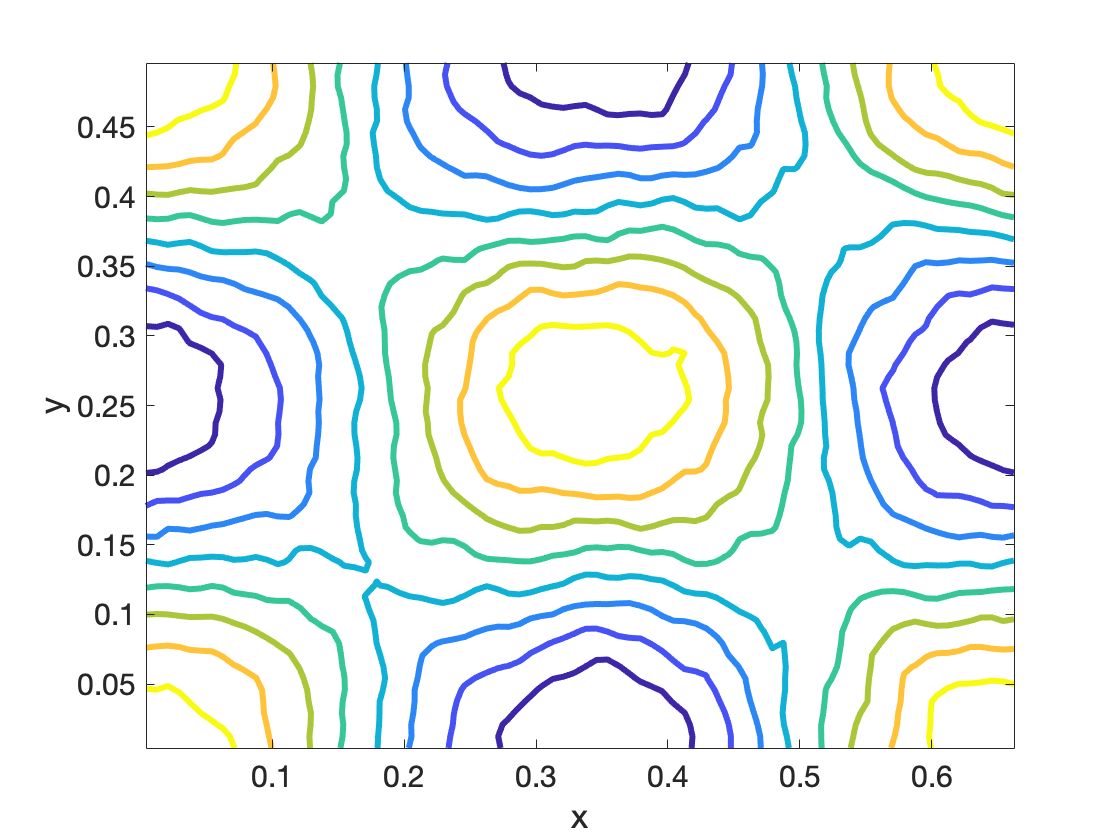}}
\hspace{-1cm}
\subfigure[$\lambda_1=\lambda_2=1$]{\includegraphics[width=0.45\textwidth]{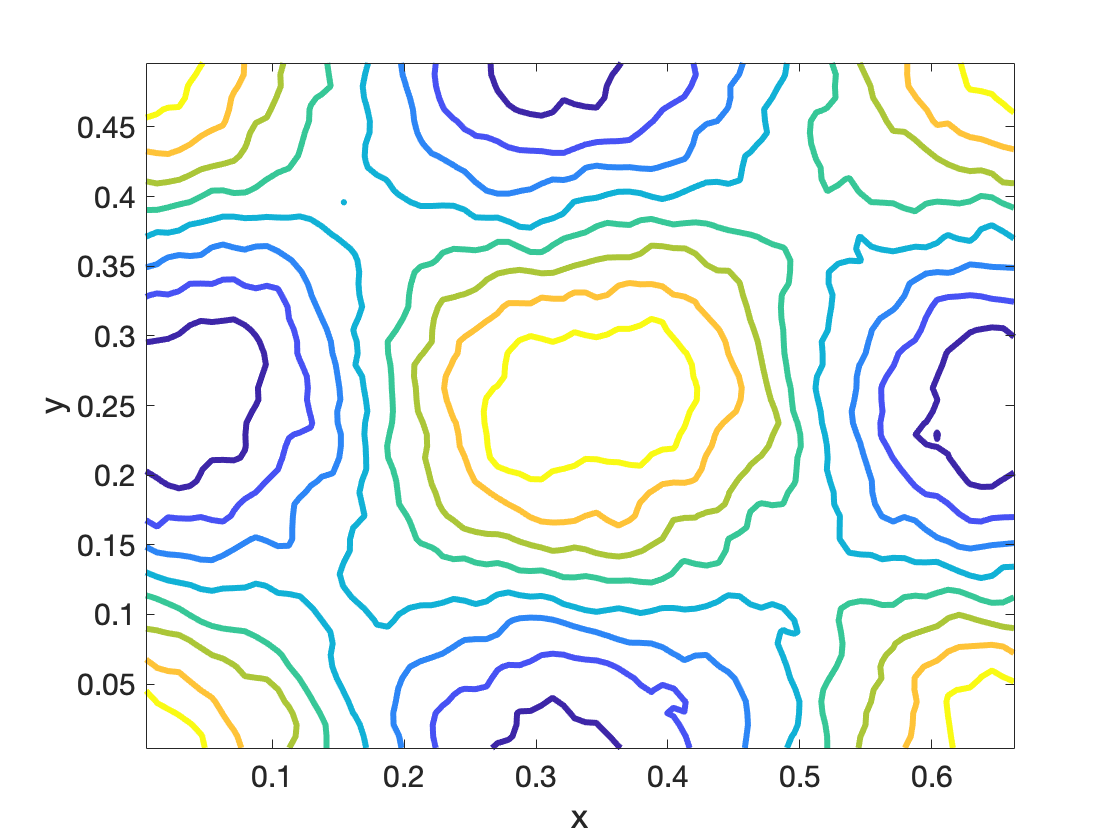}}
\caption{Contour plots of $S_h(x,y)$ with different sizes of noise for the test in Section \ref{sec4.3}. $8$ uniformly spaced contour lines  within the range $[-0.7, 0.7]$ are used. }
 \label{levelS}
\end{figure}

\begin{figure}[htb]
\centering
\subfigure[$\lambda_1=\lambda_2=0.2$]{\includegraphics[width=0.45\textwidth]{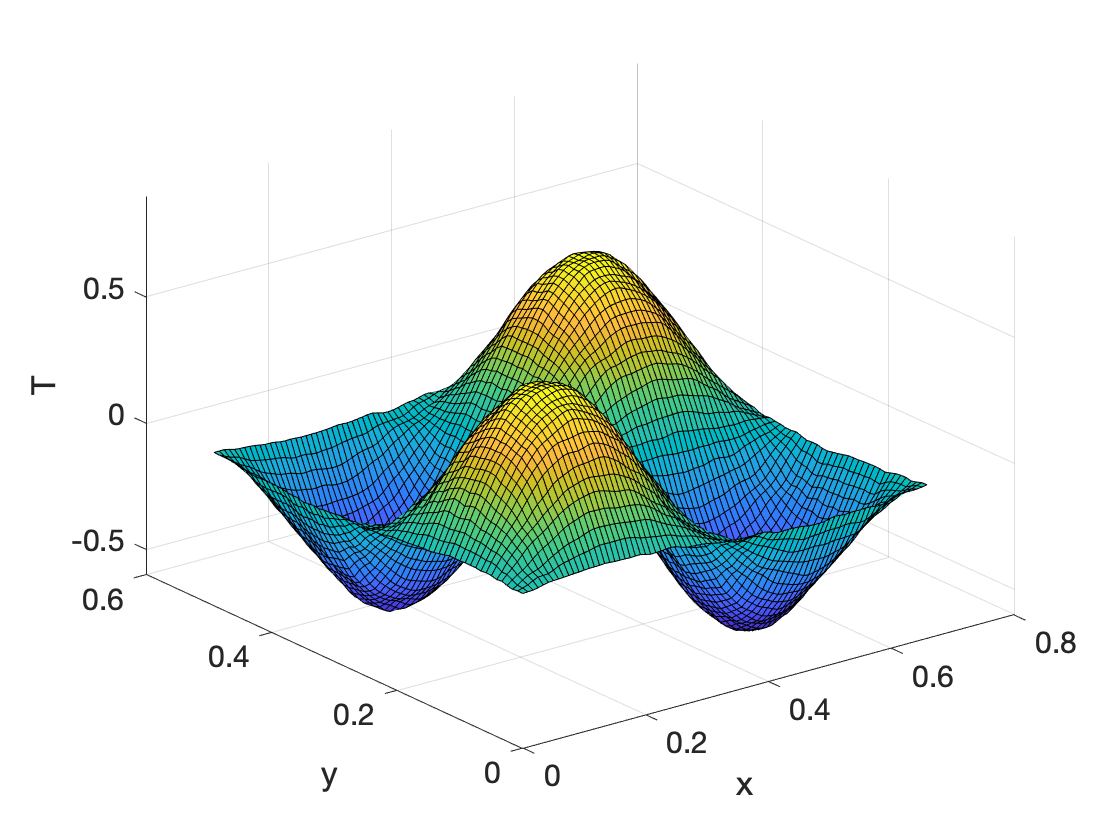}}
\subfigure[$\lambda_1=\lambda_2=0.5$]{\includegraphics[width=0.45\textwidth]{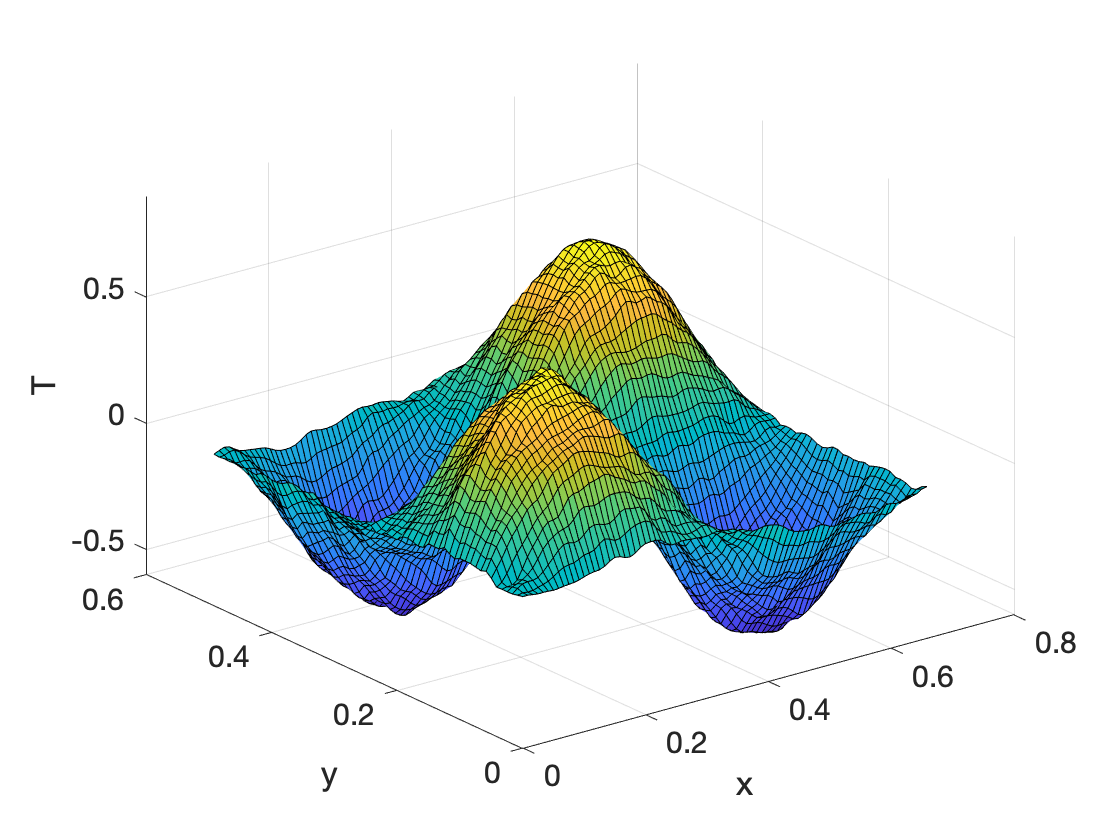}}\\
\subfigure[$\lambda_1=\lambda_2=0.7$]{\includegraphics[width=0.45\textwidth]{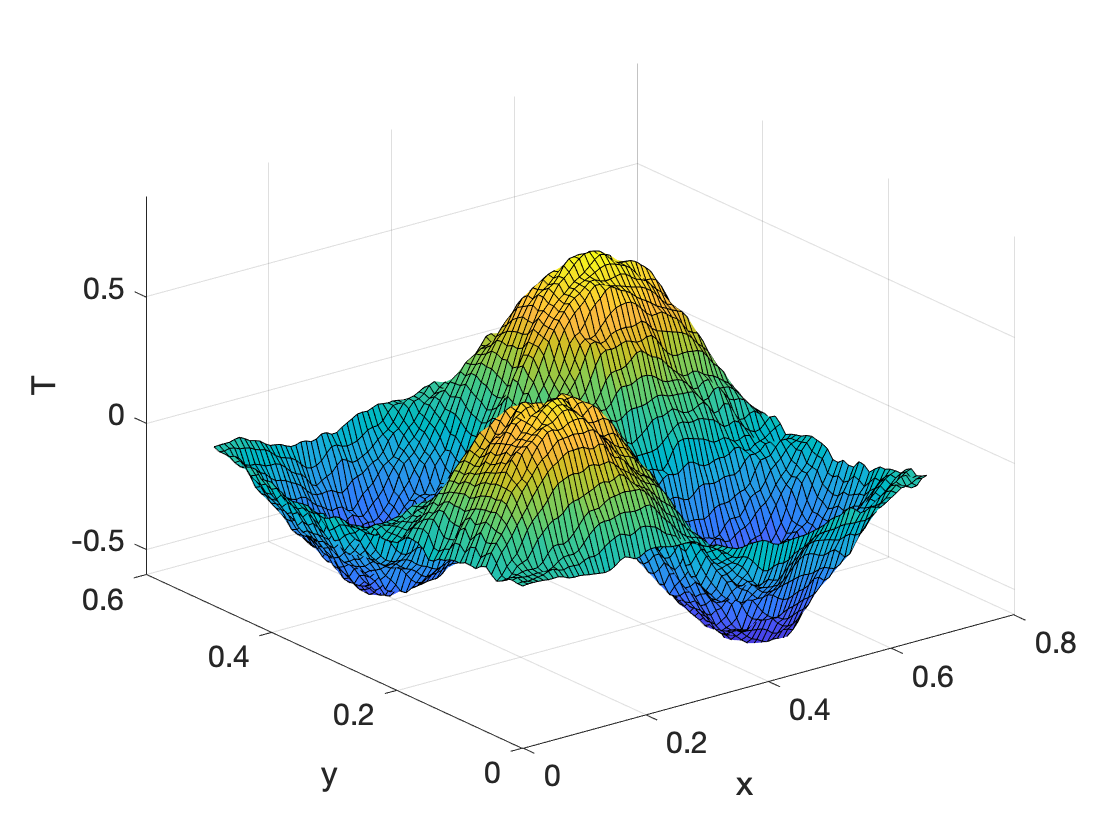}}
\subfigure[$\lambda_1=\lambda_2=1$]{\includegraphics[width=0.45\textwidth]{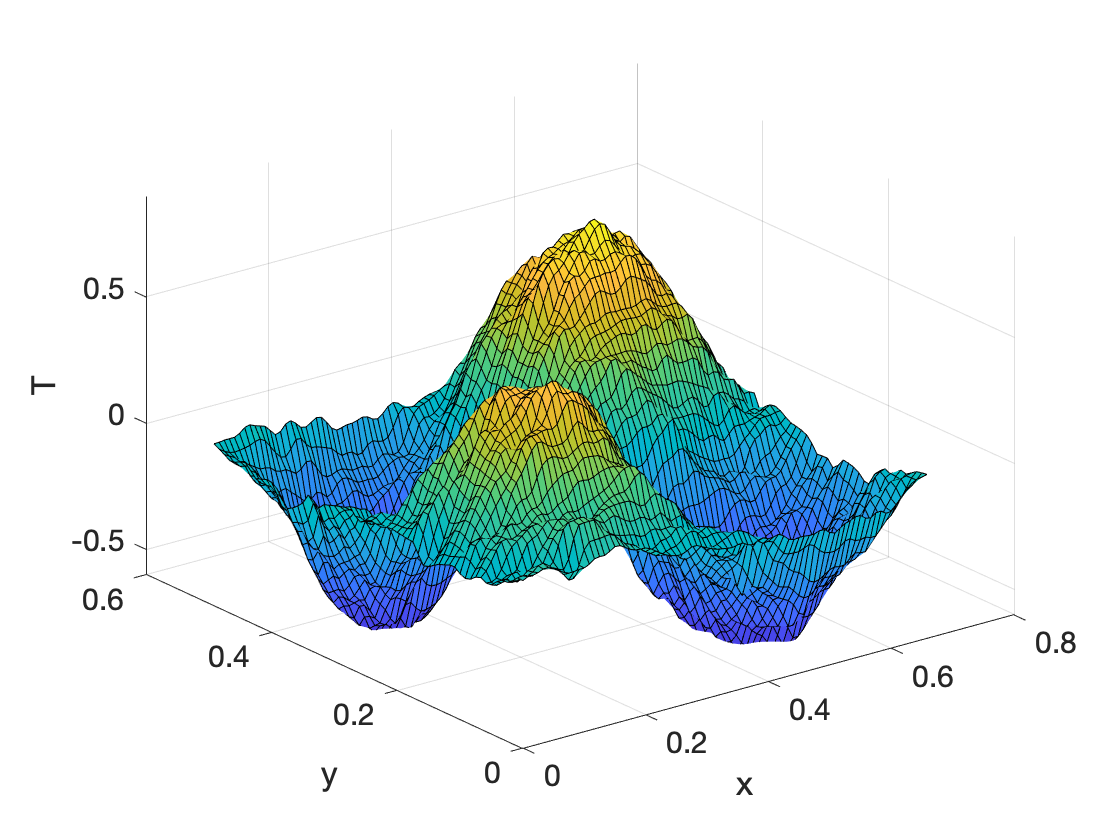}}
\caption{3D plots of $T_h(x,y)$ with different sizes of noise for the test in Section \ref{sec4.3}. } 
 \label{3dT}
\end{figure}

\section{Conclusion remark}\label{conclusion}
\setcounter{equation}{0} \setcounter{figure}{0}\setcounter{table}{0}

In this paper we have developed and analyzed the DG scheme for the one- and two-dimensional 
stochastic Maxwell equations with additive noise. The proposed methods are shown to satisfy the discrete form of the stochastic 
energy linear growth property. The optimal error estimate of the semi-discrete methods is also proven analytically. 
By introducing auxiliary variables, we also rewrite the stochastic Maxwell equations into the multi-symplectic structure and demonstrate that the proposed DG methods preserve the multi-symplectic structure. The semi-discrete methods are then combined with the symplectic Euler or PRK temporal discretization methods. Numerical experiments are provided to test the performance of the resulting methods, and optimal error estimates and linear growth of the discrete energy can be observed for all cases.

\end{document}